\newcommand{\bel}[1]{\begin{equation}\label{#1}}
\newcommand{\be}{\begin{equation}}
\newcommand{\ba}{\begin{eqnarray}}
\newcommand{\ea}{\end{eqnarray}}
\newcommand{\qe}{\end{equation}}
\newcommand{\N}{{\mathbb N}}
\newcommand{\Z}{{\mathbb Z}}
\newcommand{\R}{{\mathbb R}}
\newcommand{\om}{\Omega}
\newcommand{\m}{\mathcal{M}}
\newcommand{\g}{\mathcal{G}}
\newcommand{\su}{\subset}
\newcommand{\dm}{{\delta\m}}
\newcommand{\de}{{\delta}}
\newcommand{\pa}{\partial}
\newcommand{\na}{\nabla}
\newcommand{\Hmm}[1]{\leavevmode{\marginpar{\tiny%
$\hbox to 0mm{\hspace*{-0.5mm}$\leftarrow$\hss}%
\vcenter{\vrule depth 0.1mm height 0.1mm width \the\marginparwidth}%
\hbox to
0mm{\hss$\rightarrow$\hspace*{-0.5mm}}$\\\relax\raggedright #1}}}
\newtheorem{theorem}{Theorem}[section]
\newtheorem{lemma}[theorem]{Lemma}
\newtheorem{corollary}[theorem]{Corollary}
\newtheorem{definition}[theorem]{Definition}
\newtheorem{remark}[theorem]{Remark}
\newtheorem{prop}[theorem]{Proposition}
\newtheorem{problem}[theorem]{Problem}
\newtheorem{example}[theorem]{Example}
\newtheorem{claim}[theorem]{Claim}
\newcommand{\tm}{\begin{theorem}}
\newcommand{\tmd}{\end{theorem}}
\newcommand{\co}{\begin{corollary}}
\newcommand{\cod}{\end{corollary}}
\newcommand{\prp}{\begin{prop}}
\newcommand{\prpd}{\end{prop}}
\newcommand{\pf}{\begin{proof}}
\newcommand{\pfd}{\end{proof}}
\newcommand{\rmk}{\begin{remark}}
\newcommand{\rmkd}{\end{remark}}
\newcommand{\ex}{\begin{example}}
\newcommand{\exd}{\end{example}}
\newcommand{\pr}{\begin{problem}}
\newcommand{\prd}{\end{problem}}
\newcommand{\df}{\begin{definition}}
\newcommand{\dfd}{\end{definition}}
\newcommand{\lm}{\begin{lemma}}
\newcommand{\lmd}{\end{lemma}}
\begin{document}

\title[On area-minimizing subgraphs in integer lattices]{On area-minimizing subgraphs in integer lattices}

\author{Zunwu He}
\address{Zunwu He: School of Mathematics, South China University of Technology, 510641, Guangzhou, China}
\email{hzwmath789@scut.edu.cn}

\author{Bobo Hua}
\address{
Bobo Hua: School of Mathematical Sciences, LMNS, Fudan University, Shanghai 200433, China;  Shanghai Center for Mathematical Sciences, Jiangwan Campus, Fudan University, No. 2005 Songhu Road, Shanghai 200438, China.}
\email{bobohua@fudan.edu.cn}

\begin{abstract}



{We introduce area-minimizing subgraphs in an infinite graph via the formulation of functions of bounded variations initiated by De Giorgi. We classify area-minimizing subgraphs in the two-dimensional integer lattice up to isomorphisms, and prove general geometric properties for those in high-dimensional cases.}
\end{abstract}
\maketitle

Mathematics Subject Classification 2010: 05C10, 28A75, 52C99.

\par
\maketitle

\bigskip


\section{Introduction}
Area-minimizing submanifolds in the Euclidean space $\R^n$ are important concepts in geometric measure theory. For the co-dimensional one case in $\R^n$, De Giorgi  \cite{DeGiorgi1961} initiated a formulation using functions of bounded variations, called an area-minimizing hypersurface or a boundary of a subset of least perimeter, see \cite{Giustibook}. A celebrated result is as follows.
\begin{theorem}[Simons \cite{Simons68}] Every area minimizing hypersurface in $\R^n$ for $3\leq n\leq 7$ is flat, i.e. a boundary of a half space.\end{theorem}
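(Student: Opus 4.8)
The plan is to combine the interior regularity theory for sets of least perimeter with a sharp, dimension-dependent rigidity statement for stable minimal hypercones. Write the area-minimizing hypersurface as $\partial E$, where $E\subset\R^n$ locally minimizes perimeter. By De Giorgi's structure theorem the reduced boundary $\partial^{*}E$ is a smooth embedded hypersurface away from the closed singular set $\Sigma:=\partial E\setminus\partial^{*}E$, so the whole problem reduces to showing that $\Sigma=\emptyset$ when $3\le n\le 7$, and then that a smooth entire area-minimizer is affine. Federer's dimension-reduction argument is what makes the first reduction tractable: if $\Sigma\ne\emptyset$ for some $n$, iterated blow-ups at singular points produce a nonplanar area-minimizing hypercone in $\R^{m}$ for some $m\le n$ whose only singularity is the vertex. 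Hence it suffices to prove that every area-minimizing cone $C\subset\R^{n}$ with $n\le 7$ is a hyperplane.

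For such a $C$, the punctured cone $C\setminus\{0\}$ is a smooth, two-sided, \emph{stable} minimal hypersurface, so its second fundamental form $A$ satisfies the stability inequality $\int|A|^{2}\varphi^{2}\le\int|\nabla\varphi|^{2}$ for all Lipschitz $\varphi$ with compact support in $C\setminus\{0\}$. The analytic heart of the argument is Simons' Bochner-type identity on a minimal hypersurface of flat space, $\tfrac12\Delta|A|^{2}=|\nabla A|^{2}-|A|^{4}$, combined with a refined Kato inequality $|\nabla A|^{2}\ge(1+\varepsilon_{n})|\nabla|A||^{2}$ with $\varepsilon_{n}>0$ explicit. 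Multiplying the identity by a suitable cutoff that is logarithmic near the vertex and near infinity, integrating by parts, and feeding the result into the stability inequality, one obtains a weighted integral inequality whose coefficients have the favorable sign precisely when $n\le 7$; this forces $A\equiv 0$ on $C\setminus\{0\}$, i.e. $C$ is a hyperplane. The threshold is sharp: in $\R^{8}$ the Simons cone $\{x\in\R^{4}\times\R^{4}:|x_{1}|=|x_{2}|\}$, the cone over $\mathbb{S}^{3}\times\mathbb{S}^{3}\subset\mathbb{S}^{7}$, is a singular area-minimizing hypersurface, so the restriction $n\le 7$ cannot be dropped.

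Granting the cone statement, Federer's reduction gives $\Sigma=\emptyset$ for $3\le n\le 7$, so $M:=\partial E$ is a smooth entire area-minimizing hypersurface. To conclude it is a hyperplane, pass to a tangent cone at infinity: by the monotonicity formula the area density ratio of $M$ in balls $B_{r}$ is nondecreasing in $r$, and rescaling $M$ by $r\to\infty$ produces, along a subsequence, an area-minimizing cone, which by the previous step is a multiplicity-one hyperplane; hence the density of $M$ at infinity equals $1$. Since $M$ is smooth, its density also equals $1$ at every one of its points, so monotonicity forces the density ratio to be identically $1$; the equality case of the monotonicity formula then forces $M$ to be a cone with vertex at each of its points, and a smooth cone is an affine hyperplane, hence the boundary of a half-space.

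The main obstacle is the second step: extracting, from the sharp choice of test functions in the stability inequality together with the exact dimensional algebra of Simons' identity and the Kato inequality, the fact that no nonplanar stable minimal hypercone exists in $\R^{n}$ for $n\le 7$. Everything else is either classical regularity theory in the style of De Giorgi and Federer, or a standard blow-up and monotonicity argument.
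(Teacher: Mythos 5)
The paper does not prove this statement at all: it is quoted verbatim from Simons \cite{Simons68} as classical background motivating the discrete theory, so there is no in-paper argument to compare against. Judged on its own, your outline is the standard and correct route to the result: De Giorgi's structure theorem plus Federer dimension reduction to reduce everything to area-minimizing hypercones smooth away from the vertex; Simons' identity, the refined Kato inequality, and the stability inequality with logarithmic cutoffs to rule out nonplanar stable minimal cones in $\R^n$ for $n\le 7$; and finally the blow-down/monotonicity argument (tangent cone at infinity is a multiplicity-one hyperplane, density ratio identically $1$, equality in monotonicity forces $M$ to be a cone about each of its points, hence affine). You also correctly flag the sharpness via the Simons cone in $\R^8$. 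The one caveat is that this is a proof \emph{strategy} rather than a proof: the entire analytic content lives in the cone-rigidity step, where the precise constants in the Kato refinement and the choice of radial test functions must conspire to give the dimensional threshold $n\le 7$, and you acknowledge rather than execute that computation. As an account of how the cited theorem is actually established in the literature, the proposal is accurate and complete in structure.
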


The following equation is called the minimal surface equation
\begin{equation}
{\rm div}\left(\frac{\nabla u}{\sqrt{1+|\nabla u|^2}}\right)=0,\quad \R^k.\end{equation}
Bernstein first proved that any entire solution of the minimal surface equation on $\R^2$ is affine, see \cite{Bernstein27}. Such a statement on the triviality of entire solutions is now called the Bernstein theorem.
By the observations of Fleming \cite{Fleming62} and De Giorgi \cite{DeGiorgiBern}, the Bernstein theorem of the minimal surface equation is reduced to the classification of area-minimizing hypersurfaces or area-minimizing cones in the Euclidean space. This leads to the following Bernstein theorem: any entire solution of the minimal surface equation on $\R^k$ is affine if and only if $k \leq 7,$ for which the sharpness follows from the construction of a non-affine solution on $\R^8$ by Bombieri, De Giorgi, and Giusti \cite{BGG69}.

We recall the basic setting in $\R^n.$ A function $f$ is called of locally bounded variations in $\R^n$ if $f\in L^1_{loc}(\R^n)$ and whose distributional derivative is a Radon measure in $\R^n.$
We denote by $BV_{loc}(\R^n)$ the space of functions of locally bounded variations in $\R^n.$  A Lebesgue measurable set $F\subset \R^n$ is called a Caccioppoli set if $1_F\in BV_{loc}(\R^n),$ where $1_F$ is the indicator function on $F.$
 A Caccioppoli set $F$ is called area-minimizing in $\R^n$ if for any open set $\Omega\subset\subset \R^n,$ any Caccioppoli set $W$ with $L^n((W\Delta F)\setminus \Omega)=0,$ $$\int_{\overline{\Omega}}|\nabla 1_F|\leq \int_{\overline{\Omega}}|\nabla 1_W|,$$ where $W\Delta F:=(W\setminus F)\cup (F\setminus W)$, $L^n$ is the Lebesgue measure, and $\int |\nabla \cdot|$ is the BV seminorm. This in fact means that $\partial F$ is an area-minimizing hypersurface. This is equivalent to that for any open set $\Omega\subset\subset \R^n,$ and $g\in  BV_{loc}(\R^n)$ with $g-1_F= 0$ a.e. on $\R^n\setminus\Omega,$$$\int_{\overline{\Omega}}|\nabla 1_F|\leq \int_{\overline{\Omega}} |\nabla g|.$$ Such a function $1_F$ is called of least gradient in $\R^n,$ which has been extensively studied in the literature, see e.g. \cite{BGG69,Miranda67,Sternberg92,Juutinen05,Mazon14,Moradifam17,Jerrard18,Gorny18,Moradifam18,Fotouhi19,Zuniga19}.

Let  $\g=(V,E)$ be a simple undirected graph with the set of vertices $V$ and the set of edges $E.$ Two vertices $x,y$ are called neighbors, denoted by $x\sim y,$ if there is an edge connecting $x$ and $y,$ i.e. $\{x,y\}\in E.$ For any subset $\om\subset V$, denote by $\om^c:=V\backslash\om$ the complement of $\om$, by $\de\om:=\{x\in\om:\exists y\in\om^c,x\sim y\}$ the vertex boundary of $\om$ and by $\tau\om:=\{z\in\om^c:\exists w\in\om,z\sim w\}$ the exterior vertex boundary of $\om$. We write $\bar\om:=\om\cup\tau\om.$ Given any $A,B\su V,$ we set $$E(A,B):=\{\{x,y\}\in E:x\in A,y\in B\}.$$
We say $\pa\om:=E(\om,\om^c)$ is the edge boundary of $\om,$ and set $E_\om:=E(\om,\bar\om)$. We introduce discrete analogs of a subset of least perimeter and an area-minimizing subset.
\df\label{leastperimeter}
For a finite subset $U\subset V,$ a subset $K\su\bar U$ is called of least perimeter in $U$ if for any $\hat K\su\bar U$ with $\hat K\cap\tau U=K\cap\tau U,$ we have $$|\pa K\cap E_U|\leq |\pa \hat K\cap E_U|,$$ where $|\cdot |$ denotes the cardinality of a set.
\dfd \df\label{minimalgraph}
A proper nonempty subset $A\su V$ is called area-minimizing in $V$ if for any finite $U\su V$, $A\cap\bar U$ is of least perimeter in $U.$ In this case, we identify the subset $A$ with its induced subgraph $\g_A$ on $A,$ and call $A$ a minimal subgraph in $V$ for convenience. 
\dfd

\rmk
The subset $A\su V$ is called minimal if and only if for any finite $\om,\hat F\su V$ with $\hat F\triangle F\su\om$, then $$|\pa F\cap E_\om|\leq |\pa \hat F\cap E_\om|.$$
\rmkd

For any $x\sim y,$ we define $$\nabla_{(x,y)}f:=f(y)-f(x).$$
For finite $U\subset V,$ the 1-Dirichlet energy on $U$ is given by, for any $f\in \R^{\overline{U}}$, $$J_U(f):=\frac{1}{2}\sum_{\{x,y\}\in E_U}|\na_{(x,y)}f|.$$

For a subset $\om\subset V,$ any antisymmetric function $a$ on $E_\om,$ i.e. for any $x,y$ with $\{x,y\}\in E_\om,$ $a_{xy}=-a_{yx},$ is called \emph{a current} on $\om.$
For a function $f\in\R^{\overline{\om}},$ we call the current $a$ is \emph{a current associated with $f$ on $\om$} if
$$a_{xy}\in\mathrm{Sgn}(f(x)-f(y)),\quad \forall \ x,y\ \text{with}\ \{x,y\}\in E_{\om},$$ where
\begin{equation}\nonumber
 \mathrm{Sgn}(t):=\left\{
\begin{aligned}
1&, x>0,&\\
[-1,1]&, x=0,&\\
-1&, x<0.&
\end{aligned}
\right.
\end{equation} Let $\mathcal{C}_\om(f)$ be the set of currents associated with $f$ on $\om.$ The 1-Laplacian is defined as a set-valued mapping $\Delta_1:\R^{\overline{\om}}\rightarrow 2^{\R^\om},$
\begin{align}\label{oneLaplacian}
\Delta_1^\om f=\{g\in\R^\om:g(x)=\sum\limits_{y\sim x}a_{xy}, a\in \mathcal{C}_\om(f)\}.
\end{align}
A current $a\in\mathcal{C}_\om(f)$ is called \emph{minimal} in $\om$ if $\sum\limits_{y\sim x}a_{xy}=0$ for all $x\in \om.$
The following is well-known using the results in convex analysis \cite{Rockafellar70,RockWets98}, see e.g. Chang \cite{Chang16} and Hein-B\"uhler \cite{Hein-Buhler10}.
\begin{prop}\label{prop:subdiff} For finite $U\subset V$ and a given function $\varphi\in \R^{\tau U},$ the subdifferential of the functional $J_U$ at $f$ in $\{f\in \R^{\overline{U}}: f|_{\tau U}=\varphi\}$ is given by $\Delta_1^U f.$
\end{prop}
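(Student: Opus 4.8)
The plan is to treat $J_U$ as an everywhere-finite convex function on the finite-dimensional affine space $\mathcal{A}:=\{f\in\R^{\overline{U}}:f|_{\tau U}=\varphi\}$, to identify $\mathcal{A}$ with $\R^U$ by freezing the boundary data (writing $f=f_\varphi+h$, with $f_\varphi$ a fixed extension of $\varphi$ and $h\in\R^U$ extended by $0$ on $\tau U$), and then to read off $\partial J_U(f)$ from the standard machinery of finite-dimensional convex analysis \cite{Rockafellar70,RockWets98}: the exact sum rule for subdifferentials of everywhere-finite convex functions, the chain rule under precomposition with an affine map, and the elementary fact that $\partial|\cdot|(t)=\mathrm{Sgn}(t)$. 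Since $\Delta_1^U f$ is by definition a subset of $\R^U$, after this identification both sides of the asserted identity live in the same space.

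First I would rewrite $J_U$ as a sum over edges, $J_U(f)=\sum_{e\in E_U}\phi_e(f)$, where for each edge of $E_U$ a (harmless) orientation $e=(x,y)$ with $x\in U$ is fixed and $\phi_e(f):=|f(y)-f(x)|$; the choice of orientation is immaterial since $\phi_e$ depends only on $|f(x)-f(y)|$, and the factor $\tfrac12$ in the definition of $J_U$ accounts for each edge being met twice in the symmetric sum, so that each edge of $E_U$ contributes a single copy of $\phi_e$. Each $\phi_e$ is finite on all of $\R^{\overline{U}}$, hence finite as a function of $h\in\R^U$, so no constraint qualification is required and the sum rule gives $\partial J_U(f)=\sum_{e\in E_U}\partial\phi_e(f)$, all subdifferentials being taken in $\R^U$.

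Next I would compute $\partial\phi_e(f)$ edge by edge. For $e=(x,y)$, the linear part $h\mapsto(f_\varphi+h)(y)-(f_\varphi+h)(x)$ has constant gradient $\mathbf{1}_y-\mathbf{1}_x$ in $\R^U$, where $\mathbf{1}_z$ denotes the coordinate vector at $z$, with the convention that $\mathbf{1}_y$ is read as $0$ (and $f(y)$ as the frozen value $\varphi(y)$) when $y\in\tau U$. The chain rule together with $\partial|\cdot|(t)=\mathrm{Sgn}(t)$ then yields $\partial\phi_e(f)=\{a_{xy}\,\mathbf{1}_x+a_{yx}\,\mathbf{1}_y:\ a_{xy}\in\mathrm{Sgn}(f(x)-f(y)),\ a_{yx}=-a_{xy}\}$. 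Summing over all edges, $g\in\partial J_U(f)$ if and only if there exist numbers $a_{xy}\in\mathrm{Sgn}(f(x)-f(y))$ for $\{x,y\}\in E_U$ with $a_{yx}=-a_{xy}$ --- that is, a current $a\in\mathcal{C}_U(f)$ --- such that $g(x)=\sum_{y:\,\{x,y\}\in E_U}a_{xy}$ for every $x\in U$. Since every edge incident to a vertex $x\in U$ lies in $E_U$ (a neighbor of $x$ outside $U$ belongs to $\tau U\subseteq\overline{U}$ by definition), this reads $g(x)=\sum_{y\sim x}a_{xy}$, which is exactly the description of $\Delta_1^U f$ in \eqref{oneLaplacian}. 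Hence $\partial J_U(f)=\Delta_1^U f$.

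The whole computation is essentially bookkeeping, and I expect the only point needing real care to be the interface between the free coordinates (on $U$) and the frozen ones (on $\tau U$): $\partial J_U(f)$ genuinely lives in $\R^U$, so a boundary edge $\{x,y\}$ with $y\in\tau U$ affects $g$ only through its $x$-entry, yet the current value $a_{xy}$ on such an edge is still part of the data and must be constrained to $\mathrm{Sgn}(f(x)-\varphi(y))$; one should check that the correspondence between selections of subgradients of the summands and currents associated with $f$ on $U$ remains bijective over both interior and boundary edges. If one prefers to bypass the sum rule, the two inclusions can be verified directly instead --- ``$\supseteq$'' from convexity of $t\mapsto|t|$ applied on each edge, and ``$\subseteq$'' from the dual representation of $J_U$ as a supremum of linear functionals coming from $|t|=\sup_{|s|\le1}st$, identifying the subgradients with the maximizing currents --- but the sum-rule route should be the shortest.
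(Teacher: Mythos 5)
Your argument is correct, but note that the paper itself offers no proof of Proposition \ref{prop:subdiff}: it is stated as ``well-known using the results in convex analysis'' with citations to Rockafellar, Chang, and Hein--B\"uhler. What you have written is precisely the standard argument those references supply, so there is nothing to compare against; rather, you have filled in the omitted details. The three ingredients you invoke are exactly the right ones and are all unproblematic here: the exact sum rule applies because each edge summand is finite and convex on all of $\R^U$ (so no constraint qualification is needed in finite dimensions); the affine chain rule gives $\partial\phi_e(f)=\{a_{xy}\mathbf{1}_x+a_{yx}\mathbf{1}_y\}$ with $a_{xy}\in\mathrm{Sgn}(f(x)-f(y))$ and $a_{yx}=-a_{xy}$, matching the paper's sign convention for currents; and since every neighbor of a vertex of $U$ lies in $\overline{U}$, each vertex $x\in U$ collects $\sum_{y\sim x}a_{xy}$, which is the definition of $\Delta_1^U f$ in \eqref{oneLaplacian}. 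Your reading of the factor $\tfrac12$ (as compensating for the symmetric double count over orientations, so each edge contributes one copy of $|f(x)-f(y)|$) is the one forced by consistency with the discrete co-area formula and with Proposition \ref{prop:finiteversion}, where $J_\Omega(1_K)=|\partial K\cap E_\Omega|$; under the literal ``each unordered edge once'' reading both the functional and the proposition would acquire the same spurious factor, so your interpretation is the correct one. The boundary-edge bookkeeping you flag at the end is handled correctly: the current value on an edge into $\tau U$ is constrained by the frozen datum $\varphi(y)$ but contributes only to the $x$-entry of the subgradient, and the correspondence between subgradient selections and currents in $\mathcal{C}_U(f)$ is indeed bijective.
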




The discrete analog of functions of least gradient was introduced in metric random walk spaces including graphs by Maz\'{o}n, P\'{e}rez-Llanos, Rossi, and Toledo \cite{Mazon16}, see also \cite{GornyMazon21,Mazon23book,MST23}.
\df\label{leastgradient}

For finite $U\subset V,$ a function $f\in\R^{\bar U}$ is called of least gradient in $U$ if for any $g\in\R^{\bar U}$ with $g|_{\tau U}=f|_{\tau U}$, then $$J_U(g)\geq J_U(f).$$
\dfd

The following are the characterizations of minimal subgraphs.
\tm\label{minimalcurrent1}
For $A\subset V,$ the following are equivalent:
\begin{enumerate}
\item $A$ is a minimal subgraph in $V.$
\item For any finite $U\subset V,$ $1_{A\cap \overline{U}}$ is of least gradient in $U.$
\item $0\in \Delta^V_1(1_A).$
\end{enumerate}
\tmd

Now we turn to minimal subgraphs in the integer lattices. We denote by $\Z^n$ the graph of the $n$-dimensional integer lattice consisting of the set of vertices $\Z^n=\{x\in \R^n: x_i\in \Z, \forall 1\leq i\leq n\}$ and the set of edges $\{\{x,y\}: |x-y|=1, x,y\in\Z^n\}.$

The first main result is the following statement.
\tm\label{main1}
Any minimal subgraph in $\Z^2$ is exactly one of the following subgraphs up to isomorphisms:
\begin{enumerate}[(1)]
\item 7 families of minimal subgraphs with non-geodesic boundary (see Section 2 for the definition); see Fig.\ref{fig1-1}-Fig. \ref{fig1-7}.\label{main1-1}
\item 3 families of minimal subgraphs with geodesic non-simple boundary (see Section 2 for the definition); see Fig.\ref{fig2-1}-Fig.\ref{fig2-3}.
\item  19 families of minimal subgraphs with geodesic simple boundary. More precisely, there are 7 families of connected minimal subgraphs: 5 fimilies all have exactly one connected component of boundaries; see Fig.\ref{fig3-1-1}-Fig.\ref{fig3-1-5}; 2 families both have exactly two connected of boundaries; see Fig.\ref{fig3-2-1}, Fig.\ref{fig3-2-2}. There are 12 families of disconnected minimal subgraphs: they are all complementary subgraphs of connected minimal subgraphs.
\end{enumerate}
\tmd

\begin{figure}[htbp]
\centering
\begin{minipage}{0.49\linewidth}
\centering
       \includegraphics[height=0.35\linewidth,width=0.5\linewidth]{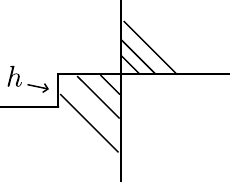}
  \caption{
  \small
 $h\leq 2$. }
 \label{fig1-1}
\end{minipage}
\begin{minipage}{0.49\linewidth}
\centering
       \includegraphics[height=0.35\linewidth,width=0.5\linewidth]{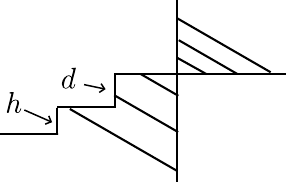}
  \caption{
  \small
 $h=d=1.$  }
\label{fig1-2}
\end{minipage}
\begin{minipage}{0.49\linewidth}
\centering
       \includegraphics[height=0.35\linewidth,width=0.5\linewidth]{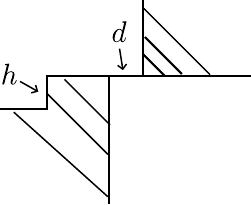}
  \caption{
  \small
 $h=d=1.$ }
 \label{fig1-3}
\end{minipage}
\begin{minipage}{0.49\linewidth}
\centering
       \includegraphics[height=0.35\linewidth,width=0.5\linewidth]{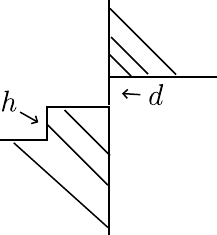}
  \caption{
  \small $h=d=1.$   }
\label{fig1-4}
\end{minipage}

\begin{minipage}{0.49\linewidth}
\centering
       \includegraphics[height=0.35\linewidth,width=0.5\linewidth]{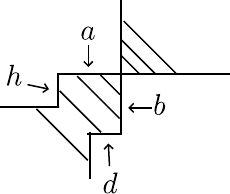}
  \caption{
  \small $h=d=1,a\geq 2,b\geq 2.$ }
 \label{fig1-5}
\end{minipage}
\begin{minipage}{0.49\linewidth}
\centering
       \includegraphics[height=0.35\linewidth,width=0.5\linewidth]{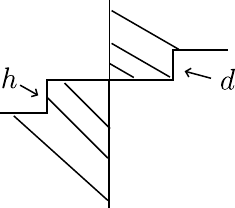}
  \caption{
  \small $h=d=1.$   }
\label{fig1-6}
\end{minipage}

\centering
\begin{minipage}{0.49\linewidth}
\centering
       \includegraphics[height=0.35\linewidth,width=0.5\linewidth]{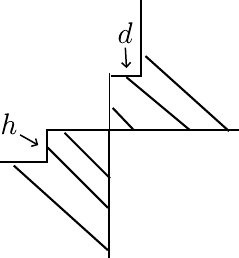}
  \caption{
  \small $h=d=1.$ }
 \label{fig1-7}
\end{minipage}
\end{figure}

\begin{figure}[htbp]
\centering
\begin{minipage}{0.49\linewidth}
\centering
       \includegraphics[height=0.65\linewidth,width=0.8\linewidth]{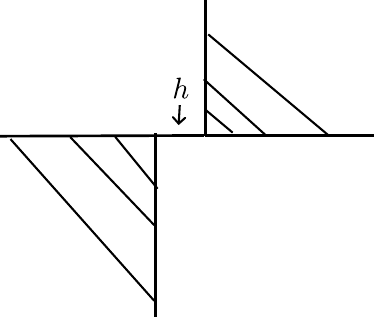}
  \caption{
  \small $h\leq 2.$}
 \label{fig2-1}
\end{minipage}
\begin{minipage}{0.49\linewidth}
\centering
       \includegraphics[height=0.65\linewidth,width=0.8\linewidth]{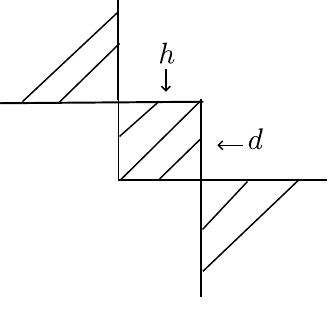}
  \caption{
  \small $h=d=1.$  }
\label{fig2-2}
\end{minipage}

\begin{minipage}{0.49\linewidth}
\centering
       \includegraphics[height=0.65\linewidth,width=0.8\linewidth]{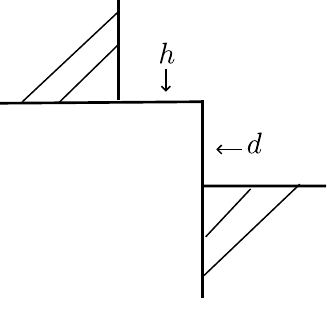}
  \caption{
  \small $h=d=1.$  }
\label{fig2-3}
\end{minipage}
\begin{minipage}{0.49\linewidth}
\centering
       \includegraphics[height=0.65\linewidth,width=0.8\linewidth]{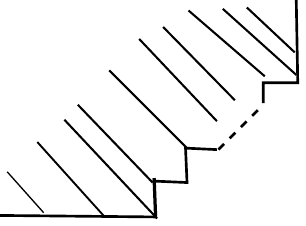}
  \caption{
  \small }
 \label{fig3-1-1}
\end{minipage}
\begin{minipage}{0.49\linewidth}
\centering
       \includegraphics[height=0.65\linewidth,width=0.8\linewidth]{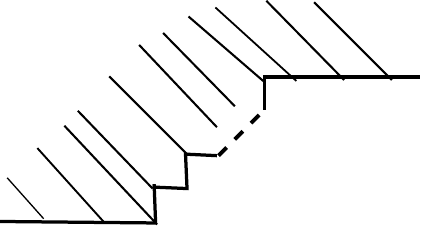}
  \caption{
  \small }
\label{fig3-1-2}
\end{minipage}
\begin{minipage}{0.49\linewidth}
\centering
       \includegraphics[height=0.65\linewidth,width=0.8\linewidth]{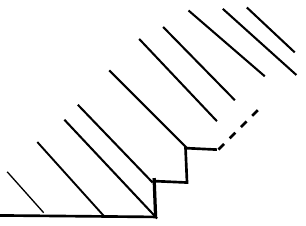}
  \caption{
  \small }
 \label{fig3-1-3}
\end{minipage}


\begin{minipage}{0.49\linewidth}
\centering
       \includegraphics[height=0.65\linewidth,width=0.8\linewidth]{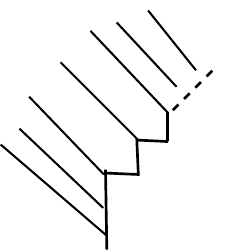}
  \caption{
  \small   }
\label{fig3-1-4}
\end{minipage}
\begin{minipage}{0.49\linewidth}
\centering
       \includegraphics[height=0.65\linewidth,width=0.8\linewidth]{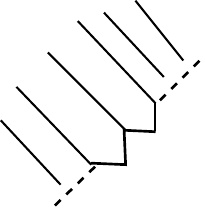}
  \caption{
  \small  }
\label{fig3-1-5}
\end{minipage}
\end{figure}

\begin{figure}
\begin{minipage}{0.49\linewidth}
\centering
       \includegraphics[height=0.6\linewidth,width=0.8\linewidth]{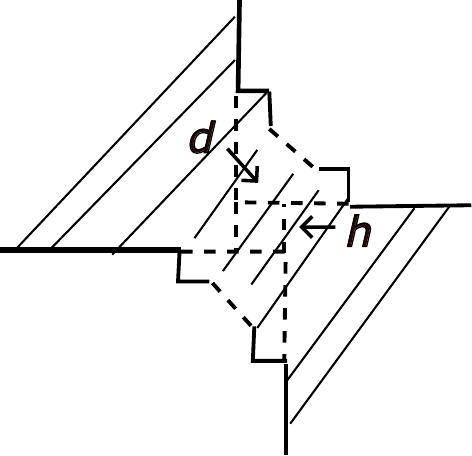}
  \caption{ $0\leq d\leq h+2$
  \small }
 \label{fig3-2-1}
\end{minipage}
\begin{minipage}{0.49\linewidth}
\centering
       \includegraphics[height=0.6\linewidth,width=0.8\linewidth]{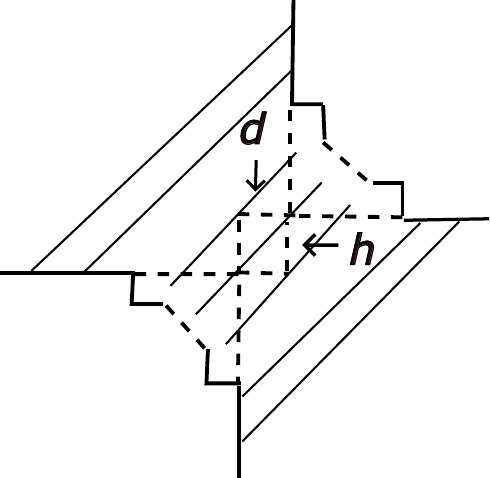}
  \caption{$d\geq 0,h\geq 0$
  \small  }
\label{fig3-2-2}
\end{minipage}
\end{figure}

The classification of minimal subgraphs is quite complicated. The subtleties mainly lie in two aspects: 1. The boundaries of minimal subgraphs may not be geodesic. 2. The boundary of minimal subgraphs may not be simple. For the first aspect, we find that all boundaries of minimal subgraphs consist of geodesic rays instead of geodesic lines; for example see Claim \ref{geodesicraybdy}. For the second aspect, the boundary of any minimal subgraph contains at most one unit square and distributes diagonally in some way; see Corollary \ref{noloops} and Lemma \ref{noT}. The arguments are based on topological, combinatorial and coarsely geometric methods.

The proof strategies of Theorem \ref{main1} are as follows. There are three key observations for the proof: Corollary \ref{convexclosed2}, Lemma \ref{orientedboundary}, and Lemma \ref{noboundedstrip}. Corollary \ref{convexclosed2} yields some geodesic convexity of minimal subgraphs, and implies that the boundary of any minimal subgraph determines this minimal subgraph itself in some sense. So the crucial point is to characterize the boundary of any minimal subgraph. Lemma \ref{orientedboundary} is a useful tool to describe the geometry of oriented boundary of any minimal subgraph, which suggests that the boundary behaves like a simple path in general. Lemma \ref{noboundedstrip} gives a strong geometric restriction of the boundary of any minimal subgraph, which applies to deduce a strong property, Corollary \ref{nomoretwocomponents}, that the number of connected components of a minimal subgraph and its boundary is at most two. By the results in Lemma \ref{complementaryminimal} and Lemma \ref{nomoretwocomponents}, it suffices to consider connected minimal subgraphs with at most two connected boundary components. There are two steps: First, we use several important properties to describe the geometry and topology of the boundary of any minimal subgraph  (whether the boundary is geodesic or simple). Second, we use the arguments of currents to confirm the precise structure of boundary and the minimal subgraph; see Corollary \ref{minimalcurrent} and Lemma \ref{orientedgeodesic}.

On the other hand, the classification of high-dimensional minimal subgraphs(i.e. in $\Z^n,n\geq 3$) seems much more complicated. The reasons are as follows: The result as in Lemma \ref{orientedboundary} fails in higher dimension, see Fig.\ref{interiortransversal};  the analogs of Corollary \ref{convexclosed2} and Lemma \ref{noboundedstrip} in higher dimension don't provide enough information about the local geometry.

\begin{figure}
\begin{minipage}{0.6\linewidth}
\centering
       \includegraphics[height=0.6\linewidth,width=0.8\linewidth]{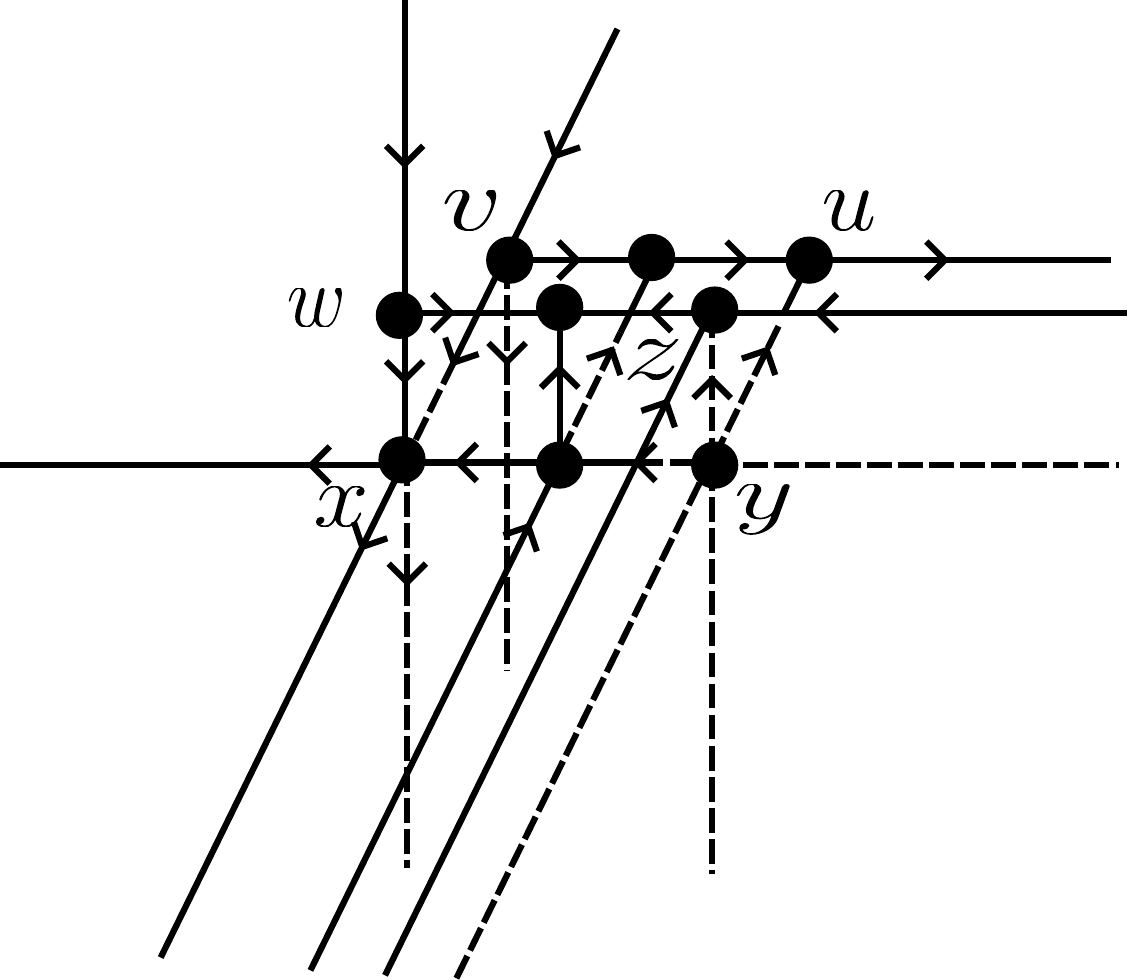}
  \caption{
  \small }
 \label{interiortransversal}
\end{minipage}
\end{figure}

Nevertheless, we can decompose any three dimensional minimal subgraph $\m$ into a minimal sub-subgraph $\m^3\cup \m^2$ (union of three skeleton and two skeleton) and its one skeleton.
\tm\label{redution3dim}
For a minimal subgraph $\m\subset\Z^3$, $\m^3\cup\m^2$ is minimal. 
\tmd

\rmk
$\m^3$ is not minimal in general, see Fig. \ref{M3notmini} for a counterexample by observing $0\notin\Delta_1(1_{\m})(x).$
\begin{figure}
\centering
\begin{minipage}{0.6\linewidth}
\centering
       \includegraphics[height=0.6\linewidth,width=0.9\linewidth]{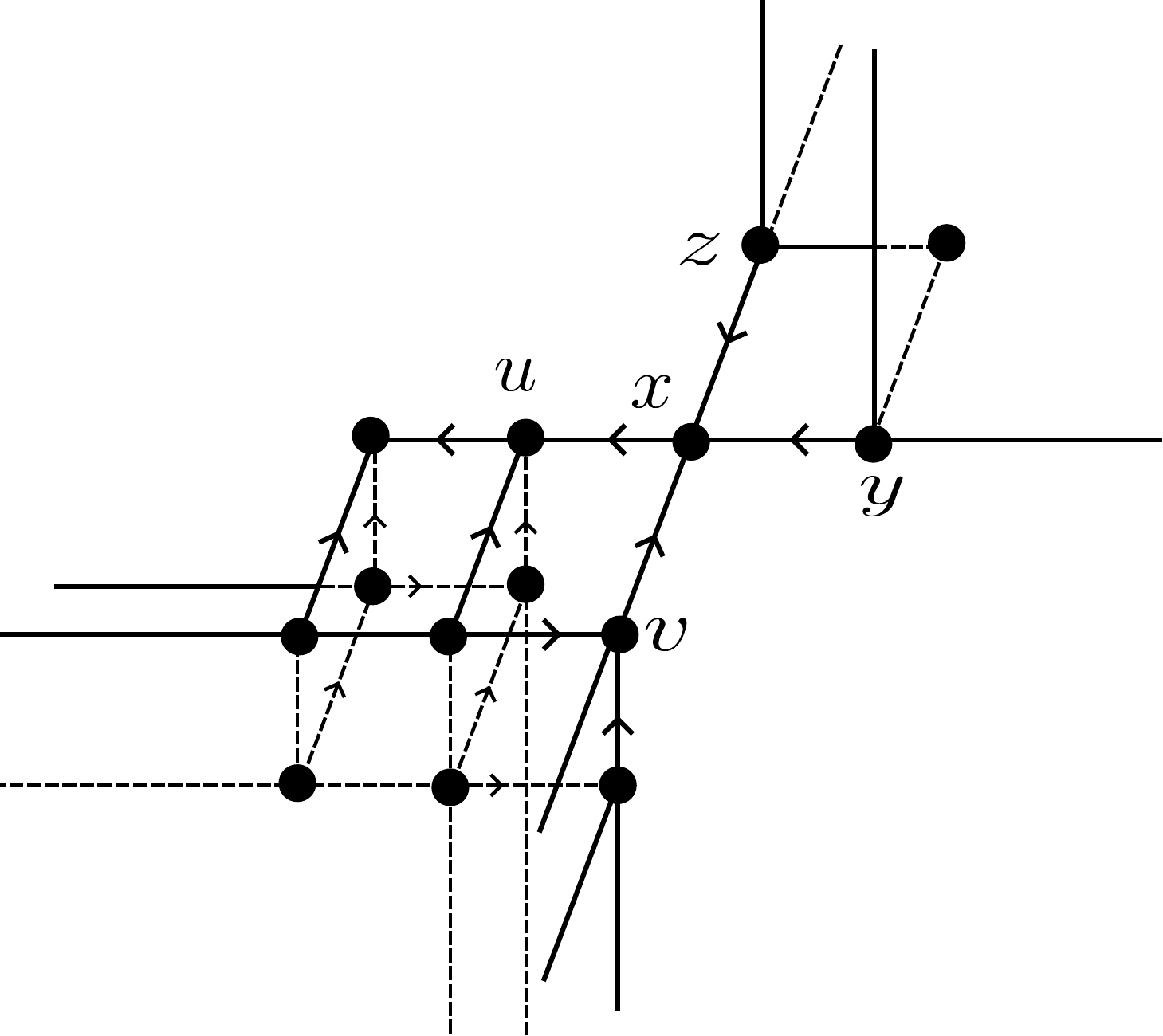}
  \caption{
   }
 \label{M3notmini}
\end{minipage}
\end{figure}
\rmkd

Recall that $\phi:(X,d_X)\longrightarrow(Y,d_Y)$ is called a rough isometry if $$d_X(x_1,x_2)=d_Y(\phi(x_1),\phi(x_2)),d_Y(Y,\phi(X))\leq c,$$ for any $x_1,x_2$ in $X$ and some positive constant $c$. Note that this is stronger than usual definitions in \cite{Woess00,MR1835418}.
For a minimal subgraph $\m\subset\Z^n$, we know $\m^n$ does not always inherit the minimality of $\m$. But it shares the coarse geometry of $\m$ and its boundary. We prove the following result.
\tm\label{ncellroughlyisometry}
For a minimal subgraph $\m\subset\Z^n$, the natural embedding $\m^n$ (with the induced metric) $\subset\m$ is a rough isometry between metric spaces. Moreover, there exists a positive constant $c(n),$ depending only on $n,$ such that for sufficiently large $r$ and any $x\in \m,$
\begin{align}
&\dfrac{|\m^n\cap\hat B_r(x)|}{|\m\cap\hat B_r(x)|}\geq c(n),\label{ncellroughlyisometry1}
\\& \dfrac{|\de\m^{n}\cap\hat B_r(x)|}{|\de \m\cap\hat B_r(x)|}\geq \dfrac{1}{1+2n},\label{ncellroughlyisometry2}
\end{align} where $\hat B_r(x):=\{y\in \R^n:\max\limits_{1\leq i\leq n}|y_i-x_i|\leq r\}$ denotes the $\infty$-normed ball centered at $x$ of radius $r.$
\tmd

We expect that $\m^n$ determines the asymptotic geometry of $\m$ and its boundary, so that the inequalities (\ref{ncellroughlyisometry1}), (\ref{ncellroughlyisometry2}) in Theorem \ref{ncellroughlyisometry} could be possibly improved, see Problem \ref{probelm1}.

Moreover, we prove some restriction on the geometry of $n$ dimensional minimal subgraphs.
\tm\label{noboundedplane}
If $\m\subset\Z^n$ is minimal, then there exist no two parallel hyperplanes bounding $\m$.
\tmd

We remark that the conclusions in Section 4 are independent of those of Section 3, {and note that the maximum principle holds for minimal subgraphs in $\Z^n$ in some sense; see Proposition \ref{maximumprinciple}.}

The organization of this paper is as follows. In Section 2, we introduce some basic concepts and properties of minimal subgraphs. In Section 3, we focus on two dimensional minimal subgraphs and prove Theorem \ref{main1}. In Section 4, we study the geometry of high-dimensional minimal subgraphs and prove Theorem \ref{redution3dim}. In Section 5, we list some open problems on geometry and topology of high-dimensional minimal subgraphs.

\section{preliminaries}
Let $\g=(V,E)$ be a simple, undirected graph. For each edge $\{x,y\}\in E,$ we write $(x,y)$ and $(y,x)$ for associated directed edges. We say $\mathcal{G}$ is connected if for any $x,y\in V$, there is a path $x=x_0\sim x_1\sim \cdots \sim x_n=y$ connecting $x$ and $y$ for some $n\in\N.$ For $x\in V,$ we denote by $\deg(x)$ the vertex degree of $x$ in $V.$  Usually, we consider the subgraph induced on a subset $\m,$ for which the degree of a vertex refers to that in $\m.$ The combinatorial distance $d_{\g}$ or simply $d$ unless specially stated on the graph is defined as, for any $x,y\in V$ and $x\neq y,$ $$d(x,y):=\inf\{n\in\N: \exists\{x_{i}\}_{i=1}^{n-1}\subset V, x\sim x_1\sim\cdots \sim x_{n-1}\sim y \}.$$

{For any function $\varphi\in \R^{\tau\Omega},$ we consider the functional $J_{\Omega,\varphi}:\R^\Omega\to\R$ with Dirichlet boundary condition $\varphi$ given by $$J_{\Omega,\varphi}(g)=J_{\Omega}(\widetilde g),$$ where $\widetilde g\in \R^{\overline{\Omega}}$ such that $\widetilde g|_\Omega=g, \widetilde g|_{\tau\Omega}=\varphi.$}

Note that $f\in\R^{\overline{\Omega}}$ is of least gradient if and only if
$f|_\Omega$ is the minimizer of
$J_{\Omega,f|_{\tau\Omega}}.$ One readily sees that
if $\Omega'\subset \Omega,$ $f$ is of least gradient in $\Omega,$ then $f$ is of least gradient in $\Omega'.$ A similar result holds for sets of least perimeter.


The following is the discrete co-area formula, see \cite{Bar17}.
\begin{prop} For any function $f\in \R^{\overline{\Omega}},$
$$J_\Omega(f)=\int_{-\infty}^\infty |\partial\{f>t\}\cap E_\Omega|dt,$$
where $\{f>t\}:=\{x\in \overline{\om}: f(x)>t\}.$
\end{prop}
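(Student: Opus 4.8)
The plan is to prove the identity edge-by-edge, by interchanging the sum over edges with the integral in $t$ and then computing the contribution of a single edge. First I would write the right-hand side as
$$\int_{-\infty}^\infty |\pa\{f>t\}\cap E_\Omega|\,dt=\int_{-\infty}^\infty\ \sum_{\{x,y\}\in E_\Omega}\mathbf 1\big[\{x,y\}\in\pa\{f>t\}\big]\,dt,$$
where $\mathbf 1[\cdot]$ denotes the indicator of the enclosed event. Since all summands are nonnegative and, for finite $\Omega$, the sum over $E_\Omega$ is finite, Tonelli's theorem permits exchanging the finite sum and the integral:
$$\int_{-\infty}^\infty |\pa\{f>t\}\cap E_\Omega|\,dt=\sum_{\{x,y\}\in E_\Omega}\int_{-\infty}^\infty\mathbf 1\big[\{x,y\}\in\pa\{f>t\}\big]\,dt.$$
When $\Omega$ is infinite one replaces Tonelli by monotone convergence along an exhaustion, so that both sides are simultaneously finite or infinite and the equality persists.

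Next I would evaluate the inner integral for a fixed edge $\{x,y\}\in E_\Omega$. Both endpoints lie in $\overline{\Omega}$, where $f$ is defined, so the event $\{x,y\}\in\pa\{f>t\}=E(\{f>t\},\{f>t\}^c)$ occurs precisely when exactly one of $f(x)>t$, $f(y)>t$ holds. Assuming without loss of generality that $f(x)\le f(y)$, this is exactly the set $t\in[f(x),f(y))$, whose Lebesgue measure is $f(y)-f(x)=|\na_{(x,y)}f|$; the single endpoint $t=f(x)$ is a null set and may be ignored. Hence
$$\int_{-\infty}^\infty\mathbf 1\big[\{x,y\}\in\pa\{f>t\}\big]\,dt=|\na_{(x,y)}f|.$$

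Substituting this into the previous display yields $\int_{-\infty}^\infty |\pa\{f>t\}\cap E_\Omega|\,dt=\sum_{\{x,y\}\in E_\Omega}|\na_{(x,y)}f|$, which is $J_\Omega(f)$ by definition, completing the argument. I do not expect a genuine obstacle here, as the computation is elementary; the only points requiring care are bookkeeping ones. One must match the normalizing factor $\tfrac12$ in the definition of $J_\Omega$ against the convention for counting directed versus undirected edges, reading the defining sum over ordered pairs so that each undirected edge of $E_\Omega$ carries weight $|\na_{(x,y)}f|$ exactly once, in agreement with the undirected count $|\pa\{f>t\}\cap E_\Omega|$ on the other side. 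I would also check at the outset that restricting the superlevel sets to $\overline{\Omega}$ together with the intersection with $E_\Omega$ records each relevant crossing edge with the correct multiplicity, so that the single-edge contribution lands properly when the edges are reassembled.
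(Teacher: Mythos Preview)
Your argument is correct and is the standard proof of the discrete co-area formula: rewrite the cardinality as a sum of edge indicators, swap sum and integral by Tonelli, and compute the single-edge contribution as $|f(y)-f(x)|$. The paper does not actually prove this proposition; it simply states it and cites \cite{Bar17}, so there is nothing further to compare. Your remark about reading the $\tfrac12$ in the definition of $J_\Omega$ as compensating for a sum over ordered pairs is the right way to reconcile the normalizations, and the only other bookkeeping point (that both endpoints of every $\{x,y\}\in E_\Omega$ lie in $\overline{\Omega}$, so $f$ is defined there) you have also noted.
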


Now we prove the following proposition.
\begin{prop}\label{prop:finiteversion} For finite $\Omega\subset V$ and $K\subset \overline\om,$ the following are equivalent:
\begin{enumerate}
\item $K$ is of least perimeter in $\om.$
\item $1_{K}$ is of least gradient in $\om.$
\item $0\in \Delta^\om_1(1_K).$
\end{enumerate}
\end{prop}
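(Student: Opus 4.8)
The plan is to prove the three-way equivalence by showing $(1)\Leftrightarrow(2)$ directly via the discrete co-area formula, and $(2)\Leftrightarrow(3)$ via Proposition~\ref{prop:subdiff} together with the standard convex-analytic fact that a point is a global minimizer of a convex functional if and only if $0$ lies in its subdifferential there.

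First I would handle $(2)\Leftrightarrow(3)$, which is essentially bookkeeping. By definition, $1_K\in\R^{\overline\om}$ is of least gradient in $\om$ precisely when $(1_K)|_\om$ minimizes the convex functional $J_{\om,\varphi}$ on $\R^\om$, where $\varphi=(1_K)|_{\tau\om}=1_{K\cap\tau\om}$. Since $J_{\om,\varphi}$ is convex (it is a finite sum of absolute values of affine functions of the argument) and finite-valued on all of $\R^\om$, a point is a global minimizer iff $0$ belongs to its subdifferential at that point. By Proposition~\ref{prop:subdiff}, that subdifferential equals $\Delta_1^\om(1_K)$, so least gradient is equivalent to $0\in\Delta_1^\om(1_K)$. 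This gives $(2)\Leftrightarrow(3)$ with no real work beyond citing the two stated facts.

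Next I would prove $(1)\Leftrightarrow(2)$. For the direction $(2)\Rightarrow(1)$: given any competitor $\hat K\subset\overline\om$ with $\hat K\cap\tau\om=K\cap\tau\om$, apply least gradient with $g=1_{\hat K}$, which satisfies $g|_{\tau\om}=f|_{\tau\om}$, to get $J_\om(1_{\hat K})\geq J_\om(1_K)$; since $1_K$ and $1_{\hat K}$ take only the values $0,1$, the co-area formula (or a direct count) gives $J_\om(1_K)=|\partial K\cap E_\om|$ and likewise for $\hat K$, so $|\partial K\cap E_\om|\le|\partial\hat K\cap E_\om|$, i.e. $K$ is of least perimeter. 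For the converse $(1)\Rightarrow(2)$: let $g\in\R^{\overline\om}$ with $g|_{\tau\om}=f|_{\tau\om}$ be arbitrary. Apply the co-area formula to $g$, writing $J_\om(g)=\int_{-\infty}^\infty|\partial\{g>t\}\cap E_\om|\,dt$. For each $t$, the superlevel set $\{g>t\}$ agrees with $\{f>t\}$ on $\tau\om$ because $g=f$ there; and $\{f>t\}$ is, for $t\in[0,1)$, exactly $K$ and empty or $K$-like otherwise (more precisely $\{f>t\}=K$ for $0\le t<1$, $\{f>t\}=\overline\om$ for $t<0$, $\{f>t\}=\emptyset$ for $t\ge 1$, in all cases with zero boundary in $E_\om$ except on $[0,1)$). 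Hence by least perimeter of $K$ we get $|\partial\{f>t\}\cap E_\om|\le|\partial\{g>t\}\cap E_\om|$ for a.e.\ $t$, and integrating yields $J_\om(f)\le J_\om(g)$, i.e. $1_K$ is of least gradient.

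The one point requiring a little care — the main (minor) obstacle — is the level-set comparison in $(1)\Rightarrow(2)$: one must verify that for each threshold $t$ the set $\{g>t\}$ is an \emph{admissible} competitor for $K$ in the sense of Definition~\ref{leastperimeter}, i.e. that $\{g>t\}\cap\tau\om = \{f>t\}\cap\tau\om$, which is immediate from $g|_{\tau\om}=f|_{\tau\om}$, and that for $t\notin[0,1)$ both superlevel sets contribute nothing to the integral over $E_\om$ (their boundaries meet $E_\om$ in the empty set since they are $\emptyset$ or $\overline\om$). Once this is observed, the co-area formula reduces the comparison of $1$-Dirichlet energies to the comparison of perimeters of the single set $K$ against the sets $\{g>t\}$, and the proof closes. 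I would also remark that this proposition is the finite-domain analogue of Theorem~\ref{minimalcurrent1}, and that the same co-area argument, applied over all finite $U\subset V$ simultaneously, is what upgrades it to the global statement there.
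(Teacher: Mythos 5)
Your proof is correct and follows essentially the same route as the paper: the discrete co-area formula reduces $(1)\Leftrightarrow(2)$ to a superlevel-set comparison (the paper integrates only over $t\in(0,1)$ and discards the rest, which is the same computation), and $(2)\Leftrightarrow(3)$ is the identical convexity/subdifferential argument via Proposition~\ref{prop:subdiff}. One harmless slip: for $t\notin[0,1)$ the set $\{g>t\}$ need not be $\emptyset$ or $\overline{\Omega}$ (only $\{1_K>t\}$ is), but this does not matter since the pointwise inequality you integrate has left-hand side zero for those $t$.
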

\begin{proof}
(1)$\Longrightarrow$(2):
Consider any $g\in \R^{\overline{\Omega}}$ with $g|_{\tau\Omega}=\mathds{1}_K|_{\tau\Omega}.$
For any $t\in (0,1),$ $\{g>t\}\cap {\tau\Omega}=K\cap {\tau\Omega}.$
Since $K$ is of least perimeter in $\om,$ $$|\partial\{g>t\}\cap E_\Omega|\geq|\partial K\cap E_\Omega|.$$

By the co-area formula,
\begin{eqnarray*}
J_\Omega(g)&=&\int_{-\infty}^\infty |\partial\{g>t\}\cap E_\Omega|dt\geq \int_{0}^1 |\partial\{g>t\}\cap E_\Omega|dt\\
&\geq& |\partial K\cap E_\Omega|=J_\Omega(1_K).
\end{eqnarray*}

(2)$\Longrightarrow$(1): For any $\widetilde{K}$ with $K\cap \tau\Omega=\widetilde{K}\cap \tau\Omega,$ the result follows from the least gradient property of $1_K$ by choosing $g=1_{\widetilde{K}}.$

(2)$\Longleftrightarrow$(3):
$1_K\in\R^{\overline{\Omega}}$ is of least gradient in $\om$ if and only if
$1_K|_\Omega$ is the minimizer of
$J_{\Omega,1_K|_{\tau\Omega}}.$ Since $J_{\Omega,1_K|_{\tau\Omega}}$ is a convex function on $\R^\om,$ $1_K|_\Omega$ is the minimizer if and only if $0$ is in the subdifferential of $J_{\Omega,1_K|_{\tau\Omega}}$ at $1_K|_\Omega,$ which is $0\in \Delta_1^\om(1_K)$ by Proposition~\ref{prop:subdiff}.

\end{proof}


We write $B_r(x):=\{y\in V: d(y,x)\leq r\}$ for the ball of radius $r$ centered at $x.$
\begin{proof}[Proof of Theorem~\ref{minimalcurrent1}]
(1)$\Longleftrightarrow$(2): This follows from Proposition~\ref{prop:finiteversion}.

(2)$\Longrightarrow$(3): Consider a sequence of balls $\{B_r\}_{r=1}^\infty$ where $B_r:=B_r(p)$ for some $p\in V.$ For $U=B_r,$ $r\geq 1,$ it follows from Proposition~\ref{prop:finiteversion}, $$0\in \Delta_1^{B_r}(1_{A\cap \overline{B_r}}).$$
That is, for each $r\geq 1,$ there is a minimal current $a^r$ associated with $1_{A\cap \overline{B_r}}$ on $B_r.$ Since there are countable edges in $E$ and $$\sup_{\{x,y\}\in E_{B_r}}|a^r_{xy}|\leq 1,\quad \forall r\geq 1,$$ there is a subsequence $r_i\to \infty$ and a current $a^\infty$ on $V$ such that
$$a^{r_i}_{xy}\to a^\infty_{xy},\quad \forall \{x,y\}\in E.$$
One easily sees that $a^\infty\in \mathcal{C}_V(1_A)$ and $a^\infty$ is minimal. Hence $0\in \Delta_1^V(1_A).$

(3)$\Longrightarrow$(2): For any minimal $a\in \mathcal{C}_V(1_A),$ one easily verifies that for any finite $U\subset V,$
$a|_{E_U}\in \mathcal{C}_U(1_{A\cap \overline{U}}),$ which is also minimal in $U.$

This proves the theorem.
\end{proof}


{Now we introduce some notions on $\Z^2$.} For $x_1,x_2,\cdots,x_k\in \Z^2$, we say these vertices are \emph{horizontal (vertical,resp.)} or $x_1,x_2,\cdots,x_{i}$ is \emph{horizontal  (vertical,resp.) to} $x_{i+1},\cdots,x_k$ if they are in a horizontal (vertical,resp.) line. We say $x$ is \emph{left-horizontal (right-horizontal,up-vertical,down-vertical,resp.) to} or a \emph{left-horizontal (right-horizontal,up-vertical,down-vertical,resp.)} neighbor of $y$ if $x$ is horizontal (vertical,resp.) to $y$ and is on the left(right,up,down,resp.) of $y$.

Given any two paths $\alpha\supset\alpha_1:=x_1\sim x_2\sim x_3$, $\alpha$ is called \emph{flat} if its vertices are all horizontal or vertical. The vertex $x_2$ is called \emph{a corner} of $\alpha$ if the subpath $\alpha_1$ is not flat. We say a path is \emph{simple} if all vertices have one or two neighbors in the path.

Given any path $\alpha$ containing $x$, $x$ is called a \emph{projective horizontal (vertical, resp.) interior point} if there is a horizontal (vertical,resp.) line $\beta$ such that the distance projection (with respect to $\beta$) image of $x$ lies in the interior of that of $\alpha$; see Fig. \ref{figinterior}. Assume the path $\alpha:=x_0\sim x_1\sim x_2\cdots\sim x_k\sim x_{k+1}\subset \m$, we call $\alpha$ is \emph{an isolated path} in $\m$ if $\deg(x_i)=2$ for $1\leq i\leq k$ for the induced subgraph $\m$; see Fig. \ref{figisolatedpath}.
\begin{figure}[htbp]
\centering
\begin{minipage}{0.7\linewidth}
\centering
       \includegraphics[height=0.3\linewidth,width=0.63\linewidth]{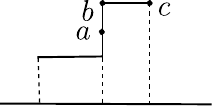}
  \caption{
  \small
 $a,b$ are horizontal interior points, but $c$ is not an interior point. }
 \label{figinterior}
\end{minipage}

\begin{minipage}{0.7\linewidth}
\centering
       \includegraphics[height=0.45\linewidth,width=0.7\linewidth]{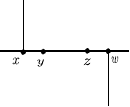}
  \caption{
  \small
 The horizontal path $[x,w]$ is an isolated path in $\m$. }
 \label{figisolatedpath}
\end{minipage}
\end{figure}


By Theorem~\ref{minimalcurrent1}, we have the following.
\co\label{minimalcurrent}
$\m\subset\Z^n$ is minimal if and only if $0\in \Delta_1(1_{\m})$. \cod

\rmk\label{minimalcurrent2}
Corollary \ref{minimalcurrent} provides an effective way to check the minimality of subgraphs by the equation (\ref{oneLaplacian}). This plays an important role to classify all minimal subgraphs in $\Z^2$.

\rmkd


Given a minimal graph $\g=(V,E)\subset \Z^2$, we associate it with a geometric space $X(\g)\subset\R^2$. To be precise, we identify the graph structure with the natural corresponding 1-skeleton in $\R^2$, i.e. identify $(x,y)\in E$ with $[x,y]\subset\R^2$, and associate any loop $x\sim y\sim z\sim w\sim x$ with a unit square enclosed by the loop. The boundary $\de\g$ is called \emph{simple} if for any $x\in\de\g$, there are at most two neighbors of $x$ in $\de\g$. $\de\g$ is called \emph{geodesic} if $d_{\de\g}(y,z)=d_{\g}(y,z)$ holds for all $y,z\in\de\g.$

For $(x,y)\in E,$ the edge $(x,y)$ is called a \emph{boundary edge} of $\g$ if there is at most one unit square in $X(\g)$ containing $[x,y]$. A vertex $z\in v$ is called a \emph{boundary vertex} if it is contained by a boundary edge. A boundary path $\alpha:=x_0\sim x_1\cdots\sim x_k$ is called \emph{oriented in $\g,$} oriented in short, if $k\geq 2$ and $\g$ contains one of the following subgraphs for any subpath $\hat \alpha:=u\sim v\sim w\subset\alpha$; see Fig. \ref{figoriented}.
If we equip $\alpha$ with an orientation, $\alpha$ is called \emph{right (left,resp.) oriented} if these subgraphs for the subpaths lie on the right (left,resp.) hand side of $\alpha$; see Fig. \ref{figoriented2}.
\begin{figure}[htbp]
\centering
\begin{minipage}{0.7\linewidth}
\centering
       \includegraphics[height=0.3\linewidth,width=0.63\linewidth]{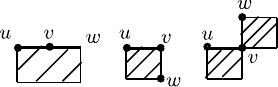}
  \caption{
  \small
 There are three local subgraphs for black bold boundary subpath of length two. }
 \label{figoriented}
\end{minipage}

\begin{minipage}{0.7\linewidth}
\centering
       \includegraphics[height=0.33\linewidth,width=0.4\linewidth]{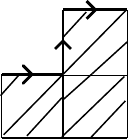}
  \caption{
  \small
 The black bold boundary path is right oriented. }
 \label{figoriented2}
\end{minipage}

\end{figure}





\section{geometry of two dimensional minimal subgraphs and proof of Theorem \ref{main1} }

In this section, we write $\m$ for a minimal subgraph in $\Z^2.$

\lm\label{complementaryminimal}
If $\m$ is minimal, then so is the complementary subgraph $\m^c$.
\lmd
\pf
This is direct by definition and Corollary \ref{minimalcurrent}.
\pfd

\lm\label{nolongline}
$\dm$ can not contain an isolated geodesic path with length $L\geq 3$.
\lmd

\pf
If not, we may assume $\alpha:=x_0\sim x_1\sim\cdots\sim x_k$ is an isolated geodesic path in $\dm$ with $k\geq 3$. Then one can remove this path except $x_0,x_k$, and obtain the new subgraph $\m_1$. Taking $\om=\alpha$, one easily sees that $$|\partial\m\cap E_\om|\geq|\partial\m_1\cap E_\om|+2.$$ This is impossible since $\m$ is minimal.

\pfd

\lm\label{noisolatedpoint}
$\m$ can not contain an isolated point(i.e.\ it has only one neighbor in $\m$). As a consequence, $\dm$ is locally one of the three subgraphs and any boundary vertex has at least two boundary neighbors in $\de\m$; see Fig. \ref{figngh}.
\begin{figure}[htbp]
\centering
\begin{minipage}{0.8\linewidth}
\centering
       \includegraphics[height=0.4\linewidth,width=0.9\linewidth]{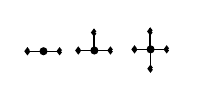}
  \caption{
  \small
 There are three local subgraphs for the bigger black boundary vertex. }
 \label{figngh}
\end{minipage}
\end{figure}
\lmd

\pf
If not, we get another subgraph $\m_1$ by deleting the isolated point $x$. It is clear that $|\partial\m\cap E_\om|=|\partial\m_1\cap E_\om|+2$, where $\om:=\{x\}$. This contradicts the minimality of $\m.$
\pfd

\lm\label{convexclosed}
If there is a finite non-closed simple path in $\m$ lying on one side of a horizontal/vertical line and the line contains two endpoints of the path, then the domain enclosed by the line and the path is contained in $\m$.
\lmd

\pf
We argue by contradiction.

We may assume the path $\alpha:=x_0\sim x_1\sim \cdots\sim x_n$ lies above on the horizontal line $l$ such that $x_0,x_1\in l$. Denote by $\mathcal A$ the domain enclosed by the line and the path.

If $\mathcal A$ is not in $\m$, then one can find $y_0\sim y_1\sim\cdots\sim y_m$ such that the domain enclosed by the horizontal line $l^\prime$ through $y_0,y_m$ (except the line $l^\prime$) is contained in $\m$, and $y_0,y_m\in \m,y_1,\cdots,y_{m-1}\notin \m.$ 
One can get another subgraph $\m_1$ by adding $y_1,\cdots,y_{m-1}$ to $\m$, then one easily deduces that
$$|\partial\m\cap E_\om|\geq|\partial\m_1\cap E_\om|+1+m-(m-1)=|\partial\m_1\cap E_\om|+2.$$
This contradicts the minimality of $\m$.

\pfd

\co\label{convexclosed2}
If $x,y\in\m$ lie in a horizontal/vertical line and in the same connected component of $\m$, then the horizontal/vertical segment between $x$ and $y$ is
contained in $\m.$
\cod
\pf
Since $x,y$ are in the same connected component of $\m$, then there is a finite simple path $\alpha:=x=x_0\sim x_1\cdots\sim x_n=y$ between $x$ and $y$. Let $l$
be the line on which $x,y$ lie, and $\{x=z_0,z_1,\cdots z_k=y\}:=l\cap\alpha$. Note that each subpath between $z_i$ and $z_{i+1}$ of $\alpha$ satisfies the
condition of Lemma \ref{convexclosed}. Thus we prove the result by Lemma \ref{convexclosed}.
\pfd

\co\label{twocrossbd}
Any horizontal/vertical line cannot intersect two boundary edges and one other edge consecutively in a connected component of $\m,$ i.e. there cannot exist two boundary edges $(x_1,x_2),(y_1,y_2)$ and one other edge $(z_1,z_2)$ satisfying that
$x_1,y_1,z_1$ and $x_2,y_2,z_2$ lie in two horizontal/vertical lines in the given order.
\cod
\pf
If not, one can deduce that the rectangle $\mathcal R(x_1,x_2,z_1,z_2)$ is contained in
$\m$ by Corollary \ref{convexclosed2}. This is impossible, since $(y_1,y_2)$ is a boundary edge.

\pfd


\lm\label{orientedboundary}
If there is a finite simple right (left,resp.) oriented boundary path $\alpha:=x_0\sim x_1\sim\cdots\sim x_n $ in $\dm$, and $x_k$ is a projective horizontal (vertical,resp.) interior point in $\alpha$ with $1\leq k\leq n-1$. Then there is no vertex in $\m$ on the left(right,resp.) hand side of $\alpha$ such that it is adjacent and vertical (horizontal,resp.) to $x_k$ in $\m$.
\lmd
\pf
Otherwise, we may assume there is a vertex $y\sim x_k\in\m$ on the left side of $\alpha$ with $y\notin \alpha$. By Lemma \ref{noisolatedpoint}, there is a vertex $y_1(\neq x_k)\sim y\in\m$.

Case 1. If $x_{k-1},x_k,x_{k+1}$ are horizontal/vertical, then it is clear $y,x_k$ are vertical/horizontal. Furthermore, $y_1,x_k$ are also vertical/horizontal. If not, we can assume $y_1,x_{k+1}$ are vertical/horizontal. Combining Corollary \ref{convexclosed2} and the condition that $\alpha$ is left oriented, one can deduce that $(x_k,x_{k+1})$ is contained in two unit squares in $\m$ and it is not a boundary edge of $\m$. It is a contradiction. Applying Lemma \ref{noisolatedpoint} again, there is a vertex $y_2(\neq y_1)\sim y_1\in\m$. By the same argument, we have that $y_2,x_k$ are vertical/horizontal. Continuing the process, we finally get an isolated vertical/horizontal ray $x_k\sim y\sim y_1\sim y_2\sim\cdots$ in $\m$. This contradicts the minimality of $\m$ by Lemma \ref{nolongline}.

Case 2. If $x_{k-1},x_k,x_{k+1}$ are not horizontal/vertical, we may assume that $x_{k-1},x_k$ are horizontal and $x_k,x_{k+1}$ are vertical. Recall that $x_k$ is projective interior vertex in $\alpha$, so that we may assume $x_k,x_s$ are vertical and $x_s,x_{s+1}$ are horizontal for some $k+1\leq s\leq n-1$. If $x_k,y$ are horizontal, then $x_k,x_s$ and $y,x_{s+1}$ are both vertical. Combining Corollary \ref{convexclosed2} with the condition that $\alpha$ is left oriented, one can deduce that $(x_s,x_{s+1})$ is not a boundary edge of $\m$. It is a contradiction. Hence $x_k,y$ are vertical. Using same arguments in Case 1, one can get a vertical ray $x_k\sim y\sim y_1\sim y_2\sim\cdots$ in $\m$. This contradicts the minimality of $\m$ by Lemma \ref{nolongline}.
\pfd

\rmk\label{orientedboundary2}
The proof of Lemma \ref{orientedboundary} is valid for more general case and it is used frequently throughout this section.
\rmkd

\lm\label{noboundedstrip}
 $\dm$ can not contain two parallel horizontal or vertical rays.
\lmd

\pf
If not, we may assume that $l_1:=x_1\sim x_2\sim \cdots,l_2:=y_1\sim y_2\sim\cdots\subset \dm$ are two horizontal rays originating from two vertical vertices $x_1,y_1$ respectively with $c:=d(x_1,y_1)$. Denote by $\mathcal S$ the strip bounded by $l_1,l_2$. So one can find $x_i\in l_1,y_i\in l_2$ subjected to $\hat c:=d(x_1,x_i)=d(y_1,y_i )\geq c+3$, where $i$ is some positive integer. There are two cases.

Case 1. $l_1,l_2$ are in the same connected component of $\m$. Then $\m$ contains $\mathcal S$ by Corollary \ref{convexclosed2}.  One can obtain another graph $\m_1$ via removing the rectangle $\mathcal R(x_2,x_{i-1},y_2,y_{i-1})$. Note that $x_2,y_2,x_3,y_3,\cdots,x_{i-1},y_{i-1}$ have edges not in $\m$ by Lemma \ref{orientedboundary}. Therefore, this yields that
$$|\partial\m\cap E_\om|\geq|\partial\m_1\cap E_\om|+2(\hat c-2)-2c\geq|\partial\m_1\cap E_\om|+2,$$
where $\om=\mathcal R(x_1,x_i,y_1,y_i)$. This is a contradiction by the minimality of $\m.$

Case 2. $l_1,l_2$ are in different connected components of $\m$. We may assume that $l_1$ is above $l_2$.
For each $x_k,y_k$ with integer $k\geq 1$, there exist $[\hat x_k,\hat y_k]\subset [x_k,y_k]$ with
\begin{align}
[\hat x_k,\hat y_k]\cap \m=\{\hat x_k,\hat y_k\},d(\hat x_k,\hat y_k)\geq 2.\label{innerboundary}
\end{align}
Let $\alpha_1:=\{\hat x_1,\hat x_2,\cdots\}(\alpha_2:=\{\hat x_1,\hat x_2,\cdots\},resp.)$.
Then we get another graph $\m_1$ by filling a rectangle $\mathcal R(x_1,x_i,y_1,y_i)$. Note that these $\hat x_k,\hat y_k$ are boundary vertices of $\m$.
Therefore, by using (\ref{innerboundary}) one can show that $$|\partial\m\cap E_\om|\geq|\partial\m_1\cap E_\om|+2\hat c-2c\geq|\partial\m_1\cap E_\om|+4,$$
where $\om=\mathcal R(x_1,x_j,y_1,y_j)$. This contradicts the minimality of $\m.$
\pfd

\co\label{noboundedstrip2}
Any two disjoint infinite simple boundary paths in $\de\m$ can not be contained in some unbounded infinite strip isometric to $[0,d]\times[0,\infty)$, where $d$ is some positive constant.
\cod

\pf
The proof is similar to that of Lemma \ref{noboundedstrip}.  
Assume that $l_j$ is the line through $x_j,y_j$. We may assume
$\beta_1:=z_1\sim z_2\sim\cdots,\beta_2:=w_1\sim w_2\sim\cdots\subset \dm$ are two disjoint infinite boundary simple paths bounded by two horizontal rays $\alpha_1:=x_1\sim x_2\sim \cdots,\alpha_2:=y_1\sim y_2\sim\cdots$ and $x_1,z_1,w_1,y_1$ are vertical. Let $\gamma_j$ be the vertical line through $x_j,y_j$, $c:=d(x_1,y_1),\hat c:=d(x_1,x_i)\geq c+3$ for some integer $i$.

We only prove the case that $\beta_1,\beta_2$ are in the same connected component of $\m,$ and the others are similar. It follows that the strip $\hat{\mathcal S}$ bounded by $\beta_1,\beta_2$ is contained in $\m$ by Corollary \ref{convexclosed2}. Note that there is at least one edge not in $\m$ for $\gamma_j\cap\beta_i(i=1,\ 2)$ with $j\geq 2$ by Corollary \ref{orientedboundary}. Therefore, we have
$$|\partial\m\cap E_\om|\geq|\partial\m_1\cap E_\om|+2(\hat c-2)-2c\geq|\partial\m_1\cap E_\om|+2,$$
where $\om=\hat{\mathcal S}\cap\mathcal{R}(x_2,x_{i-1},y_{i-1},y_2)$ and $\m_1$ is obtained by removing $\om$ from $\m$. This contradicts the minimality of $\m.$
\pfd

\lm\label{digonaldistrubution}
$\m$ doesn't contain one of three subgraphs with $[u,y]\cup[y,z]\cup[z,w]\subset\de\m$; see Fig. \ref{figdiagonal}.   
\begin{figure}[htbp]
\centering
\begin{minipage}{0.8\linewidth}
\centering
       \includegraphics[height=0.25\linewidth,width=\linewidth]{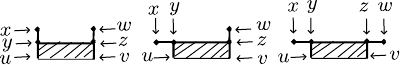}
  \caption{
  \small
$x\sim y\sim u,z\sim w\sim v$. }
 \label{figdiagonal}
\end{minipage}
\end{figure}
\lmd
\pf
By Corollary \ref{convexclosed2}, we have that for the first subgraph in Fig. \ref{figdiagonal} the rectangle $R(x,y,z,w)\subset \m$ and then $[y,z]$ is not a boundary path. It is a contradiction. For the second one in Fig. \ref{figdiagonal}, applying Lemma \ref{orientedboundary} to the projective vertical interior point $y$ in the oriented path $[u,y]\cup[y,z]\cup[z,w]$, this yields that $[y,z]$ or $[u,y]$ is not a boundary path. It is a contradiction.

For the third one in Fig. \ref{figdiagonal}, we have
\begin{claim}\label{noT}
The connected component $\mathcal C$ containing $x$ of $\m$ and the interior of the domain $II$ (or $III,IV$) are disjoint; see Fig. \ref{figdiagonal2}.
\end{claim}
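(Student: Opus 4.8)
The plan is to argue by contradiction. Suppose the connected component $\mathcal C$ of $\m$ containing $x$ meets the interior of domain $II$; the arguments for $III$ and $IV$ will be entirely analogous, so I treat $II$. Pick $p\in\mathcal C$ in the interior of $II$; since $x,p\in\mathcal C$, there is a simple path $\gamma\subset\m$ from $x$ to $p$. Viewing $X(\m)\subset\R^2$, the boundary path $[u,y]\cup[y,z]\cup[z,w]$ together with the three lines carrying its edges cut a neighbourhood of this zigzag into the regions $I,\dots,IV$ of Fig.~\ref{figdiagonal2}, with $x$ on the $I$-side. The first point to pin down is that $\gamma$ cannot cross the zigzag transversally: an edge of $\gamma$ meeting $[y,z]$, $[u,y]$ or $[z,w]$ transversally would be an edge of $\m$ incident to a corner $y$ or $z$ on the wrong side, which is excluded by Lemma~\ref{orientedboundary} applied at $y,z$ (or by the isolated-ray construction in its proof together with Lemma~\ref{nolongline}). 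Hence $\gamma$ can pass from the $I$-side to the interior of $II$ only by rounding one of the two ends $u$ or $w$ of the zigzag.

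Next I would close $\gamma$ into a loop and invoke geodesic convexity. Adjoin to $\gamma$ all horizontal and vertical segments joining pairs of vertices of $\gamma$ lying on a common line; all these segments belong to $\m$ by Corollary~\ref{convexclosed2}, since they lie in the connected component $\mathcal C$. The resulting filled set $\mathcal D\subset X(\m)$ is bounded, and because it contains both a point near $x$ (on the $I$-side) and the point $p$ in the interior of $II$, a Jordan-curve count shows that $\mathcal D$ encloses at least one of the edges $[u,y]$, $[y,z]$, $[z,w]$. But then that edge is contained in two unit squares of $X(\m)$, contradicting the hypothesis that it is a boundary edge. The sub-cases according to which of the three edges gets enclosed are handled identically, and so are the cases of domains $III$ and $IV$; this yields the claim. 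It may streamline the region-bookkeeping to carry out part of the analysis on the complementary subgraph $\m^c$, which is minimal by Lemma~\ref{complementaryminimal} and on whose side the domains $II,III,IV$ lie.

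The step I expect to be the main obstacle is the topological bookkeeping in the first paragraph: one must show that $\gamma$ genuinely has to wind around an end of the zigzag and cannot instead ``hug'' a boundary edge without creating an extra unit square, and one must make precise the Jordan-curve count that forces $\mathcal D$ to enclose a specific boundary edge rather than escaping to infinity. This needs a careful finite case analysis near $u$, near $w$ and near the corners $y,z$, using Lemma~\ref{orientedboundary}, Lemma~\ref{noisolatedpoint} and Lemma~\ref{nolongline} repeatedly to kill the degenerate configurations; once these are excluded, the convexity step is routine.
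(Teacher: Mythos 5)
Your overall strategy---argue by contradiction, use the geodesic convexity of Corollary~\ref{convexclosed2}, and derive a forbidden second unit square on a boundary edge---matches the paper's, but the decisive step is deferred to an unproved ``Jordan-curve count,'' and as stated that deduction does not hold. A bounded connected set containing a point on the $I$-side and a point in the interior of $II$ need \emph{not} enclose any of the edges $[u,y]$, $[y,z]$, $[z,w]$: as your own first paragraph establishes, the path $\gamma$ passes from $I$ to $II$ precisely by going around an end of the zigzag, and adjoining axis-parallel chords between vertices of $\gamma$ lying on common lines does not by itself close a loop around a boundary edge. For your filled set $\mathcal D$ to separate, say, $[u,y]$ from infinity you would have to exhibit a pair of vertices of $\gamma$, one on each side of $[u,y]$, lying on a common horizontal or vertical line that meets $[u,y]$; producing such a pair is exactly the content that is missing, and you flag it yourself as the main obstacle.

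The paper closes this gap with a more direct crossing argument that makes the filling unnecessary: since $s\in\mathcal C$ lies in the \emph{interior} of $II$ and is joined to $x$ inside $\mathcal C$, any connecting lattice path must contain a vertex $p$ (distinct from $x$, resp.\ $u$) on one of the two rays bounding $II$, i.e.\ horizontal or vertical to $x$ or to $u$; a single application of Corollary~\ref{convexclosed2} then places the whole segment $[p,x]$ or $[p,u]$ in $\m$, which already produces a unit square through $x,y,u$ and contradicts $[y,u]$ being a boundary edge. Two smaller discrepancies: the three domains are not ``entirely analogous''---the paper disposes of $IV$ directly by Corollary~\ref{convexclosed2} together with Lemma~\ref{orientedboundary} (no path argument is available or needed there), and only $II$ and $III$ are related by symmetry; and passing to the complementary subgraph $\m^c$ does not simplify anything here.
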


\begin{figure}[htbp]
\centering
\begin{minipage}{0.8\linewidth}
\centering
       \includegraphics[height=0.3\linewidth,width=0.7\linewidth]{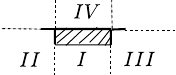}
  \caption{
  \small
}
 \label{figdiagonal2}
\end{minipage}
\end{figure}
\pf[Proof of Claim \ref{noT}]
By Corollary \ref{convexclosed2} and Corollary \ref{orientedboundary}, we deduce that $\mathcal C$ and the interior of the domain $IV$ are disjoint.
By symmetry of the domian $II$ and $III$, one can assume that there is a vertex $s\in\mathcal C$ lying in the interior of the domain $II$. By definition of $\mathcal C$, it is easy to obtain that there is a vertex $p$ (distinct to $x$ or $u$ respectively) vertical or horizontal to $x$ or $u$ respectively. Then using Corollary \ref{convexclosed2} again, we get that $[p,x]\subset\m$ or $[p,u]\subset\m$ and therefore $\m$ contains a unit square containing $x,y,u$. This is impossible since $[y,u]$ is a boundary edge.
\pfd
Assume that $\mathcal C_1$ is the connected component of $\m$ in the domain $I$. Now we consider the boundary $\de \mathcal C_1$.

Case 1. $\de \mathcal C_1$ is finite. Using Lemma \ref{noisolatedpoint}, we can find a closed simple path $\alpha\subset \de \mathcal C_1$ to enclose $\mathcal C_1$. By Corollary \ref{convexclosed2}, we have $\alpha=\de \mathcal C_1$ and that $\mathcal C_1$ is the domain enclosed by $\alpha$. Taking $\om=\mathcal C$, by comparing the edge boundary of $\mathcal C_1$ with that of $[y,z]$, we get that 
$$|\partial\m\cap E_\om|>|\partial\m_1\cap E_\om|+2,$$
where $\m_1$ is obtained by removing $\mathcal C$ except $[y,z]$ from $\m$. This is impossible by the minimality of $\m.$

Case 2. $\de \mathcal C_1$ is infinite. Applying Corollary \ref{noboundedstrip2}, we have that $\de \mathcal C$ contains at most one infinite half simple path. If $\de \mathcal C$ contains one infinite half simple path $\beta$, then by Corollary \ref{twocrossbd} one deduces that $\beta$ only have finite backtracks in the direction of the line containing $x,y$. Hence one can get that the subpath $\beta_1$ which is sufficiently far away from $x$ of $\beta$ is a geodesic ray. This contradicts the minimality of $\m$ by Lemma \ref{nolongline}.
\pfd

\lm\label{squareloop}
If there is a simple loop in $\de\m$, then the loop is a unit square.
\lmd

\pf
Note that the domain enclosed by any closed simple loop in $\de \m$ is contained in $\m$ by Corollary \ref{convexclosed2}. Let $\alpha$ be a simple loop in $\de\m$ and $\mathcal D\subset\m$ be the domain enclosed by $\alpha.$ We may assume that $\mathcal A$ is the smallest rectangle containing $\mathcal D$. Using Corollary \ref{convexclosed2}, we have $\mathcal A\cap\alpha=[a,d^\prime]\cup [a^\prime,b]\cup[b^\prime,c]\cup[c^\prime,d]$, where $[a,d^\prime],[a^\prime,b],[b^\prime,c],[c^\prime,d]$ are the highest, the leftmost, the lowest, the rightmost subpath of $\alpha.$

Since $\alpha$ is a simple loop and $\mathcal A\cap\alpha=[a,d^\prime]\cup [a^\prime,b]\cup[b^\prime,c]\cup[c^\prime,d]$ for the smallest rectangle $\mathcal A$ containing $\mathcal D,$ we can assume that $a,a^\prime,b,b^\prime,c,c\prime,d,d^\prime$ are arranged counterclockwise on the loop $\alpha$. Let $\alpha_1,\alpha_2,\alpha_3,\alpha_4$ be the boundary subpaths connecting $a$ and $a^\prime$, $b$ and $b^\prime$, $c$ and $c^\prime$, $d$ and $d^\prime$ in $\alpha$, respectively.

We claim that $\alpha_1,\alpha_2,\alpha_3,\alpha_4$ are geodesic. It suffices to prove the result for $\alpha_1:=a=a_1\sim a_2\sim \cdots a_i=a^\prime$. Suppose it is not true, we may assume that $a_j,a_{j+1}$ and $a_{j+2},a_{j+3}$ are horizontal, $a_j,a_{j+3}$ and $a_{j+1},a_{j+2}$ are vertical. If $a_{j+1}$ is on the left of $a_j$, then by the fact that $[a^\prime,b]$ is the leftmost, there is a boundary edge $(a_k,a_{k+1})$ for some integer $j+3<k<i-1$ such that $a_j,a_{j+3},a_k$ and $a_{j+1},a_{j+2},a_{k+1}$ are vertical. This is impossible by Corollary \ref{twocrossbd}. If $a_{j+1}$ is on the right of $a_j$, then the square $R(a_j,a_{j+1},a_{j+2},a_{j+3})$ is contained in $\m$ by Corollary \ref{convexclosed2}. This implies $(a_{j+1},a_{j+2})$ is not a boundary edge and it is a contradiction. Thus we prove the claim.

It follows from the above claim that $a=a^\prime,b=b^\prime,c=c^\prime,d=d^\prime$ if and only if $a,a^\prime;b,b^\prime;c,c^\prime;d,d^\prime$ are pairwise horizontal or vertical. Note that $\mathcal D$ have other neighbors in $\m$ exactly adjacent to the subset $\mathcal B\subset \{a,a^\prime,b,b^\prime,c,c^\prime,d,d^\prime\}$ by Lemma \ref{orientedboundary}.

Case 1. At least three of $a\neq a^\prime,b\neq b^\prime,c\neq c^\prime,d\neq d^\prime$ hold. It is clear that $|\mathcal B|\leq 1$ by Lemma \ref{orientedboundary}. This obviously yields that $$|\partial\m\cap E_\om|-|\partial\m_1\cap E_\om|\geq 6\cdot2-2>0,$$
where $\om:=\mathcal D$ and the new subgraph $\m_1$ is obtained by removing $\mathcal D$ from $\m$ except $\mathcal B$. This contradicts the minimality of $\m.$

Case 2. Two or one of $a\neq a^\prime,b\neq b^\prime,c\neq c^\prime,d\neq d^\prime$ hold. We only need to prove the result for the subcase that one of $a\neq a^\prime,b\neq b^\prime,c\neq c^\prime,d\neq d^\prime$ holds, since the other case is similar to those of Case 1 and the previous subcase. Now we may assume $a\neq a^\prime$ and $b= b^\prime,c=c^\prime,d= d^\prime$. Applying Lemma \ref{orientedboundary} and Lemma \ref{digonaldistrubution}, we have $|\mathcal B|\leq 2$ and $\mathcal B=\{b,d\}$ if $|\mathcal B|=2$. Therefore we get $$|\partial\m\cap E_\om|-|\partial\m_1\cap E_\om|\geq3\cdot2-2\cdot2>0,$$
where $\om:=\mathcal D$ and the new subgraph $\m_1$ is obtained by removing $\mathcal D$ from $\m$ except $\mathcal B$. This contradicts the minimality of $\m.$

Case 3. None of $a\neq a^\prime,b\neq b^\prime,c\neq c^\prime,d\neq d^\prime$ holds, i.e. $a=a^\prime,b= b^\prime,c=c^\prime,d= d^\prime$. Then $\mathcal D=\mathcal R(a,b,c,d)$ is a rectangle with $d(a,b)=m\geq 1,d(a,d)=n\geq 1$. By Lemma \ref{orientedboundary}, Lemma \ref{digonaldistrubution}, $|\mathcal B|\leq 2$ and $\mathcal B=\{a,c\}$ or $\mathcal B=\{b,d\}$. Observing that $\m$ is minimal, we deduce that $$|\partial\m_1\cap E_\om|\geq|\partial\m\cap E_\om|\geq|\partial\m_1\cap E_\om|+2(m+n)-4,$$
where $\om:=\mathcal D$ and the new subgraph $\m_1$ is obtained by removing $\mathcal D$ from $\m$ except $\mathcal B$. This implies $m=n=1$ and the result follows.

\pfd

\co\label{squareloop2}
Given any connected component $\mathcal C$ of $\m$, if $\de\mathcal C$ contains some loops, then these loops are exactly one unit square.
\cod

\pf
Otherwise, we may assume that there are two unit distinct squares $\mathcal R(x,y,z,w)$, $\mathcal R(\hat x,\hat y,\hat z,\hat w)$ in $\de\mathcal C$ by Lemma \ref{squareloop}. Suppose that $x,y,z,w$ and $\hat x,\hat y,\hat z,\hat w$ are arranged anticlockwise on $\mathcal R(x,y,z,w),\mathcal R(\hat x,\hat y,\hat z,\hat w)$ respectively, and $(x,w)((\hat x,\hat w),(x,y),(\hat x,\hat y),resp.)$ are the leftmost (rightmost, lowest, highest, resp.) in $\mathcal R(x,y,z,w)$, $\mathcal R(\hat x,\hat y,\hat z,\hat w)$ respectively.

Recall that $\mathcal R(x,y,z,w)$, $\mathcal R(\hat x,\hat y,\hat z,\hat w)$ are in the connected component $\mathcal C$. Applying Corollary \ref{twocrossbd}, we may assume that $d(z,\hat z)=d(\mathcal R(x,y,z,w),\mathcal R(\hat x,\hat y,\hat z,\hat w))$ and there is a simple boundary path $\alpha$ realizing $d(z,\hat z)$ in the domain $I$; see Fig. \ref{figonesquare}. 
\begin{figure}[htbp]
\centering
\begin{minipage}{0.7\linewidth}
\centering
       \includegraphics[height=0.5\linewidth,width=0.6\linewidth]{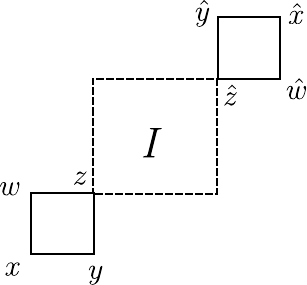}
  \caption{
  \small
The rectangular region enclosed by dotted segments is $I$.}
 \label{figonesquare}
\end{minipage}
\end{figure}

Observe that $w,\hat y$ both have exactly two neighbors in $\m$, by the proof of Lemma \ref{orientedboundary} for the simple boundary path $x\sim w\sim z\cup\alpha\cup \hat z\sim \hat y\sim\hat x$ with right oriented edges $(x,w),(w,z),(\hat z,\hat y),(\hat y,\hat x)$. Similar arguments yield that $y,\hat w$ both have exactly two neighbors in $\m$.

Let $\mathcal R$ be the rectangle containing $w,x,y,\hat w,\hat x,\hat y$ and $\m_1$ be the subgraph obtained by removing the part of $\mathcal C$ in $\mathcal R$ except $x,\hat x$. Using Lemma \ref{twocrossbd} and recalling that $y,w,\hat y,\hat w$ all have exactly two neighbors in $\m$, we have $$|\partial\m\cap E_\om|\geq|\partial\m_1\cap E_\om|+8-4>|\partial\m_1\cap E_\om|,$$ where $\om:=\mathcal R$. This contradicts the minimality of $\m.$

\pfd

\lm\label{orientedgeodesic}
Assume that $\alpha$ is an oriented boundary geodesic line in $\m$, then there are two geodesics (denoted by $\infty$ if the geodesic does not exist) $\alpha_1,\alpha_2$ such that minimal currents in the region $D$ enclosed by $\alpha_1$ and $\alpha_2$ is determined by $\alpha$; see Fig. \ref{figorientedgeodesic1}-\ref{figorientedgeodesic4}. Moreover, the region $\m^\prime:=D\cup \m$ is also minimal and there is an oriented boundary geodesic line $\alpha^\prime$ if $\alpha_1,\alpha_2$ are not both $\infty$.
\begin{figure}[htbp]
\centering
\begin{minipage}{\linewidth}
\centering
       \includegraphics[height=0.3\linewidth,width=0.94\linewidth]{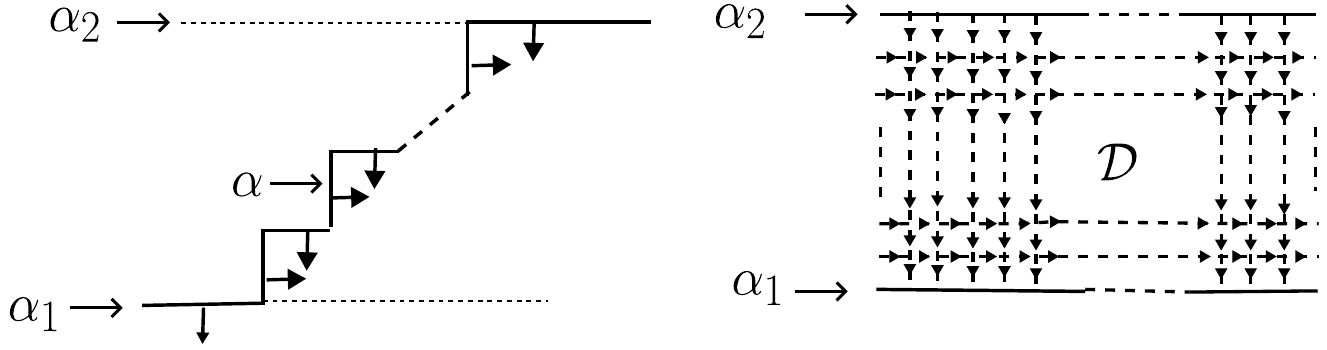}
  \caption{
  \small
$\alpha$ has finitely many corners and the current-determined region (determined by $\Delta_1(1_{\m})=0$) $\mathcal D$ is an unbounded strip. The black arrows indicate the currents (gradients) determined by $1_{\m}$. 
}
 \label{figorientedgeodesic1}
\end{minipage}
\end{figure}

\begin{figure}[htbp]
\centering
\begin{minipage}{\linewidth}
\centering
       \includegraphics[height=0.3\linewidth,width=1.14\linewidth]{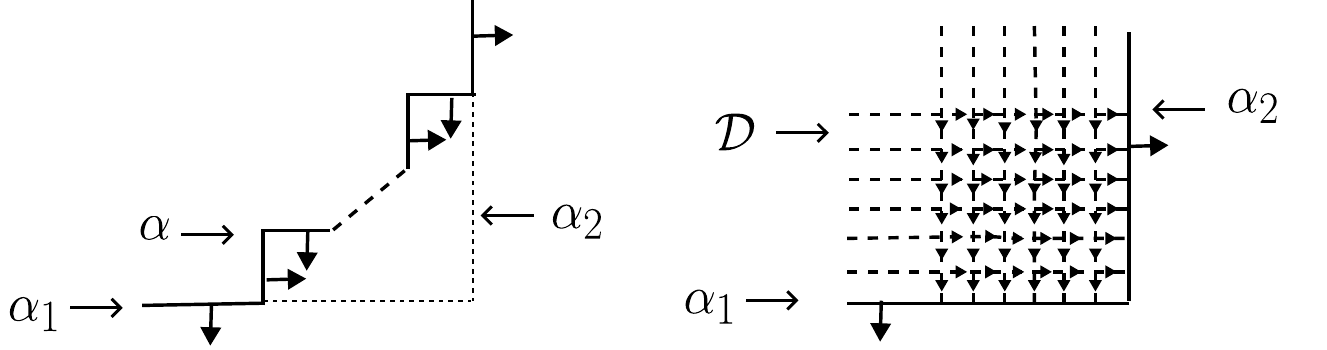}
  \caption{
  \small
$\alpha$ has finitely many corners and the current-determined region $\mathcal D$ is a quadrant region.  }
 \label{figorientedgeodesic2}
\end{minipage}
\end{figure}

\begin{figure}[htbp]
\centering
\begin{minipage}{\linewidth}
\centering
       \includegraphics[height=0.32\linewidth,width=1.14\linewidth]{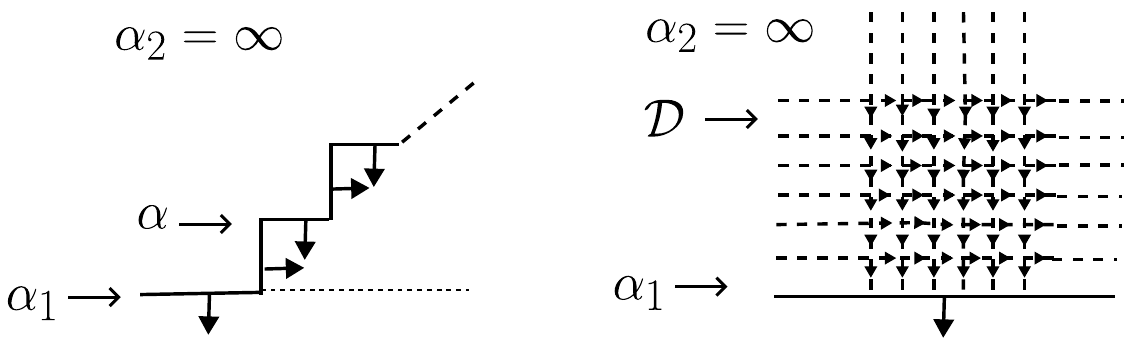}
  \caption{
  \small
$\alpha$ has infinitely many corners in one direction and the current-determined region $\mathcal D$ is a half plane.  }
 \label{figorientedgeodesic3}
\end{minipage}
\end{figure}

\begin{figure}[htbp]
\centering
\begin{minipage}{\linewidth}
\centering
       \includegraphics[height=0.32\linewidth,width=1.14\linewidth]{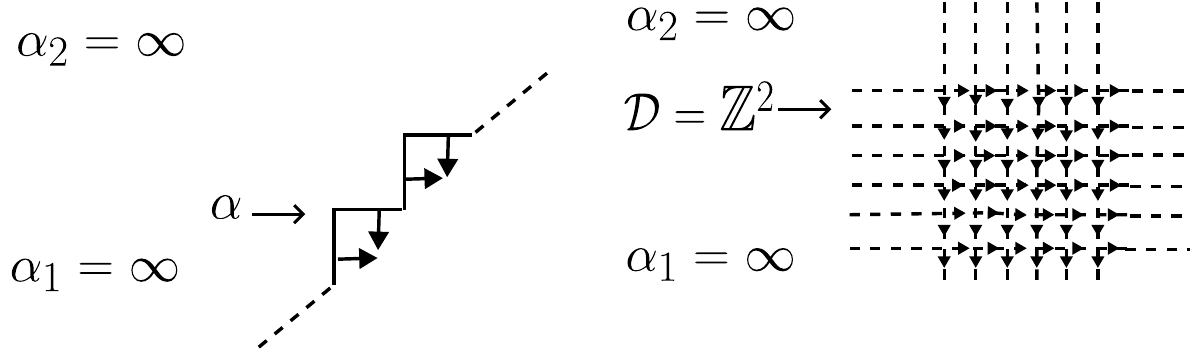}
  \caption{
  \small
$\alpha$ has bi-infinitely many corners and the current-determined region $\mathcal D$ is the whole plane or $\Z^2$.  }
 \label{figorientedgeodesic4}
\end{minipage}
\end{figure}
\lmd
\pf
The first statement can be deduced by the equation (\ref{oneLaplacian}) and Corollary \ref{minimalcurrent}. The currents in $\mathcal D$ yileds that $0\in \Delta_1(1_{\mathcal D})$. Using Corollary \ref{minimalcurrent} again, we have that $\m^\prime:=\m\cup\mathcal D$ is minimal. Applying Lemma \ref{orientedboundary}, one can obtain that $\alpha_1$ is an oriented boundary geodesic line (ray, line, resp.) in Fig. \ref{figorientedgeodesic1} (\ref{figorientedgeodesic2}, \ref{figorientedgeodesic3}, resp.), and $\alpha_2$ is an oriented boundary geodesic ray in Fig. \ref{figorientedgeodesic2}. Thus, we get that $\alpha^\prime:=\alpha_1$ ($\alpha_1\cup\alpha_2,\alpha_1,$ resp.) is an oriented boundary geodesic line in Fig. \ref{figorientedgeodesic1} (Fig. \ref{figorientedgeodesic2}, Fig. \ref{figorientedgeodesic3}, resp.).
\pfd
\co\label{twoboundary}
If the minimal subgraph $\m$ contains two oriented boundary geodesic lines $\alpha,\beta$ in different connected components of $\de\m$, then $\alpha,\beta$ are both of the form in Fig. \ref{figorientedgeodesic2} in Lemma \ref{orientedgeodesic}.
\cod
\pf
We argue by contradiction. Note that $\alpha\cap\beta=\emptyset$.

Case 1. $\alpha$ is of the form in Fig. \ref{figorientedgeodesic1} or Fig. \ref{figorientedgeodesic3}. By Lemma \ref{orientedgeodesic} we may assume $\alpha^\prime$ is a horizontal line.

If $\beta$ is of the form in Fig. \ref{figorientedgeodesic1} or Fig. \ref{figorientedgeodesic3}, and we assume that $\beta^\prime$ is a vertical line. Then it is easy to deduce that $\alpha\cap\beta\neq\emptyset$, which is impossible. If $\beta^\prime$ is also a horizontal line, then we can get a new minimal subgraph containing two boundary horizontal lines $\alpha^\prime,\beta^\prime$ by Lemma \ref{orientedgeodesic}. This is impossible by Lemma \ref{noboundedstrip}.

If $\beta$ is of the form in Fig. \ref{figorientedgeodesic2}, then we can get a new minimal subgraph containing two boundary horizontal rays by Lemma \ref{orientedgeodesic}. This is impossible by Lemma \ref{noboundedstrip}.

If $\beta$ is of the form in Fig. \ref{figorientedgeodesic4}, then it always holds that $\alpha\cap\beta\neq\emptyset$. This is impossible.

Case 2. $\alpha,\beta$ are both of the form in Fig. \ref{figorientedgeodesic4}. Since $\alpha\cap\beta=\emptyset$,
 $\alpha,\beta$ divide the plane into three domains $\mathcal{D}_1,\mathcal{D}_2,\mathcal{D}_3$, where $\mathcal{D}_2$ is bounded by $\alpha$ and $\beta$; see Fig. \ref{figorientedgeodesic5}.
\begin{figure}[htbp]
\centering
\begin{minipage}{\linewidth}
\centering
       \includegraphics[height=0.2\linewidth,width=0.6\linewidth]{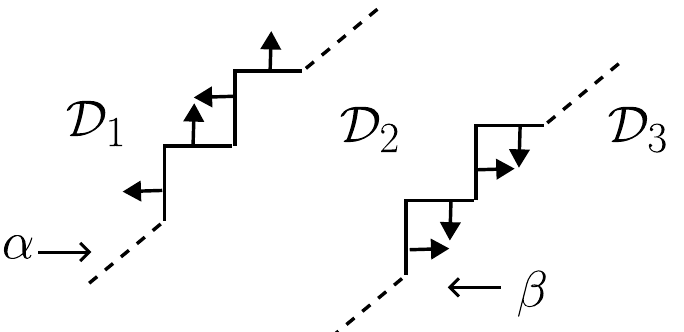}
  \caption{
  \small
  }
 \label{figorientedgeodesic5}
\end{minipage}
\end{figure}
Thus the currents determined by $\alpha$ are inconsistent with these determined by $\beta$ by Lemma \ref{orientedgeodesic}, i.e. $0\notin \Delta_1(1_{\m})$.

\pfd


\lm\label{rigidityforonecorner}
Assume $\de\m$ contains no loops. Let $\alpha$ be a boundary geodesic line in $\de\m$ with exactly one corner $x\in \alpha$. Then the connected component $\mathcal {D}$ of $\m$ containing $x$ in a quadrant region $\mathcal{E}$ enclosed by $\alpha$ is $\alpha$ or $\mathcal{E}$.
\lmd
\pf
If not, then one can find a vertex $p\in\mathcal{E}\backslash\mathcal{D}$. We may assume that $\alpha=\alpha_1\cup\alpha_2$, where $\alpha_1,\alpha_2$ are flat geodesic rays from $x$; see Fig. \ref {figmainthm4}. By Corollary \ref{convexclosed2}, two flat geodesic rays $\alpha_1^\prime,\alpha_2^\prime$ from $p,$ which don't intersect with $\mathcal{D}$; see Fig. \ref{figmainthm4}.
\begin{figure}[htbp]
\centering
\begin{minipage}{\linewidth}
\centering
       \includegraphics[height=0.25\linewidth,width=0.50\linewidth]{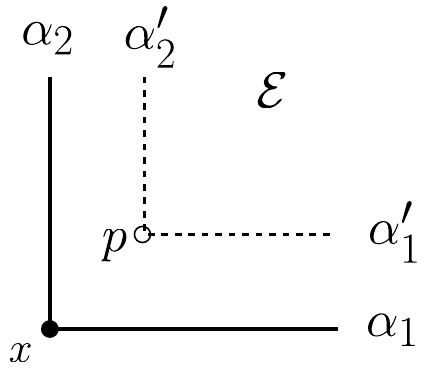}
  \caption{
  \small
}
 \label{figmainthm4}
\end{minipage}
\end{figure}
Then $\mathcal{D}\cap\mathcal{E}$ is bounded by $\alpha_1\cup\alpha_2$ and $\alpha_1^\prime\cup\alpha_2^\prime$.

Since $\mathcal{E}\backslash\mathcal{D}\neq\emptyset$, there is another bi-infinite simple boundary path $\beta\subset\de\m\cap\mathcal{D}$ with $\beta\neq\alpha_1\cup\alpha_2$. Since $\de\m$ has no loops, one can find an infinite simple subpath $\beta_1\subset\beta$ such that $\beta_1\cap(\alpha_1\cup\alpha_2)=\emptyset.$
Thus we can find two disjoint infinite simple boundary paths of $\mathcal{D}$ in the unbounded strip enclosed by $\alpha_1^\prime$ and $\alpha_1$ or enclosed by $\alpha_2^\prime$ and $\alpha_2$. This is impossible by Corollary \ref{noboundedstrip2}.

\pfd

\co\label{rigidityforfinitecorners}
Assume $\de\m$ contains no loops. Let $\alpha$ be a boundary geodesic line in $\de\m$ with finitely many corners. Then the connected component of $\m$ containing $\alpha$ in a quadrant region $\mathcal{D}$ enclosed by $\alpha$ is $\alpha$ or $\mathcal{D}$.
\cod
\pf
The argument of Lemma \ref{rigidityforonecorner} still works with minor modifications.
\pfd

\lm\label{nomoretwocomponents}
Assume that $\de\m$ is geodesic and contains no loops. Then both $\de\m$ and $\m$ have at most two connected components.
\lmd
\pf
If $\de\m$ has more than two connected components, then there are three oriented disjoint boundary geodesic lines by Lemma \ref{noisolatedpoint} and Lemma \ref{infinitybdy}. Using Corollary \ref{twoboundary}, one easily sees that there are two disjoint parallel boundary geodesic rays in a bounded strip. It is a contradiction by Lemma \ref{noboundedstrip}.

\pfd

Now we can prove Theorem \ref{main1}.

\pf[Proof of Theorem \ref{main1}]
We divide it into three cases.

Case 1. $\de\m$ is non-geodesic.

We may assume that $\de\m$ contains a path $x\sim z\cup[z,w]\cup w\sim y$ such that $[x,y]$ is not a boundary path with $d(x,y)=d(z,w)$, and $z,w$ are horizontal; see Fig. \ref{figmainthm1}. Then the rectangle determined by $x,y,w,z$ is contained in $\m$ by Corollary \ref{convexclosed2}.
\begin{figure}[htbp]
\centering
\begin{minipage}{0.8\linewidth}
\centering
       \includegraphics[height=0.15\linewidth,width=0.44\linewidth]{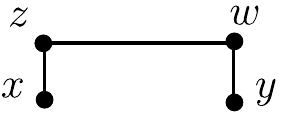}
  \caption{
  \small
}
 \label{figmainthm1}
\end{minipage}
\end{figure}
If $\deg(z)\geq 3$ and $\deg(w)\geq 3$, then this contradicts Lemma \ref{digonaldistrubution} by symmetry of $z$ and $w.$

If $\deg(z)=2$ and $\deg(w)=2$, then we get the new subgraph $\m_1$ by removing the rectangle $\mathcal R(x,y,z,w)$ except $[x,y]$. By Lemma \ref{orientedboundary}, we have $$|\partial\m\cap E_\om|=|\partial\m_1\cap E_\om|+2>|\partial\m_1\cap E_\om|,$$ where $\om:=\mathcal R(x,y,z,w)$. This contradicts the minimality of $\m.$

Thus, either $\deg(z)\geq 3,\deg(w)=2$ or $\deg(w)\geq 3,\deg(z)=2$ occurs. So that we can assume $\deg(w)\geq 3$ and $\deg(z)=2$.
Denote by $\mathcal C$ the connected component of $\m$ containing $w$ and $\mathcal C_1 (\mathcal C_2,resp.),$ the part of $\mathcal C$ in the domain $I(II,resp.)$; see Fig. \ref{figmainthm2}.
\begin{figure}[htbp]
\centering
\begin{minipage}{\linewidth}
\centering
       \includegraphics[height=0.32\linewidth,width=0.64\linewidth]{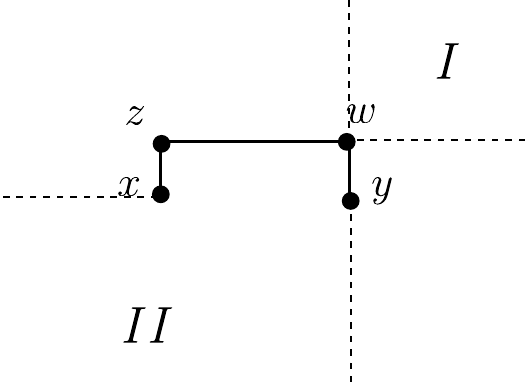}
  \caption{
  \small
}
 \label{figmainthm2}
\end{minipage}
\end{figure}

Step 1 of Case 1. $\mathcal C_1(\mathcal C_2,resp.)$ is enclosed by two geodesic rays from $w$ in the domain $I(II,resp.)$.

\begin{claim}\label{noloops}
$\de\mathcal C_1$ contains no loops.
\end{claim}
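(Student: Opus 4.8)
The plan is to argue by contradiction. Suppose $\de\mathcal{C}_1$ contains a loop; then it contains a \emph{simple} loop, and among all such we may choose an innermost one, $\sigma$, so that the bounded region $\mathcal{D}_\sigma$ enclosed by $\sigma$ contains in its interior no vertex of $\de\mathcal{C}_1$. Since the vertices of $\sigma$ all lie in $\mathcal{C}$, a connected component of $\m$, we may sweep $\mathcal{D}_\sigma$ by horizontal segments with endpoints on $\sigma$ and apply Corollary \ref{convexclosed2} repeatedly to conclude $\mathcal{D}_\sigma\subseteq\mathcal{C}_1\subseteq\m$.

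The next step is to upgrade $\sigma$ to a loop of $\de\m$. An edge $e$ of $\sigma$ can be a boundary edge of $\mathcal{C}_1$ without being a boundary edge of $\m$ only if the unit cell adjacent to $e$ on the side away from $\mathcal{D}_\sigma$ is a cell of $X(\m)$ but not of $X(\mathcal{C}_1)$; such a cell must contain a vertex of $\mathcal{C}\setminus\mathcal{C}_1$, which — by the local picture of the domains $I$ and $II$ around $w$ in Fig.\ \ref{figmainthm2} — forces $e$ to be incident to $w$. I would rule this out using $\deg(z)=2$, the fact that $[z,w]$ is a boundary edge, and $\mathcal{R}(x,y,z,w)\subseteq\m$: there is too little room near $w$ for $\sigma$ to pass through $w$ and still bound a region as above. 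Hence $\sigma\subseteq\de\m$, so by Lemma \ref{squareloop} $\sigma$ is a unit square; denote its vertex set by $B$.

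It remains to contradict minimality. By Lemma \ref{orientedboundary} and Lemma \ref{digonaldistrubution}, at most two vertices of $B$ have degree $\geq 3$ in $\m$, and in the two-vertex case these are opposite corners of $B$. If at most one vertex of $B$ has degree $\geq 3$ (this subsumes the case $\mathcal{C}_1$ finite), then, deleting the degree-$2$ vertices of $B$ from $\m$ to obtain $\m_1$ and taking $\om:=B$, the edge count from the proof of Lemma \ref{squareloop} gives $|\pa\m\cap E_\om|>|\pa\m_1\cap E_\om|$, contradicting minimality. If instead $B$ has two opposite corners of degree $\geq 3$, then by Corollary \ref{squareloop2} $B$ is the unique loop of $\de\mathcal{C}$; the two corners of degree $\geq 3$ then each issue a boundary path of $\mathcal{C}_1$ on opposite sides of $B$, and these disjoint paths, constrained to domain $I$, are forced — far from $B$ — into a configuration (parallel boundary rays, or an isolated geodesic ray) excluded by Lemma \ref{noboundedstrip} or Lemma \ref{nolongline} (equivalently Corollary \ref{noboundedstrip2}).

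The hard part is this last step: a unit-square boundary loop is genuinely permitted in an arbitrary minimal subgraph (Corollary \ref{squareloop2}), so the contradiction cannot be extracted from minimality alone — one must use the special Case~1 configuration (the non-geodesic boundary arc through $z$ and $w$, with $\deg(z)=2$ and $\deg(w)\geq 3$) to confine the two boundary paths emanating from $B$. A secondary technical point is the upgrade $\sigma\subseteq\de\mathcal{C}_1\Rightarrow\sigma\subseteq\de\m$ in the second step, which depends on the local geometry of $I$ and $II$ near $w$.
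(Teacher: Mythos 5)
Your outline agrees with the paper's up to the reduction: both arguments reduce a hypothetical loop in $\de\mathcal C_1$ to a single unit square (via Lemma \ref{squareloop} / Corollary \ref{squareloop2}) and then seek a contradiction with minimality using the special Case~1 data. The case where at most one corner of the square has degree $\geq 3$ is fine as you count it. The genuine gap is exactly where you flag "the hard part": the case of two opposite corners of degree $\geq 3$. Your proposed contradiction --- that the two boundary paths issuing from these corners are "constrained to domain $I$" and hence forced into parallel rays in a bounded strip or an isolated geodesic ray --- does not follow. Domain $I$ is a quadrant, not a strip, and two disjoint infinite boundary paths leaving opposite corners of a square can perfectly well escape into two diagonal directions without ever lying in a bounded strip; indeed Fig.~\ref{fig2-2} exhibits a genuinely minimal subgraph with precisely this local picture, so no lemma of the form "two infinite boundary paths near a square force a contradiction" can be true without using the Case~1 configuration quantitatively. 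You state that one must use that configuration, but you never say how, and in particular $\deg(z)=2$ plays no role in your decisive count.

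The paper closes this case by a different and more concrete route. By Corollary \ref{twocrossbd} there is a simple boundary path $\alpha$ in domain $I$ joining $w$ to the corner $\hat x$ of the square nearest $w$; applying Lemma \ref{orientedboundary} to the two oriented boundary paths $\alpha\cup(\hat x\sim\hat y\sim\hat w)$ and $\alpha\cup(\hat x\sim\hat z\sim\hat w)$ forces $\deg(\hat y)=\deg(\hat z)=2$, i.e.\ it identifies \emph{which} two (opposite) corners are degree two, relative to the position of $w$. The contradiction is then a single surgery: remove the whole region $\mathcal S$ of $\m$ inside the rectangle spanned by $x,y,z$ and the square (keeping $[x,y]$ and $[\hat x,\hat y]$), and the edge count $|\pa\m\cap E_{\mathcal S}|\geq|\pa\m_1\cap E_{\mathcal S}|+2\cdot 2-2$ succeeds precisely because $\deg(z)=\deg(\hat y)=\deg(\hat z)=2$. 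So the missing idea in your proposal is to run $\alpha$ from $w$ to the square and use Lemma \ref{orientedboundary} along it to pin down the degree-two corners, after which no case analysis on "how many corners have degree $\geq3$" is needed. (Your secondary worry, upgrading a loop of $\de\mathcal C_1$ to a loop of $\de\m$, is legitimate but minor; the paper passes over it by applying Corollary \ref{squareloop2} directly.)
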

\pf[Proof of Claim \ref{noloops}]
Otherwise, $\de\mathcal C_1$ contains exactly one unit square $\mathcal R(\hat x,\hat y,\hat z,\hat w)$ by Corollary \ref{squareloop2}. We can assume that $\hat x,\hat y,\hat w,\hat z$ are arranged anticlockwise, $\hat x,\hat y$ are horizontal, and $\hat x$ is on the left of $\hat y$. By Lemma \ref{twocrossbd}, there is a boundary simple path $\alpha$ connecting $w,\hat x$ in the domain $I$. Applying Lemma \ref{orientedboundary} to the boundary simple paths $\alpha\cup \hat x\sim\hat y\sim \hat w$ and $\alpha\cup \hat x\sim\hat z\sim \hat w$, we have $\deg(\hat y)=\deg(\hat z)=2.$

Consider the new subgraph $\m_1$ by removing the part $\mathcal S$ in the rectangle determined by $x,y,z,\hat w, \hat y,\hat z$ from $\m$ except $[x,y],[\hat x,\hat y].$ Since $\deg(z)=\deg(\hat y)=\deg(\hat z)=2$, we have $$|\partial\m\cap E_\om|\geq|\partial\m_1\cap E_\om|+2\cdot 2-2>|\partial\m_1\cap E_\om|,$$ where $\om:=\mathcal S$. This contradicts the minimality of $\m.$
\pfd

Using Claim \ref{noloops} and Lemma \ref{nolongline}, we may assume that $\hat\beta_1:=\beta\cup\beta_1=(\hat\beta_2:=\beta\cup\beta_2,resp.)$ is the
leftmost(rightmost,resp.) simple infinite path in $\mathcal C_1$ such that $\beta=\hat\beta_1\cap\hat\beta_2$ is a path connecting $w$ and $\hat w$.
Let $l_1(l_2,resp.)$ be a vertical(horizontal,resp.) line through $w$; see Fig. \ref{figmainthm3}. Applying Corollary \ref{noboundedstrip2}, the distance projection of $\beta_1(\beta_2,resp.)$ on $l_1(l_2,resp.)$ is an infinite ray.
\begin{figure}[htbp]
\centering
\begin{minipage}{\linewidth}
\centering
       \includegraphics[height=0.42\linewidth,width=0.84\linewidth]{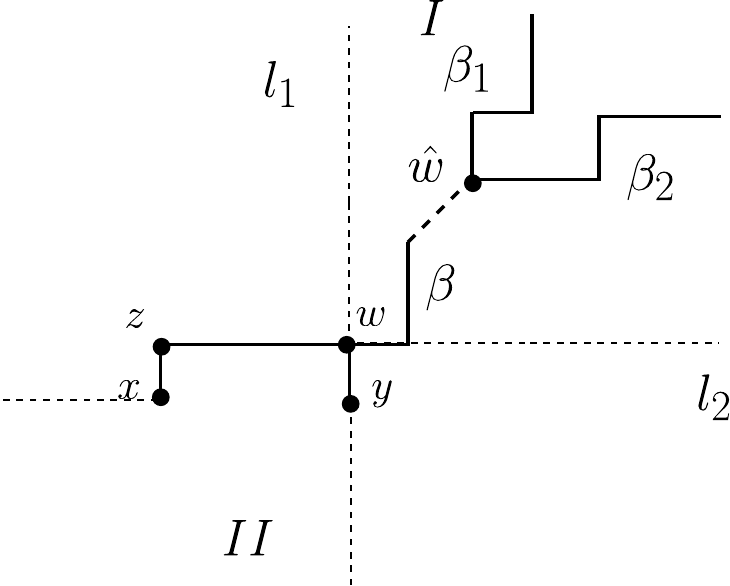}
  \caption{
  \small
}
 \label{figmainthm3}
\end{minipage}
\end{figure}

\begin{claim}\label{geodesicraybdy}
$\beta_1,\beta_2$ are geodesic, and the length of $\beta$ is at most one.
\end{claim}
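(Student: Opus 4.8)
The plan is to prove the two assertions in turn: first that $\beta_1$ (hence, by symmetry, $\beta_2$) is geodesic, and then that $\beta$ has length at most one. Throughout I would use that $\hat\beta_1=\beta\cup\beta_1$ is the leftmost simple infinite path of the component $\mathcal C_1$, so that $\mathcal C_1$ lies on one side of $\beta_1$ and $\beta_1$ is an oriented boundary path of $\m$ with $\mathcal C_1$ on that side; and that, by Corollary~\ref{noboundedstrip2}, the distance projection of $\beta_1$ onto $l_1$ is an infinite ray, so $\beta_1$ meets each row of an infinite consecutive family of rows.

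For the geodesicity of $\beta_1$, first I would invoke Corollary~\ref{convexclosed2}: every row and every column of $\mathcal C_1$ is a contiguous segment, so the left boundary of $\mathcal C_1$ cannot indent toward $\mathcal C_1$, since such an indentation would force some column of $\mathcal C_1$ to be disconnected. Hence $\beta_1$ can fail to be geodesic only through a finite ``bump'': a maximal finite subpath along which $\beta_1$ first moves away from $\mathcal C_1$ and then returns. Using Lemma~\ref{noisolatedpoint} (no isolated vertex), Claim~\ref{noloops} together with Corollary~\ref{squareloop2} (no loop in $\partial\mathcal C_1$), and Corollary~\ref{convexclosed2} again, the bump region $\mathcal S$ must be a connected, orthogonally convex piece of $\mathcal C_1$ attached to the rest along a single flat side and sticking out away from $\mathcal C_1$. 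Deleting $\mathcal S$ gives a subgraph $\m_1$ for which, with $\om$ taken to be $\mathcal S$ together with a one-vertex collar, one checks that $\mathcal S$ contributes strictly more edges to $\partial\m\cap E_\om$ than the single attaching side contributes to $\partial\m_1\cap E_\om$, so that $|\partial\m\cap E_\om|\geq|\partial\m_1\cap E_\om|+2$, contradicting minimality of $\m$. It remains to exclude vertical backtracking of $\beta_1$; but $\beta_1$ passes through each of its rows exactly at the extreme vertex of $\mathcal C_1$ in that row, visited in monotone order by orthogonal convexity, so there is none. Therefore $\beta_1$ is geodesic, and the same argument with rows and columns (and $l_1$, $l_2$) interchanged gives that $\beta_2$ is geodesic.

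For the bound $|\beta|\leq 1$, I would first observe that $\hat\beta_1$ and $\hat\beta_2$ coincide exactly on $\beta$ and branch at $\hat w$, so any vertex $p$ of $\beta$ with $p\neq w,\hat w$ has all of its $\mathcal C_1$-neighbors on $\beta$: otherwise a ``blob'' of $\mathcal C_1$ attached at $p$ would either contain an isolated vertex or a boundary loop (excluded by Lemma~\ref{noisolatedpoint}, Claim~\ref{noloops} and Corollary~\ref{squareloop2}), or would yield a strictly-more-left simple infinite path, contradicting the choice of $\hat\beta_1$. Thus $\beta$ is an isolated path in $\m$ contained in $\partial\m$, so by Lemma~\ref{nolongline} it cannot be a geodesic of length $\geq 3$. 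To upgrade to length $\leq 1$, I would analyse the branch vertex $\hat w$, where $\beta$ arrives and the two geodesic rays $\beta_1,\beta_2$ leave in two distinct axis directions: this pins the arrival direction of $\beta$; then, applying Lemma~\ref{orientedboundary} to the oriented paths $\hat\beta_1,\hat\beta_2$ at a putative corner of $\beta$ (which would be a projective interior point), together with Corollary~\ref{convexclosed2}, a corner of $\beta$ is excluded, and the remaining possibility of a flat isolated $\beta$ of length $2$ is excluded by a perimeter comparison near $\hat w$ or by the arrangement at the base vertex $w$ (where $\deg(w)\geq 3$, with edges into both $\mathcal C_1$ and $\mathcal C_2$). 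Hence $|\beta|\leq 1$.

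The step I expect to be the main obstacle is the exclusion of a bump in $\beta_1$: this is the only place where minimality of $\m$ is used quantitatively, and the delicate point is to pin down, using Corollary~\ref{convexclosed2}, Lemma~\ref{noisolatedpoint}, Claim~\ref{noloops} and Corollary~\ref{squareloop2}, the precise shape a bump must have, and then to choose the comparison region $\om$ so that the perimeter drops by at least $2$. By contrast, excluding indentations is immediate from orthogonal convexity, and once $\beta$ is recognised as an isolated boundary path, the bound $|\beta|\leq 1$ reduces to a short case check against Lemma~\ref{nolongline}, Lemma~\ref{orientedboundary} and Corollary~\ref{squareloop2}.
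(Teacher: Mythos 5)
Your route diverges from the paper's in both halves, and each divergence leaves a gap. For geodesicity of $\beta_1$, the paper splits a reversal into four cases and kills the two ``excursion away from $\mathcal C_1$'' cases with Corollary~\ref{twocrossbd} (the line meeting the two reversing boundary edges must also meet a third edge of the path, coming either from $\beta\cup[\hat w,x_1]$ or from the fact that the projection of $\beta_1$ on $l_1$ is an infinite ray), and the two ``excursion toward $\mathcal C_1$'' cases with Corollary~\ref{convexclosed2} plus leftmost-ness. You replace the first pair by a ``delete the bump'' perimeter comparison; this is plausible but you never pin down the shape of the bump (it need not be attached along a single flat side, and its finiteness has to be argued), and your dismissal of vertical backtracking ``by orthogonal convexity'' is not a proof --- orthogonal convexity of $\mathcal C_1$ does not by itself force the leftmost simple infinite path to be vertically monotone; this is exactly the case the paper handles with Corollary~\ref{twocrossbd} and the infinite-ray projection. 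Also, an indentation does not ``disconnect a column''; the correct statement is that convexity puts the vertical segment $[x_1,x_4]$ inside $\mathcal C_1$, contradicting that $\beta_1$ is leftmost.

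The more serious gap is in the bound on the length of $\beta$. Even granting that $\beta$ is an isolated geodesic path, Lemma~\ref{nolongline} only excludes length $\geq 3$ (removing the interior of an isolated geodesic path of length $k$ changes the perimeter by $-2k+4$, which vanishes for $k=2$), so your argument stops at $L(\beta)\leq 2$. The passage from $2$ to $1$ is precisely where the specific configuration at $w$ must enter --- note that minimal subgraphs such as the one in Fig.~\ref{fig2-1} do contain isolated paths of length $2$, so nothing generic rules this case out --- and you only gesture at ``a perimeter comparison near $\hat w$ or the arrangement at the base vertex $w$''. The paper instead makes one explicit comparison: remove the part of $\mathcal C$ in the rectangle $\mathcal R'$ determined by $x,y,z,\hat w$, keeping $[x,y]$ and $\hat w$; together with Lemma~\ref{orientedboundary} and minimality this gives $|\partial\m\cap E_\om|\geq|\partial\m_1\cap E_\om|+2(L(\beta)-1)$ and hence $L(\beta)\leq 1$ in one stroke. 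You would need to supply a comparison of this kind, using the rectangle $\mathcal R(x,y,z,w)$ and $\deg(z)=2$, to close the $L(\beta)=2$ case.
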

\pf[Proof of Claim \ref{geodesicraybdy}]
If $\beta_1$ is not geodesic, then there are four cases for some non-geodesic boundary subpath $\hat w\sim\cdots\sim x_1\sim x_2\sim \cdots\sim x_3\sim x_4\subset\beta_1.$ If $x_1(x_4,\ resp.)$ is right-horizontal to $x_2(x_3,\ resp.)$, $x_4$ is up-vertical to $x_1$, then there is a boundary edge $(y_1,y_2)\subset \beta\cup\hat w\sim\cdots\sim x_1$ such that $x_1,x_4,y_1$ and $x_2,x_3,y_2$ are vertical. This contradicts Corollary \ref{twocrossbd}.

If $x_1(x_4,\ resp.)$ is down-vertical to $x_2(x_3,\ resp.)$, $x_1$ is left-horizontal to $x_4$, then by that fact that the distance projection of $\beta_1$ on $l_1$ is an infinite ray, there is an edge $(y_1,y_2)\subset \beta_1$ such that $x_1,x_4,y_1$ and $x_2,x_3,y_2$ are horizontal. This contradicts Corollary \ref{twocrossbd}.

If $x_1(x_4,\ resp.)$ is left-horizontal to $x_2(x_3,\ resp.)$, $x_4$ is up-vertical to $x_1$, then we have $\mathcal R(x_1,x_2,x_3,x_4)\subset \m$ by Corollary \ref{convexclosed2}. This contradicts that $\beta_1$ is leftmost.

If $x_1(x_4,\ resp.)$ is up-vertical to $x_2(x_3,\ resp.)$, $x_1$ is left-horizontal to $x_4$, then we have $\mathcal R(x_1,x_2,x_3,x_4)\subset \m$ by Corollary \ref{convexclosed2}. This contradicts that $\beta_1$ is leftmost.

Hence we have that $\beta_1$ is geodesic. The same argument yields that $\beta_2$ is geodesic.

Denote by $L(\beta)$ the length of $\beta$. Note that $\beta$ is geodesic by Corollary \ref{convexclosed2} and Claim \ref{noloops}. Now consider a new subgraph $\m_1$ obtained by removing the part of $\mathcal C$ in the rectangle $\mathcal R^\prime$ determined by $x,y,z,\hat w$, from $\m$ except $[x,y]$ and $\hat w$. Combining Lemma \ref{orientedboundary} with the minimality of $\m$, we have $$|\partial\m_1\cap E_\om|\geq|\partial\m\cap E_\om|\geq|\partial\m_1\cap E_\om|+2(L(\beta)-1),$$ where $\om:=\mathcal C\cap\mathcal R^\prime$. This implies $L(\beta)\leq 1.$
\pfd

\begin{claim}\label{georayenclose}
The domain $\mathcal D_1$ enclosed by $\beta_1,\beta_2$ in domain $I$ is contained in $\mathcal C_1.$ As a consequence,
we have $\mathcal C_1=\mathcal D_1\cup\beta.$
\end{claim}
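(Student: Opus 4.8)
The plan is to establish the two inclusions $\mathcal D_1\subseteq\mathcal C_1$ and $\mathcal C_1\subseteq\mathcal D_1\cup\beta$ separately; the identity $\mathcal C_1=\mathcal D_1\cup\beta$ then follows by combining them with $\beta\subseteq\de\mathcal C\subseteq\m$. As preparation, note by Claim \ref{geodesicraybdy} that $\beta_1$ and $\beta_2$ are geodesic rays and $L(\beta)\le 1$; since a geodesic in $\Z^2$ is monotone in each coordinate and, by the projection statement obtained from Corollary \ref{noboundedstrip2}, the projection of $\beta_1$ onto $l_1$ (resp.\ of $\beta_2$ onto $l_2$) is an infinite ray, the region $\mathcal D_1$ is a quadrant-type region whose two walls are $\beta_1$ and $\beta_2$ issuing from $w$ (joined near $w$ by $\beta$, of length at most one). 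I would also record at the outset that $\mathcal C_1$ is orthogonally convex: by Corollary \ref{convexclosed2} the intersection of $\mathcal C_1$ with any horizontal or vertical line is an interval, whose extreme points lie on boundary paths of $\mathcal C_1$ by Lemma \ref{noisolatedpoint}, and the endpoints facing $\hat\beta_1$ (resp.\ $\hat\beta_2$) lie on $\hat\beta_1$ (resp.\ $\hat\beta_2$) by the leftmost/rightmost choice. By Lemma \ref{nolongline}, $\beta_1$ cannot be an isolated geodesic ray, so $\mathcal C_1$ genuinely occupies a neighborhood of each wall.

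\emph{Inclusion $\mathcal D_1\subseteq\mathcal C_1$.} Suppose some vertex of $\mathcal D_1$ is not in $\mathcal C_1$, and let $H$ be the connected component of $\mathcal D_1\setminus\mathcal C_1$ containing it. Since $H$ lies strictly inside $\mathcal D_1$, whose walls belong to $\mathcal C_1$, every vertex of $\m$ adjacent to $H$ lies in $\mathcal C_1$, and any full component of $\m$ that meets $H$ is contained in $H$. If $H$ is bounded, set $\m_1:=\m\cup H$; then no new boundary edge appears near $H$ while at least one disappears, so $|\partial\m\cap E_\om|>|\partial\m_1\cap E_\om|$ for a suitable finite window $\om$, contradicting minimality — exactly the mechanism used in Lemma \ref{convexclosed} and Lemma \ref{squareloop}. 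If $H$ is unbounded, then the $\m$-vertices adjacent to $H$ carry an infinite simple boundary subpath of $\mathcal C_1$ lying strictly inside $\mathcal D_1$, hence disjoint from $\beta_1$ and $\beta_2$; using the orthogonal-convexity description of the horizontal and vertical slices of $\mathcal C_1$, this path is either trapped in a bounded strip together with $\beta_1$ or with $\beta_2$, contradicting Corollary \ref{noboundedstrip2}, or else it lies strictly to the left of $\hat\beta_1$ (resp.\ strictly to the right of $\hat\beta_2$), contradicting their extremality. Hence $\mathcal D_1\subseteq\m$; since $\mathcal D_1\cup\beta$ is a connected subgraph of $\m$ containing $w\in\mathcal C$ and lying in domain $I$, we conclude $\mathcal D_1\subseteq\mathcal C_1$.

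\emph{Inclusion $\mathcal C_1\subseteq\mathcal D_1\cup\beta$ and conclusion.} Since $\hat\beta_1=\beta\cup\beta_1$ and $\hat\beta_2=\beta\cup\beta_2$ are by construction the leftmost and rightmost simple infinite boundary paths of $\mathcal C_1$, it suffices to check that no vertex of $\mathcal C_1$ lies strictly outside the region bounded by $\hat\beta_1\cup\hat\beta_2$, which is precisely $\mathcal D_1\cup\beta$. A vertex of $\mathcal C_1$ strictly to the left of $\hat\beta_1$ would, using orthogonal convexity (Corollary \ref{convexclosed2}), Lemma \ref{noisolatedpoint}, and the absence of loops in $\de\mathcal C_1$ (Claim \ref{noloops}), give rise to a simple infinite boundary subpath of $\mathcal C_1$ strictly to the left of $\hat\beta_1$, contradicting leftmost-ness; the symmetric argument excludes vertices strictly to the right of $\hat\beta_2$. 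Combining the two inclusions yields $\mathcal C_1=\mathcal D_1\cup\beta$.

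The step I expect to be the main obstacle is the unbounded case of $H$: a bounded "hole" in $\mathcal D_1$ is dispatched immediately by the filling/minimality argument (as in Lemmas \ref{convexclosed} and \ref{squareloop}), but excluding an unbounded "bite" out of $\mathcal D_1$ requires carefully combining the orthogonal convexity of $\mathcal C_1$ (Corollary \ref{convexclosed2}) with Claim \ref{noloops}, Corollary \ref{noboundedstrip2}, and the extremality of $\hat\beta_1,\hat\beta_2$; concretely, this amounts to tracking how the left and right endpoints of the horizontal and vertical slices of $\mathcal C_1$ trace out its boundary paths and checking that they can only be $\hat\beta_1$ and $\hat\beta_2$.
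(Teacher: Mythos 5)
Your proof is correct and follows essentially the same route as the paper, which disposes of the claim by citing the argument of Lemma \ref{rigidityforonecorner} (a hole in the quadrant region forces, via Corollary \ref{convexclosed2}, two disjoint infinite boundary paths trapped in a bounded strip, contradicting Corollary \ref{noboundedstrip2}) and then excludes the degenerate case $\mathcal C_1-\beta=\beta_1\cup\beta_2$ by Lemma \ref{nolongline}, exactly as you do. The only differences are cosmetic: you split off a bounded-hole case and kill it by a direct filling/minimality argument, whereas the paper's use of the flat rays from Corollary \ref{convexclosed2} shows any hole is automatically unbounded, and you make explicit the reverse inclusion $\mathcal C_1\subseteq\mathcal D_1\cup\beta$ via the extremality of $\hat\beta_1,\hat\beta_2$, which the paper leaves implicit in its dichotomy.
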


\pf[Proof of Claim \ref{georayenclose}]
By same arguments in Lemma \ref{rigidityforonecorner}, one can show that $\mathcal{C}_1-\beta=\mathcal{D}_1\ or\ \mathcal{C}_1-\beta=\beta_1\cup\beta_2$. By definition, $\mathcal{C}_1-\beta=\beta_1\cup\beta_2$ implies that $\beta_1\cup\beta_2$ is an isolated path. It is impossible by Lemma \ref{nolongline}. Thus we prove the result.

\pfd

Similar arguments yield that $\mathcal C_2$ is the domain enclosed by two geodesic rays $\beta_1^\prime$ and $\beta_2^\prime$ from $w$, where the subpath $w\sim\cdots\sim z\sim x\subset\beta_1^\prime$, the subpath $w\sim y\subset\beta_2^\prime,$ and $\beta_1^\prime\cap\beta_2^\prime=\{w\}$.

Step 2 of Case 1. The classification of $\m$ of Case 1 is indeed as listed in Theorem \ref{main1} by Lemma \ref{orientedgeodesic}.
\begin{claim}\label{corner1}
At most one of $\beta_1$ and $\beta_2$ has corners. If $\beta_1$($\beta_2$,resp.) has corners, then $\beta_1=\hat{\beta_1}$ ($\beta_2=\hat{\beta_2}$,resp.) contains exactly two corners and the two corners are adjacent.
\end{claim}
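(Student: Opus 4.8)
The plan is to first pin down the asymptotic shape of the geodesic rays $\beta_1,\beta_2$ and then run a combined convexity/current argument. Since $\beta_1$ and $\beta_2$ are geodesic by Claim \ref{geodesicraybdy}, each is monotone in both coordinates; and by the paragraph preceding the claim the distance projection of $\beta_1$ (resp. $\beta_2$) on the vertical line $l_1$ (resp. the horizontal line $l_2$) is an infinite ray. Hence along $\beta_1$ the vertical coordinate tends monotonically to infinity, so, by monotonicity of the horizontal coordinate, either $\beta_1$ eventually becomes a straight vertical ray (finitely many corners, all near $\hat w$) or both coordinates are unbounded along $\beta_1$, which forces $\beta_1$ to alternate vertical and horizontal moves infinitely often, i.e. to be an infinite staircase; symmetrically $\beta_2$ is eventually a straight horizontal ray or an infinite staircase. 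I will also use throughout that $\de\mathcal C_1=\beta_1\cup\beta\cup\beta_2$ with $\mathcal C_1=\mathcal D_1\cup\beta$ by Claim \ref{georayenclose}, that $L(\beta)\le 1$, and that $\de\mathcal C_1$ has no loops by Claim \ref{noloops}.

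The first and main step is to exclude infinite staircases and, more generally, any ``jog'' of length $\ge 2$ or any two distinct jogs on $\beta_1$ or $\beta_2$. Suppose $\beta_1$ is such a ray. Using Corollary \ref{convexclosed2} inside $\mathcal C_1=\mathcal D_1\cup\beta$ to fill in the region on the side of $\beta_1$ on which $\mathcal D_1$ lies, Lemma \ref{orientedboundary} at the projective interior points to control which neighbors can lie in $\m$, and the current computation underlying Lemma \ref{orientedgeodesic} applied to the straight portions of $\de\mathcal C_1$, one propagates the picture far from $\hat w$ and obtains an enlarged minimal subgraph $\m'=\m\cup\mathcal D$ (minimal by Corollary \ref{minimalcurrent}) whose boundary contains one of the forbidden configurations: two parallel horizontal or vertical rays, contradicting Lemma \ref{noboundedstrip}; two disjoint infinite simple boundary paths trapped in a bounded strip, contradicting Corollary \ref{noboundedstrip2}; or a straight boundary segment of length $\ge 3$ which, once its edges are checked to be boundary edges via Corollary \ref{twocrossbd}, is an isolated geodesic path, contradicting Lemma \ref{nolongline}. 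Concretely, an infinite staircase on $\beta_1$ makes $\mathcal D_1$ contain a horizontal sub-ray at every sufficiently high level (by Corollary \ref{convexclosed2}), which in turn forces $\beta_2$ to be a straight horizontal ray, and then suitable sub-rays of $\beta_1,\beta_2$ lie in a bounded strip; a long jog is eliminated along the same lines. Hence each of $\beta_1,\beta_2$ carries at most one jog, and that jog, if present, has length exactly $1$, i.e. consists of two adjacent corners. Finally, if $\beta_1$ carries such a jog, then the boundary triple at $\hat w$ (the $\beta$-edge together with the first edges of $\beta_1$ and $\beta_2$) together with the jog is forbidden by Lemma \ref{digonaldistrubution} unless $\beta$ is trivial, so $\hat w=w$ and $\beta_1=\hat\beta_1$; symmetrically for $\beta_2$.

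It remains to show that $\beta_1$ and $\beta_2$ cannot both carry a (length-one) jog, which I would handle exactly in the spirit of the proof of Corollary \ref{twoboundary}. If both do, then, reading off the currents forced around the jog of $\beta_1$ and around the jog of $\beta_2$ via the equation (\ref{oneLaplacian}) — at each convex corner the current flows inward along both incident boundary edges with magnitude $1$ — and using Corollary \ref{convexclosed2} to see that the region $\mathcal D_1$ between the two rays is genuinely filled, one traces both forced current fields into $\mathcal D_1$ and finds a vertex at which the two determinations are incompatible, so $0\notin\Delta_1(1_\m)$, contradicting Corollary \ref{minimalcurrent}. The hard part will be precisely this incompatibility analysis — establishing it as an actual algebraic contradiction rather than an apparent one in a schematic picture — together with the bookkeeping over the symmetry types of the configuration: whether $\beta_1$ runs up or down, whether $\beta_2$ runs left or right, and on which side of each ray the region $\mathcal D_1$ lies.
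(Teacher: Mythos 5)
Your plan for the mutual-exclusion part (both $\beta_1$ and $\beta_2$ having corners) is essentially the paper's: a corner forces inflow of magnitude $1$ along both incident boundary edges, and this propagates along the geodesic rays. But you stop exactly where the proof actually lives. The paper identifies the vertex of incompatibility concretely: the two propagated currents arrive at $w$ along the first edges of $\hat\beta_1$ and $\hat\beta_2$, and together with the forced outward current at $w$ (which has a neighbor outside $\m$) one gets outflow at least $3-1=2>0$, so $\Delta_1(1_\m)(w)\neq 0$ (Fig.~\ref{figmainthm5}). The same propagation, run back to $w$ or to the intermediate vertices $w_2,w_3$, is what gives ``at most two corners'' and ``the two corners are adjacent'' (Figs.~\ref{figmainthm5}--\ref{figmainthm7}). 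Deferring this as ``the hard part'' leaves the claim unproved; the local bookkeeping at $w$ is the entire content.

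More seriously, your first step --- excluding infinite staircases and long or repeated jogs by Lemma~\ref{noboundedstrip}, Corollary~\ref{noboundedstrip2} and Lemma~\ref{nolongline} --- does not work. A ray with infinitely many corners, or with several well-separated unit jogs, is unbounded in both coordinates, so it is not contained in any strip isometric to $[0,d]\times[0,\infty)$, is not parallel to any straight ray, and contains no isolated straight subpath of length $\geq 3$; none of the cited results applies to it. Nor can any argument localized far from $w$ succeed: a bi-infinite staircase genuinely bounds a minimal subgraph (this is exactly the case of Fig.~\ref{figorientedgeodesic4} in Lemma~\ref{orientedgeodesic}), so the configuration you are trying to exclude is locally admissible everywhere along $\beta_1$. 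What kills three or more corners, and non-adjacent corners, is only the interaction with the non-geodesic defect at $w$, detected by propagating the corner-forced currents back to $w$ (or $w_2$, $w_3$) --- i.e., the same current argument you postponed. Your appeal to Lemma~\ref{digonaldistrubution} for $\beta_1=\hat\beta_1$ is a plausible alternative to the paper's current computation for that sub-claim, but it does not rescue the corner-count step.
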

\pf[Proof of Claim \ref{corner1}]
We argue via Corollary \ref{minimalcurrent} and the equation (\ref{oneLaplacian}). We give only some arguments and the remaining are similar. If both $\beta_1$ and $\beta_2$ have corners, then the current flowing out of $w\geq 3-1=2>0$; see the first picture in Fig. \ref{figmainthm5} for $L(\beta)=0$. This is a contradiction by Corollary \ref{minimalcurrent}. Note that if $\beta_1$ has corners, then it has at least two corners by Lemma \ref{noboundedstrip}. On the other hand, $\beta_1$ has at most two corners by Corollary \ref{minimalcurrent}; see Fig. \ref{figmainthm6}. The remaining claims are easy to verify by Corollary \ref{minimalcurrent}; see the last two pictures in Fig. \ref{figmainthm5} and Fig. \ref{figmainthm7}.
\begin{figure}[htbp]
\centering
\begin{minipage}{\linewidth}
\centering
       \includegraphics[height=0.3\linewidth,width=\linewidth]{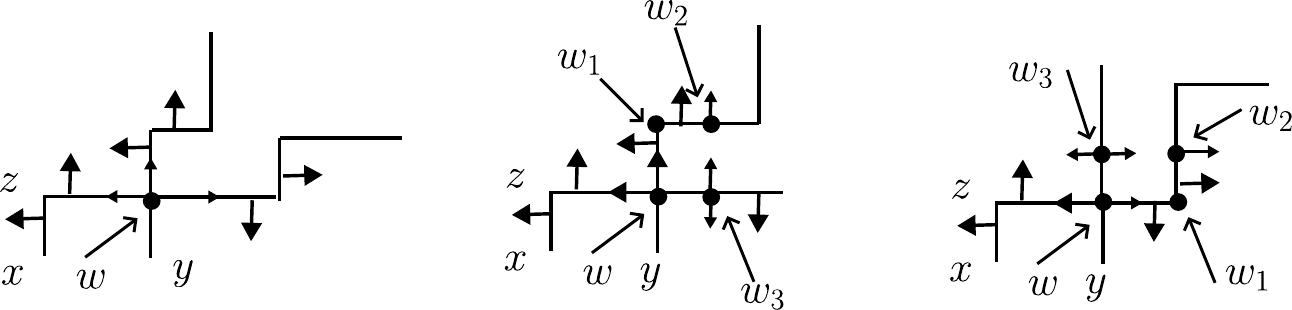}
  \caption{
  \small
The second picture shows $\Delta_1(1_{\m})(w)\neq 0$ or $\Delta_1(1_{\m})(w_3)\neq 0$ if two corners of $\beta_1$ are not adjacent.  }
 \label{figmainthm5}
\end{minipage}
\end{figure}

\begin{figure}[htbp]
\centering
\begin{minipage}{\linewidth}
\centering
       \includegraphics[height=0.23\linewidth,width=\linewidth]{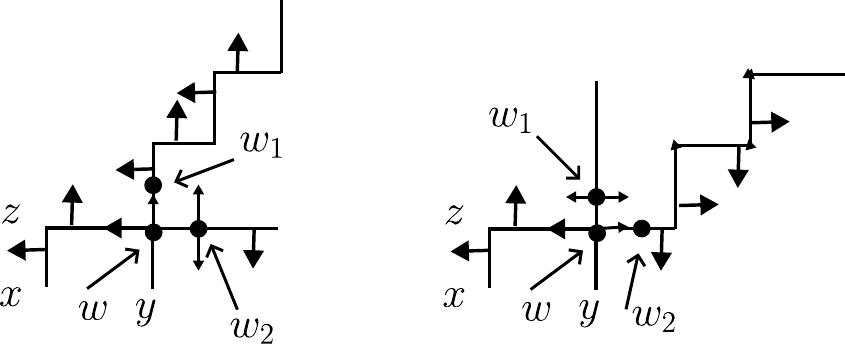}
  \caption{
  \small
The first picture shows $\Delta_1(1_{\m})(w)\neq 0$ or $\Delta_1(1_{\m})(w_2)\neq 0$ if $\beta_1$ has more than two corners.  }
 \label{figmainthm6}
\end{minipage}
\end{figure}

\begin{figure}[htbp]
\centering
\begin{minipage}{\linewidth}
\centering
       \includegraphics[height=0.23\linewidth,width=\linewidth]{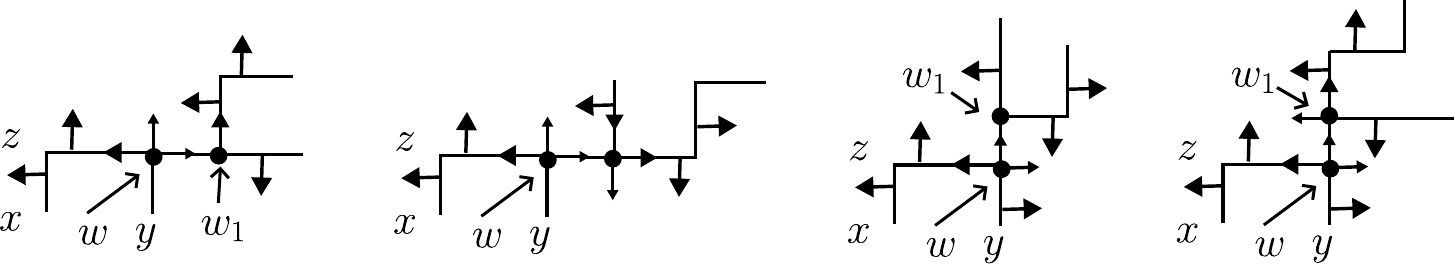}
  \caption{
  \small
 }
 \label{figmainthm7}
\end{minipage}
\end{figure}
\pfd

\begin{claim}\label{corner2}
For $\beta_1^\prime$ and $\beta_2^\prime$, denote by $\mathcal{R}_1,\mathcal{R}_2$ the rectangles containing $w$ and $p_1$, $w$ and $p_1$ respectively, and let $l_1$ and $h_1$, $l_2$ and $h_2$ represent sides of $\mathcal{R}_1,\mathcal{R}_2$ respectively.
Let $d_1,d_2$ be the length of $h_1,h_2$ respectively; see the graph $(a)$ in Fig. \ref{figmainthm8}. Then we have that $d_1\leq 2,d_2\leq 1.$ In particular,
$\beta_1^\prime$ ($\beta_2^\prime$,resp.) has at most four (two,resp.) corners.
If $\beta_2^\prime$ has two corners, then two corners in $\beta_1^\prime$ and $\beta_2^\prime$ are both adjacent.
\end{claim}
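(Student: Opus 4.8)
The plan is to argue exactly as in Claim \ref{corner1}, that is, purely through the current equation $0\in\Delta_1(1_{\m})$ of Corollary \ref{minimalcurrent} and the formula \eqref{oneLaplacian}, the only new feature being the additional slack available on the $\beta_1'$-side because of the filled rectangle $\mathcal R(x,y,z,w)$. So I would first fix a minimal current $a\in\mathcal C_V(1_{\m})$: it obeys $a_{uv}=1$ on every edge with $u\in\m$ and $v\in\m^c$, $a_{uv}\in[-1,1]$ on all other edges, and $\sum_{v\sim u}a_{uv}=0$ at each vertex $u$. Two pointwise facts are used throughout. First, each $u\in\m$ has at most two neighbours in $\m^c$, and when it has exactly two, both edges from $u$ into $\m$ must carry the value $-1$. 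Second, at a flat boundary vertex of a filled region (one neighbour in $\m^c$, three in $\m$) the three $\m$-edges sum to $-1$; propagating this relation along a flat boundary segment determines the current there up to the value it carries into the region, cf.\ the arrow patterns of Fig.\ \ref{figorientedgeodesic1}--\ref{figorientedgeodesic4}. By Corollary \ref{convexclosed2} the region $\mathcal C_2$ enclosed by $\beta_1'\cup\beta_2'$ is filled and the rectangle $\mathcal R(x,y,z,w)$ lies in $\m$; since $[x,y]$ is not a boundary edge, $\m$ also occupies the row immediately below $[x,y]$, and recall that $\deg(z)=2$, $\deg(w)\ge 3$.

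Next I would localize near $w$. The balance $\sum_{v\sim w}a_{wv}=0$, together with the facts that the neighbour of $w$ along $[z,w]$, the neighbour $y$ starting $\beta_2'$, and the first edges of the rays bounding $\mathcal C_1$ are all in $\m$, pins the current on the first edge of $\beta_1'$ and of $\beta_2'$; propagating this along the flat legs fixes the current up to and just past the first corners $p_1$ and $p_2$. At $p_2$ the current arriving along $\beta_2'$, plus the $+1$'s on the cut edges met by the staircase of $\beta_2'$ before $p_2$, must cancel while each $\m$-edge absorbs at most $1$ in absolute value; this forces the length $d_2$ of $h_2$ to be at most $1$, and iterating the balance (and the propagation) at the subsequent corners shows $\beta_2'$ has at most two corners. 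On the $\beta_1'$-side the bookkeeping is the same, except that the filled strip of width $2$ below the initial horizontal leg $[z,w]$ (the $[x,y]$-row of $\mathcal R(x,y,z,w)$ together with the row just below it) supplies one extra unit of current that can be rerouted, relaxing the bounds to $d_1\le 2$ and to at most four corners for $\beta_1'$; here $\deg(z)=2$ is used to exclude further neighbours of $z$ that would spoil the count. Finally the adjacency statement follows as in Claim \ref{corner1}: if the two corners of $\beta_2'$ were not consecutive, the divergence equation would fail at the flat vertex between them (the configuration of the second picture in Fig.\ \ref{figmainthm5}/\ref{figmainthm6}), and once these corners are forced adjacent the current returned to $w$ along $\beta_2'$ is completely determined, which in turn pins the corresponding corners of $\beta_1'$ to be adjacent as well.

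The main obstacle is the local case analysis around $p_1$ and $p_2$. One has to enumerate the admissible monotone staircase shapes of $\beta_1'$ and $\beta_2'$ issuing from $w$ --- for $\beta_1'$ the initial leg through $z$ and $x$ is horizontal-then-vertical, for $\beta_2'$ it begins with the single edge $w\sim y$ --- use Corollary \ref{twocrossbd} and Lemma \ref{orientedboundary} to discard the degenerate local pictures, confirm that $\mathcal R(x,y,z,w)$ really contributes exactly one extra unit of slack on the $\beta_1'$-side and none on the $\beta_2'$-side, and then verify the divergence equation vertex by vertex along each staircase, invoking Corollary \ref{minimalcurrent} for a contradiction whenever a local current balance is impossible. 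This is routine but delicate, and is most cleanly organised by drawing the forced currents as in Fig.\ \ref{figmainthm8} and Fig.\ \ref{figorientedgeodesic1}--\ref{figorientedgeodesic4}.
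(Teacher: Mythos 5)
Your proposal is correct and follows essentially the same route as the paper: both arguments fix a minimal current via Corollary \ref{minimalcurrent}, propagate the forced values along the flat legs of $\beta_1'$ and $\beta_2'$ issuing from $w$, and derive a violation of the divergence equation $0\in\Delta_1(1_{\m})$ at a specific vertex when $d_1\geq 3$ or $d_2\geq 2$, with the adjacency statement handled as in Claim \ref{corner1}. The only difference is bookkeeping --- the paper locates the contradictions at the vertices $y,y_1$ and $w_1$ near $w$ (configurations $(b)$ and $(c)$ of Fig.\ \ref{figmainthm8}) rather than at the corners $p_1,p_2$ --- which does not change the method.
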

\pf[Proof of Claim \ref{corner2}]
\begin{figure}[htbp]
\centering
\begin{minipage}{\linewidth}
\centering
       \includegraphics[height=0.32\linewidth,width=\linewidth]{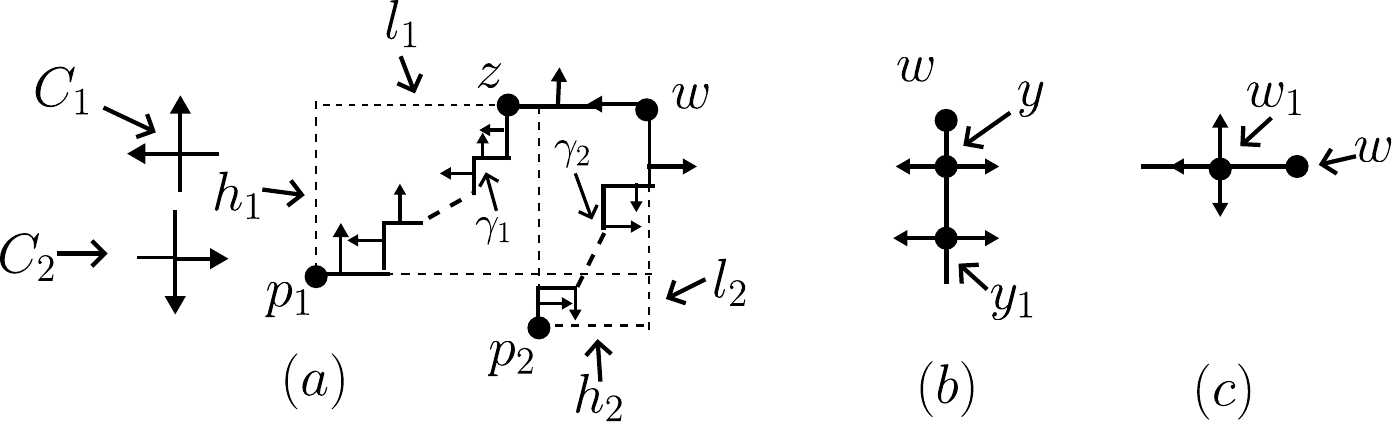}
  \caption{
  \small In $(a)$: The oriented paths $\gamma_1,\gamma_2$ from $w$ to $p_1,p_2$ are finite subpaths in $\beta_1^\prime,\beta_2^\prime$ respectively. $C_1,C_2$ denote the currents determined by the oriented paths $\gamma_1,\gamma_2$ in the interiors of the rectangles containing $w$ and $p_1$, $w$ and $p_2$ respectively. In $(b)$: $w\sim y\sim y_1$. In $(c)$: $w\sim w_1$ and $w_1\in [z,w]$.}
 \label{figmainthm8}
\end{minipage}
\end{figure}


If $d_1\geq 3$ , then this yields $(b)$ in Fig. \ref{figmainthm8} by the currents indicated by $C_1$ in Fig. \ref{figmainthm8} and Corollary \ref{minimalcurrent}. This implies that $\Delta_1(1_{\m})(y)\neq 0$ or $\Delta_1(1_{\m})(y_1)\neq 0$, which is impossible by Corollary \ref{minimalcurrent}. Similarly, if $d_2\geq 2$, then we have $(c)$ in Fig. \ref{figmainthm8} by the currents indicated by $C_2$ in Fig. \ref{figmainthm8}. This  implies that $\Delta_1(1_{\m})(w_1)\neq 0$, which is impossible by Corollary \ref{minimalcurrent}.
\pfd

\begin{claim}\label{nongeodesicconnected}
$\m$ is connected.
\end{claim}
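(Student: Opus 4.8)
The plan is to suppose that $\m$ has a connected component $\mathcal C'$ disjoint from $\mathcal C$ and derive a contradiction, treating the finite and infinite cases separately.

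If $\mathcal C'$ is finite, I would take $\om$ to be a large ball containing $\mathcal C'$ and set $\m_1:=\m\setminus\mathcal C'$: every edge joining $\mathcal C'$ to $\m^c$ lies in $\pa\m\cap E_\om$ but not in $\pa\m_1\cap E_\om$, and no new boundary edge appears, so $|\pa\m\cap E_\om|>|\pa\m_1\cap E_\om|$, contradicting minimality. (Alternatively: by Corollary \ref{convexclosed2} and Lemma \ref{noisolatedpoint} a finite component is enclosed by a simple boundary loop, which by Lemma \ref{squareloop} is a unit square, so $\mathcal C'$ is a single unit square; but a unit square is not minimal, since for a current $a\in\mathcal C_V(1_\m)$ the balance $\sum_{u\sim x}a_{xu}=0$ at the four vertices of the square --- using $a_{xu}=1$ on the two edges leaving the square at each vertex --- forces $a_{v_1v_2}=-1=a_{v_2v_1}$, violating antisymmetry, so $0\notin\Delta_1(1_\m)$, contradicting Corollary \ref{minimalcurrent}.)

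The main case is $\mathcal C'$ infinite, and here I would exploit the structure of $\mathcal C$ obtained above. By Claim \ref{corner1}, Claim \ref{corner2} and Claim \ref{georayenclose}, each of $\beta_1,\beta_2,\beta_1',\beta_2'$ is a geodesic ray with finitely many corners, hence eventually an axis-parallel ray; together with $\deg(z)=2$ (which pins the far end of $\beta_1'$), this shows that outside a bounded set $\mathcal C=\mathcal C_1\cup\beta\cup\mathcal C_2$ coincides with the union of two axis-aligned quadrant regions (in domains $I$ and $II$), so $\mathcal C^c$ coincides, outside a bounded set, with a disjoint union of axis-aligned quadrant regions; in particular each such piece is bounded in two coordinate directions. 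Since $\mathcal C'\subset\mathcal C^c$ is infinite and orthogonally convex (Corollary \ref{convexclosed2}), after discarding a bounded part it lies in one such complementary quadrant and is unbounded in one or two coordinate directions. In either case I would extract from $\de\mathcal C'\subset\de\m$ two disjoint infinite simple boundary paths contained in an unbounded strip of finite width --- when $\mathcal C'$ is unbounded in a single direction these are just its two sides inside a half-strip; when it is unbounded in two directions, confinement to a quadrant forces one side of $\mathcal C'$ to be an eventually flat boundary ray, which together with the facing flat boundary ray of $\mathcal C$ bounds such a strip. This contradicts Corollary \ref{noboundedstrip2} (equivalently Lemma \ref{noboundedstrip}). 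Hence no such $\mathcal C'$ exists and $\m=\mathcal C$ is connected.

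I expect the bookkeeping in the infinite case to be the hard part: pinning down $\mathcal C^c$ up to a bounded set as a union of axis-aligned quadrants, and then exhibiting the finite-width strip in every subcase for the shape and location of $\mathcal C'$, particularly the configurations close to $w$. A cleaner but comparably computational alternative would argue entirely via Corollary \ref{minimalcurrent}: for each of the finitely many shapes of $\mathcal C$ permitted by Claims \ref{corner1}--\ref{corner2}, the minimal current drawn in Fig. \ref{figmainthm5}, Fig. \ref{figmainthm7} and Fig. \ref{figmainthm8} is the unique current in $\mathcal C_V(1_\m)$ of vanishing divergence and it occupies all of $\Z^2$, so every vertex outside $\mathcal C$ carries only exterior current values on its incident edges and cannot lie in $\m$.
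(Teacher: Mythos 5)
Your finite case is fine (and is in fact redundant: each connected component of $\m$ is itself minimal by restricting a minimal current, so Lemma \ref{infinitybdy} already forces every component to be infinite). The real issue is the infinite case, where your route diverges from the paper's. The paper does not try to pin down $\mathcal C^c$ geometrically; it first argues that $\de\hat{\mathcal C}$ must be geodesic (a non-geodesic boundary would, by Claims \ref{corner1} and \ref{corner2}, force $\hat{\mathcal C}$ to contain two diagonally placed quadrants and hence meet $\mathcal C$), extracts an oriented boundary geodesic line $\gamma\subset\de\hat{\mathcal C}$ via Lemmas \ref{noisolatedpoint} and \ref{infinitybdy}, and then invokes Corollary \ref{twoboundary} for the pair $\gamma$ and $\beta_1'\cup\hat\beta_1$ (or $\gamma$ and $\beta_2'\cup\hat\beta_2$) before concluding with Corollary \ref{noboundedstrip2}.

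The concrete gap in your argument is the step ``confinement to a quadrant forces one side of $\mathcal C'$ to be an eventually flat boundary ray.'' A component $\mathcal C'$ sitting in a complementary sector and unbounded in two coordinate directions could a priori be bounded by two geodesic staircase paths with infinitely many corners (the type of Fig.~\ref{figorientedgeodesic4}); such a region is orthogonally convex, consistent with Corollary \ref{convexclosed2}, has no eventually flat boundary ray, and if the two staircases diverge it is not contained in any finite-width strip, so neither your strip extraction nor a direct appeal to Corollary \ref{noboundedstrip2} applies. This is precisely the configuration that Corollary \ref{twoboundary} is designed to kill: its Case 2 rules out a staircase line in a second component not by a strip/perimeter comparison but by showing the currents it determines are inconsistent with those determined by the boundary line of $\mathcal C$, i.e.\ $0\notin\Delta_1(1_\m)$. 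Without that (or an equivalent current argument) your case analysis is incomplete. Relatedly, your proposed ``cleaner alternative'' leans on uniqueness of the divergence-free current in $\mathcal C_V(1_\m)$, which is false in general --- the paper itself notes that currents along certain lines ``can be arbitrarily given whenever they are of sum zero'' --- so that route would need to be reformulated in terms of the partially determined currents of Lemma \ref{orientedgeodesic} rather than uniqueness.
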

\pf[Proof of Claim \ref{nongeodesicconnected}]
If not, let $\hat{\mathcal{C}}$ be another connected component of $\m$ and $\hat{\mathcal{C}}\cap\mathcal{C}=\emptyset.$ Then $\de\hat{\mathcal{C}}$ is geodesic. Otherwise, $\hat{\mathcal{C}}$ contains two quadrants located diagonally such that $\hat{\mathcal{C}}\cap\mathcal{C}\neq\emptyset$ by Claim \ref{corner1} and Claim \ref{corner2}, which is impossible.
So that one can find a geodesic line $\gamma$ in $\de\hat{\mathcal{C}}$ by Lemma \ref{noisolatedpoint} and Lemma \ref{infinitybdy}. Recalling Corollary \ref{twoboundary} for $\gamma$ and $\beta_1^\prime\cup\hat\beta_1$ or $\gamma$ and $\beta_2^\prime\cup\hat\beta_2$, we have two disjoint geodesic rays in bounded infinite strip, which contradicts Corollary \ref{noboundedstrip2}.
\pfd

Thus, $\m$ must be one of the forms in (\ref{main1-1}) of Theorem \ref{main1} by Claim \ref{geodesicraybdy}, Claim \ref{corner1}, Claim \ref{corner2} and Claim \ref{nongeodesicconnected}. On the other hand, one can use the argument of currents to show that these forms in (\ref{main1-1}) of Theorem \ref{main1} are indeed minimal by Corollary \ref{minimalcurrent}.

Case 2. $\de\m$ is geodesic and simple.

Subcase 2-1 The number of connected components of $\m$ is one, i.e. $\m$ is connected.

Subcase 2-1-1 If $\de\m$ is connected, then one easily deduces that $\m$ is one of the forms in Fig. \ref{fig3-1-1}-Fig. \ref{fig3-1-5} by Corollary \ref{minimalcurrent} and the condition that $\de\m$ is geodesic and simple.

Subcase 2-1-2 If $\de\m$ is not connected, then recalling the condition of Case 2, we obtain that $\de\m$ has exactly two geodesic and simple connected components $\beta_1,\beta_2$ by Lemma \ref{nomoretwocomponents}. These yield that $\beta_1,\beta_2$ are oriented and of the form in Fig. \ref{figorientedgeodesic2} by Corollary \ref{twoboundary}.

Subcase 2-1-2-1 If $\m$ is of the form in Fig. \ref{fig3-2-1}, then we have the following claim.
\begin{claim}\label{3-2-1}
In this case, $\m$ is minimal if and only if $d\leq h+2.$
\end{claim}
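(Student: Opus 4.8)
\textbf{Proof proposal for Claim \ref{3-2-1}.}

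The plan is to use the current characterization of minimality, Corollary \ref{minimalcurrent}, namely that $\m$ is minimal if and only if $0\in\Delta_1(1_{\m})$, i.e. there is an antisymmetric function $a$ on the edges of $\Z^2$ with $a_{xy}\in\mathrm{Sgn}(1_{\m}(x)-1_{\m}(y))$ and $\sum_{y\sim x}a_{xy}=0$ for every vertex $x$. The configuration in Fig.~\ref{fig3-2-1} is the region bounded by two oriented geodesic lines $\beta_1,\beta_2$ of the form in Fig.~\ref{figorientedgeodesic2}, each consisting of a horizontal ray and a vertical ray meeting at a single corner, arranged so that the two corners are offset by horizontal distance $d$ and vertical distance $h$ (up to isomorphism). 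On every boundary edge of $\m$ the value of $a$ is forced to be $\pm1$; the only freedom lies on the edges in the interior of the two quadrant regions cut off by the corners, and on $\beta_i$ itself the orientation forces a prescribed inflow/outflow pattern (the ``currents determined by $\alpha$'' as in Lemma~\ref{orientedgeodesic} and Figures~\ref{figmainthm5}--\ref{figmainthm8}).

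First I would fix the orientations: along each flat portion of $\beta_i$ the current is $\pm1$ pointing ``into'' $\m$, and at each corner the two incident forced edges contribute a net flow that must be cancelled by the interior edges of the corresponding quadrant. Propagating the divergence-free condition inward from each corner, one sees that the interior currents in each quadrant are uniquely determined (as in the figures of Lemma~\ref{orientedgeodesic}): each quadrant contributes a ``diagonal'' layer of $\pm1$ currents, and the constraint $|a_{xy}|\le1$ on these interior edges is automatically satisfied \emph{except} possibly at the vertices lying on the strip of width $\min(d,h\text{-dependent quantity})$ between the two corners, where the currents forced from $\beta_1$ and the currents forced from $\beta_2$ overlap. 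The key computation is to add up, at such an overlap vertex $x$, the four incident forced or semi-forced currents: the balance equation $\sum_{y\sim x}a_{xy}=0$ together with the sign constraints becomes solvable precisely when the ``horizontal budget'' $d$ does not exceed the ``vertical budget,'' which in this normalization is $h+2$. Concretely, when $d\le h+2$ one exhibits the explicit current (the overlapping diagonal layers fit together, possibly using a value in $(-1,1)$ on one shared edge), and when $d\ge h+3$ one checks that at one particular vertex near the corner the forced values on three incident edges already sum to something of absolute value $\ge2$, so no admissible $a_{xy}\in[-1,1]$ on the fourth edge can restore balance; this is exactly the mechanism illustrated in the second picture of Fig.~\ref{figmainthm5} and in Fig.~\ref{figmainthm8}.

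So the steps are: (i) write down $\m$ precisely in coordinates in terms of $d$ and $h$; (ii) record which edges are boundary edges and hence have $a$ forced to $\pm1$, and fix the orientation so these point into $\m$; (iii) solve the divergence-free recursion in each of the two quadrant regions to get the interior currents, noting they are unique; (iv) in the overlap strip between the two corners, impose $\sum_{y\sim x}a_{xy}=0$ and $|a_{xy}|\le1$ and show this system is consistent iff $d\le h+2$ — for the ``if'' direction give the explicit current, for the ``only if'' direction exhibit the offending vertex. The main obstacle, and the only place real work is needed, is step (iv): one must carefully track the bookkeeping of how far the two diagonal current-layers penetrate and verify that they can be glued along their common boundary exactly under the stated inequality; everything else is forced. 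I would present this with a labelled figure showing the current values in a representative small case (say $h=1$), since the general case is the same pattern shifted, and appeal to Corollary~\ref{minimalcurrent} to convert ``admissible current exists'' into ``$\m$ is minimal.''
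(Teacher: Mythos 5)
Your plan is viable but diverges from the paper in a significant way, and one of its load-bearing assertions is overstated. For the ``only if'' direction the paper does not use currents at all: it simply removes the $h\times d$ rectangle $R(x,y,z,w)$ joining the two quadrants and counts the change in boundary edges, $2(h+1)-2(d-1)$, which minimality forces to be nonnegative, giving $d\leq h+2$ in two lines. Your version instead tries to show that no admissible current exists when $d\geq h+3$ by locating a vertex where forced currents over-accumulate. That mechanism is legitimate and is indeed how the paper argues in Claims \ref{corner1} and \ref{corner2}, but it only works if the three offending edge values really are \emph{forced}, and your justification for that --- ``the interior currents in each quadrant are uniquely determined'' --- is not true as stated: the paper's own Figure \ref{figmainthm10} points out that along certain lines the currents are only constrained to sum to zero and can otherwise be chosen freely (e.g.\ all zero when $d=0$). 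To rule out minimality you must show \emph{every} admissible current fails, so you would need to verify that the propagation from the oriented boundary (as in Lemma \ref{orientedgeodesic}) deterministically reaches your obstruction vertex; this is exactly the bookkeeping you defer to step (iv). The competitor argument sidesteps all of this and I would strongly recommend it for that direction. For the ``if'' direction your approach matches the paper's in spirit --- exhibit an explicit current and invoke Corollary \ref{minimalcurrent} --- except that the paper organizes the construction as an induction on $h$ (base case $h=0$ with $d\leq 2$ checked directly, inductive step by reversing the currents along the geodesic line $\alpha_1'\cup\alpha_2'$ to reduce to $h-1$, $d-1$), whereas you propose gluing the two diagonal current layers directly in the overlap strip. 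Either works; the induction has the advantage that the consistency check is confined to a single small configuration. As written, your step (iv) is where the entire content of the claim lives and it is not carried out, so the proposal is a sound program rather than a proof; the two concrete repairs needed are to replace the ``only if'' half by the rectangle-removal count (or else prove the forcedness of the currents at your obstruction vertex) and to actually exhibit the current in the ``if'' half.
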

\pf[Proof of Claim \ref{3-2-1}]
\begin{figure}[htbp]
\centering
\begin{minipage}{\linewidth}
\centering
       \includegraphics[height=0.42\linewidth,width=0.650\linewidth]{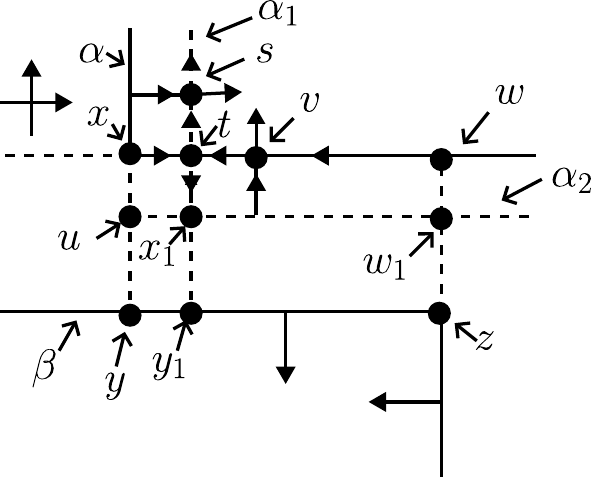}
  \caption{
  \small This connected subgraph is bounded by two geodesic lines $\alpha,\beta$. $h=d(x,y)=d(z,w),d=d(x,w)=d(y,z)$. $R(x,u,x_1,t)$ is an unit square. $s\sim t$. $\alpha_1\supset[x_1,t]$ ($\alpha_2\supset[x_1,w_1]$, resp.) denotes the vertical (horizontal,resp.) ray from $x_1$.
}
 \label{figmainthm9}
\end{minipage}
\end{figure}
Since $\m$ is of the form in Fig. \ref{fig3-2-1}, $\m$ is minimal if and only if the new subgraph $\m_1$ in Fig. \ref{figmainthm9} is also minimal by Lemma \ref{orientedgeodesic}. If we remove the rectangle $R(x,y,z,w)$ to get another new graph $\m_1^\prime$, then we have $$|\partial\m_1^\prime\cap E_\om|=|\partial\m_1\cap E_\om|+2(h+1)-2(d-1)\geq|\partial\m_1\cap E_\om|,$$ where $\om:=R(x,y,z,w)$. This implies that $d\leq h+2.$

On the other hand, we shall prove that $\m_1$ is minimal if $d\leq h+2.$ We argue it by induction on $h$. If $h=0$, then we get the result by Corollary \ref{minimalcurrent}; see Fig. \ref{figmainthm10}. To be precise,
\begin{figure}[htbp]
\centering
\begin{minipage}{\linewidth}
\centering
       \includegraphics[height=0.36\linewidth,width=\linewidth]{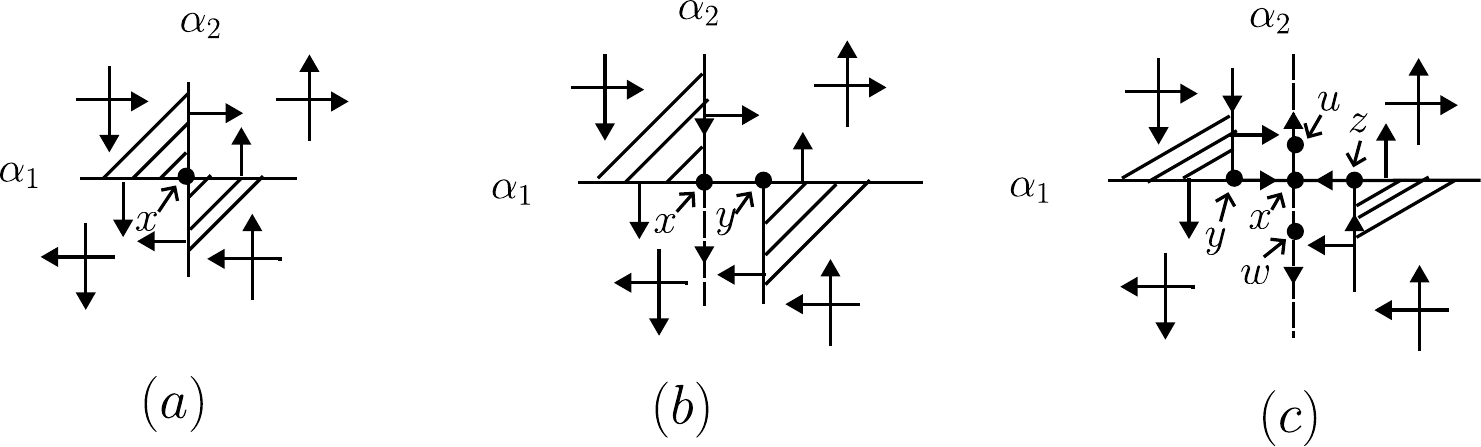}
  \caption{\small $\alpha_1$($\alpha_2$,resp.) are horizontal(vertical,resp.) lines through $x$ and they divide the plane into four quadrant regions. The crossed arrows denote the currents in the interiors of respective quadrant regions.
}
 \label{figmainthm10}
\end{minipage}
\end{figure}
for $d=0$, the currents of $\alpha_1,\alpha_2$ can be arbitrarily given (for example, zero currents) whenever they are of sum zero along respective lines; see $(a)$ in Fig. \ref{figmainthm10}. For $d=1,$ the currents of $\alpha_1$ can be arbitrarily given whenever they are of sum zero along respective lines. The currents of $\alpha_2$ are indicated in Fig. \ref{figmainthm10}; see $(b)$ in Fig. \ref{figmainthm10}. For $d=2,$ one can see $(c)$ in Fig. \ref{figmainthm10} to find that the minimal currents of $\alpha_1,\alpha_2$ are indicated.

Now let the current from $t$ to $x_1$ be as in Fig. \ref{figmainthm9}. Then we obtain the currents in Fig. \ref{figmainthm9} by Corollary \ref{minimalcurrent} and the currents in the interior of the upper half plane bounded by the horizontal line through $x$ and $t$, which are indicated by crossed arrows in Fig. \ref{figmainthm9}. Denote by $\alpha_1^\prime$ the vertical ray from $t$ in $\alpha_1$, and $\alpha_2^\prime$ the horizontal ray from $t$ through $w$.  Then one gets a new connected graph $\m_2$ bounded by $\alpha_1\cup\alpha_2$ and $\beta$; see Fig. \ref{figmainthm9}. Note that $d(x_1,y_1)=h-1,d(x_1,w_1)=d-1$, and one can reverse the currents of the geodesic line $\alpha_1^\prime\cup\alpha_2^\prime$. One can show that this preserves that $0\in\Delta_1(1_{\m_2})$. Thus, by applying induction assumption we finish the proof.

\pfd
Subcase 2-1-2-2 If $\m$ is of the form in Fig. \ref{fig3-2-2}, then $\m$ is minimal if and only if the new subgraph $\m_1$ in Fig. \ref{figmainthm11} is minimal by Lemma \ref{orientedgeodesic}. Now we show that $\m_1$ in Fig. \ref{figmainthm11} is indeed minimal by Corollary \ref{minimalcurrent}. More precisely, $\alpha,\beta$ divide the plane into three regions $\mathcal{D}_1,\mathcal{D}_2$ and $\m_1$; see Fig. \ref{figmainthm11}. The currents in the interiors of $\mathcal{D}_1,\mathcal{D}_2$ are indicated as the crossed arrows in Fig. \ref{figmainthm11}, and the currents for the horizontal lines or vertical lines through the rectangle $R(x,y,z,w)$ in Fig. \ref{figmainthm11} only need to be of sum zero along these lines. Hence, this yields that $0\in\Delta_1(1_{\m_1})$.
\begin{figure}[htbp]
\centering
\begin{minipage}{0.8\linewidth}
\centering
       \includegraphics[height=0.42\linewidth,width=0.750\linewidth]{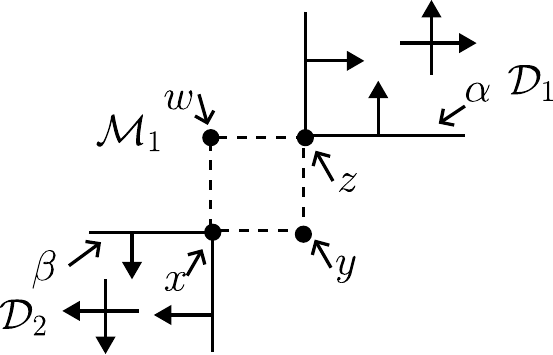}
  \caption{\small $\m_1$ is a connected subgraph bounded by two geodesic lines $\alpha$ and $\beta$.
}
 \label{figmainthm11}
\end{minipage}
\end{figure}

Subcase 2-2 The number of connected components of $\m$ is two.

One can show that $\de\m$ has exactly two connected components by Lemma \ref{nomoretwocomponents}. Recalling that $\de\m$ is geodesic and simple, one obtains that the complementary subgraph $\m^c$ of $\m$ is connected. Lemma \ref{complementaryminimal} yields that $\m^c$ is also minimal.

Case 3. $\de\m$ is geodesic and non-simple.

Subcase 3-1  $\de\m$ contains some loops. Then one of connected components of $\m$ contains exactly one unit square by Corollary \ref{squareloop2}.


We may assume that there is a square $R(x,y,z,w)\subset\de\m\cap\mathcal{C},$ where $\mathcal{C}$ is the connected component of $\m$ containing $x$. Using Lemma \ref{digonaldistrubution} and symmetry of $x,z$ and $y,w$, one can assume that $\mathcal{C}$ is in the following shadow region; see Fig. \ref{figmainthm12}.
\begin{figure}[htbp]
\centering
\begin{minipage}{\linewidth}
\centering
       \includegraphics[height=0.30\linewidth,width=0.40\linewidth]{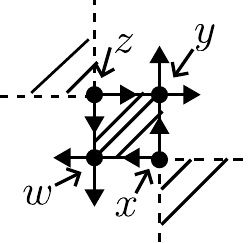}
  \caption{\small
}
 \label{figmainthm12}
\end{minipage}
\end{figure}

One easily gets that the minimal currents for the square $R(x,y,z,w)$; see Fig. \ref{figmainthm12}. Then by $\Delta_1(1_{\m})(x)=\Delta_1(1_{\m})(z)=0,$ we deduce that there are two neighbors $z_1,z_2$ (distinct from $w,y$) of $z$ and two neighbors $x_1,x_2$ (distinct from $w,y$) of $x$.
Applying Lemma \ref{noisolatedpoint} and Corollary \ref{squareloop2} and recalling that $\de\m$ is geodesic, we have two flat boundary geodesic rays $\alpha_1,\alpha_2$ from $z$ through $z_1,z_2$ respectively and two flat boundary geodesic rays $\alpha_3,\alpha_4$ from $x$ through $x_1,x_2$ respectively. Using Lemma \ref{rigidityforonecorner}, it implies that $\mathcal{C}$ is isometric to the subgraph in Fig. \ref{fig2-2}. Similar arguments as in the proof of Claim \ref{nongeodesicconnected} yield that $\m=\mathcal{C}$.


Subcase 3-2 $\de\m$ contains no loops.

Denote by $\deg(p,\de\m)$ the number of neighbors of $p$ in $\de\m$ for any $p\in\m$, and by $\mathcal{C}$ the connected component of $\m$ containing $x$ for some $x\in\de\m$. Recalling that $\de\m$ is non-simple, we may assume that $\deg(x,\de\m)\geq 3.$

Subcase 3-2-1 Assume $\deg(x,\de\m)=4$ and $y,z,v,w$ are neighbors of $x$.

By Lemma \ref{noisolatedpoint}, Lemma \ref{infinitybdy} and the fact that $\de\m$ is geodesic, there are four boundary geodesic rays $\alpha_1,\alpha_2,\alpha_3,\alpha_4$ from $x$ through $y,z,v,w$ respectively. By Corollary \ref{convexclosed2} and recalling that $\de\m$ is geodesic and contains no loops, we obtain that these geodesic rays are all flat or have no corners. The four geodesic rays divide the plane into four quadrant regions. Applying Lemma \ref{rigidityforonecorner} to these four quadrant regions, we have that $\mathcal{C}$ is isometric to the subgraph with $h=0$ in Fig. \ref{fig2-1}. By similar arguments to Claim \ref{nongeodesicconnected}, we get $\mathcal{C}=\m$.

Subcase 3-2-2 Assume $\deg(x,\de\m)=3$ and $\deg(p,\de\m)\leq 3$ for any $p\in\de\m\cap\mathcal{C}$.
Let $y,z,w$ be neighbors of $x$ in $\de\m$, and $y$($z,w$, resp.) be up-vertical(left-horizontal, right-horizontal, resp.) to $x$.

Subcase 3-2-2-1 Assume $\deg(q,\de\m)=3$ for some $(x\neq) q\in\de\m\cap\mathcal{C}$.

We may assume $q\in\alpha_3$ with boundary neighbors $p,s,t$, and $p$ is in the geodesic subpath $\hat\alpha$ between $x$ and $q$. By similar arguments of Subcase 3-2-1, we get two flat boundary geodesic rays $\alpha_1,\alpha_2$ from $x$ through $y,z$ respectively, two flat boundary geodesic rays $\alpha_3,\alpha_4$ from $q$ through $s,t$ respectively, and $s$($t$,resp.) is right-horizontal(down-vertical,resp.) to $q$. Using Corollary \ref{rigidityforfinitecorners} for geodesic boundary line $\alpha_1\cup\alpha_2,\alpha_3\cup\alpha_4,\alpha_1\cup\hat\alpha\cup\alpha_3,\alpha_2\cup\hat\alpha\cup\alpha_4$, we have $\mathcal{C}=\mathcal{C}_1\cup\hat\alpha\cup\mathcal{C}_2,$ where $\mathcal{C}_1$ is the quadrant domain enclosed by $\alpha_1\cup\alpha_2$ and $\mathcal{C}_2$ is the quadrant domain enclosed by $\alpha_3\cup\alpha_4$. This yields that $\hat\alpha$ is of length at most two by Lemma \ref{nolongline}. By similar arguments in Claim \ref{nongeodesicconnected}, we have $\m=\mathcal{C}$ and it is isometric to the subgraph in Fig. \ref{fig2-1} with $1\leq h\leq 2$, or Fig. \ref{fig2-3}.

Subcase 3-2-2-2 Assume $\deg(q,\de\m)=2$ for any $x\neq q\in\de\m$.

By similar arguments in Subcase 3-2-1, we get three boundary geodesic rays $\alpha_1,\alpha_2,\alpha_3$ from $x$ through $y,z,w$ respectively and $\alpha_1$ is flat or vertical. If $\alpha_3$ has corners, then $\alpha_2$ is horizontal by Corollary \ref{convexclosed2} and the facts that $\de\m$ is geodesic and contains no loops. So that we can assume $\alpha_2$ is horizontal by symmetry of $\alpha_2,\alpha_3$. Let $\mathcal{D}_1,\mathcal{D}_2,\mathcal{D}_3$ be regions enclosed by $\alpha_1\cup\alpha_2,\alpha_1\cup\alpha_3,\alpha_2\cup\alpha_3$. We get $\mathcal{C}\cap\mathcal{D}_1=\mathcal{D}_1$ or $\mathcal{C}\cap\mathcal{D}_1=\alpha_1\cup\alpha_2$ by Lemma \ref{rigidityforonecorner}.

We claim $\mathcal{C}\cap\mathcal{D}_3=\mathcal{D}_3$ or $\mathcal{C}\cap\mathcal{D}_3=\alpha_2\cup\alpha_3$. Otherwise, recalling $\deg(q,\de\m)=2$ for any $x\neq q\in\de\m$ and $\de\m$ contains no loops, we have another boundary geodesic line $\beta$ with $\beta\cap(\alpha_1\cup\alpha_2\cup\alpha_3)=\emptyset$. By Lemma \ref{noboundedstrip}, there is a vertical edge $(a,b)$ in $\beta$ such that $a$ is up-vertical to $b$. Then one can find one geodesic ray $\beta_1$ in $\beta$ from $a,$ which doesn't contain $b,$ is above the horizontal line $\hat l$ through $b$. Then we get a contradiction by Corollary \ref{noboundedstrip2} for two disjoint geodesic rays $\alpha_2$ and $\beta_1$, which proves the claim.

Similarly, we have $\mathcal{C}\cap\mathcal{D}_2=\mathcal{D}_2$ or $\mathcal{C}\cap\mathcal{D}_3=\alpha_1\cup\alpha_3$. Note that $\alpha_1,\alpha_2,\alpha_3\subset\de\m.$ Thus one of them is isolated and it contradicts Lemma \ref{nolongline}.

Finally, we prove the result.
\pfd

\section{Geometry of high-dimensional minimal subgraphs}
For $x\in \R^n$ and $r>0,$
denote by $\hat S_r(x)=\{y\in \R^n:\max\limits_{1\leq i\leq n}|y_i-x_i|=r\}$ the $\infty$-normed sphere centered at $x$ of radius $r$. We say $\mathcal I^k\subset\R^n$ is a unit $k$-\emph{cube} if it is a $k$ dimensional cube with all edges of length one.
Similar to the setting for $\Z^2$, we identify the set $\{v_1,v_2,\cdots,v_{2^k}\}$ with some $k$-cube $\mathcal I^k$ if $\{v_1,v_2,\cdots,v_{2^k}\}$ are exactly the vertices of $\mathcal I^k$, where $k=0,1,\cdots,n$. So that any subgraph $\g$ in $\Z^n$ can be identified with a $n-$dimensional cell complex or a geometric space in $\R^n$, called the \emph{geometric realization} of $\g,$ which includes all cells whose vertices are contained in $\g.$ Given any subgraph $\g\subset\R^n$, let $\g^k$ be the $k$-\emph{skeleton} of $\g$(i.e. the union of $k$-cubes of $\g$) for $k=0,1,\cdots,n$.

\prp
For any minimal subgraph $\m$ in $\Z^n$, every connected component of $\m$ is also minimal.
\prpd
\pf
Assume $\m_1$ is one of connected components of $\m$. One can restrict the minimal recurrents of $\m$ to $\m_1$, which yields the result by Corollary \ref{minimalcurrent}.
\pfd

\lm\label{minideg}
Given any minimal subgraph $\m\subset\Z^n$ and any $x\in\m$, then $\deg(x)\geq n$. Moreover, it never occurs that $\deg(x)=\deg(y)=n$ for any $y\sim x$ in $\m$.
\lmd
\pf
If $\deg(x)<n$, then the currents flowing out of $x$ are at least $2n-\deg(x)>n$, but the currents flowing into $x$ are at most $\deg(x)<n.$ This is impossible by Lemma \ref{minimalcurrent}. 

If $\deg(x)=\deg(y)=n$, then applying Lemma \ref{minimalcurrent} to $x,$ one can get that the current from $y$ to $x$ is one. By symmetry of $x$ and $y$, one get the current from $x$ to $y$ is one. This is impossible.
\pfd

\lm\label{infinitybdy}
If $\m\subset\Z^n$ is minimal, then both $\m$ and $\de\m$ are infinite.
\lmd
\pf
If not, then $\de\m\subset B_r$ with some $r\in\N$. Hence $\m\subset B_r$ or $\m^c\subset B_r$. One can get a new graph $\m^\prime$ by removing $B_r$ from $\m$ or adding $B_r$ to $\m$. Comparing $\m$ with $\m^\prime$, one easily sees that $\m$ is not minimal.
\pfd

\prp\label{uppergrowth}
If $\m\subset\Z^n$ is minimal, then we have $|\de \m\cap\hat B_r(x)|\leq c(n)r^{n-1}$, where $c(n)$ is a constant depending only on $n$.
\prpd
\pf
One can get a new graph $\m^\prime$ by removing $\hat B_r(x)$ from $\m$ for any $x\in \m$ and $r\in\N$. Comparing $|\partial\m\cap E_{\hat B_r(x)}|$ with $|\partial\m^\prime\cap E_{\hat B_r(x)}|$ and using the minimality of $\m$, one can get
\begin{align*}
c(n)|\de \m\cap\hat B_r(x)|&\leq|\partial\m\cap E_{\hat B_r(x)}|
\\&\leq|\partial\m^\prime\cap E_{\hat B_r(x)}|
\\&\leq 2n|\de \hat B_r(x)\cap\m|
\\&\leq 4n^2(2r+1)^{n-1}.
\end{align*}

Thus we complete the proof.
\pfd

Now we are ready to prove Theorem \ref{redution3dim}.
\pf[Proof of Theorem \ref{redution3dim}]
It suffices to show the restriction of some minimal currents $I$ of $\m$ to $\m^3\cup\m^2,$ denoted by $\hat I,$ are well defined, and then they are minimal currents, i.e. the sum of the restrictive currents $\hat I$ for any vertex of $\Z^n$ is zero; see Lemma \ref{minimalcurrent}. Note that $\m^1$ may be nonempty. So that it suffices to show the currents $I$ on $\m^3\cup\m^2\cap\m^1$ agree with those induced naturally by $\m^3\cup\m^2$. Precisely speaking, the minimal current on any $(x,y)$ agrees with that induced by $\m^3\cup\m^2$, where $x\sim y$ for $x\in\m^3\cup\m^2$ and $y\in\m-\m^3\cup\m^2$. This yields that $\deg(x)\geq 3,\deg(y)\geq 3$ by Lemma \ref{minideg}.

If $\deg(y)\geq 5$, then $y\in\m^3\cup\m^2.$ If $\deg(y)=3$, then the result is true.

Thus we only need to consider the case $\deg(y)=4.$ Note that if $\deg(x)\geq 5$, one easily deduces that there exists a square in $\m$ containing the edge $(x,y)$. Hence $y\in\m^3\cup\m^2.$ We have the following two cases.

Case 1. $\deg(x)=3,\deg(y)=4$.

Subcase 1-1. The neighbors of $x$ and $y$ in $\m$ are as in Fig. \ref{3dim3-4-1}. Using Lemma \ref{minimalcurrent} to $x$ and $y$, we have that the currents on the oriented edges $(y,x),(p,y),(u,y),(v,y)$ are all one, indicated by arrows in Fig. \ref{3dim3-4-1}. Applying Lemma \ref{minimalcurrent} to $p$, we obtain that the current on $(w,z)$ is one, and hence $w\in\m$. By symmetry, we get $s,t\in\m$. Then the currents on the oriented edges $(t,z),(y,z),(t,z),(w,z)$ are all one and the sum of the currents flowing into $z\geq 4-2=2>0$. This is a contradiction.

Subcase 1-2. The neighbors of $x$ and $y$ in $\m$ are as in Fig. \ref{3dim3-4-2}. Using Lemma \ref{minimalcurrent} to $x$ and $y$, we have that the currents on the oriented edges $(y,x),(z,y)$ are both one. Since $y\in\m^1$, $u,v,w\notin\m$ and the sum of the currents flowing out of $z\geq 4-2=2>0$. This is a contradiction.

Subcase 1-3. The neighbors of $x$ and $y$ in $\m$ are as in Fig. \ref{3dim3-4-3}. Using Lemma \ref{minimalcurrent} to $x$ and $y$, we have that the currents on the oriented edges $(y,x),(u,y),(v,y)$ are all one, indicated by arrows in Fig. \ref{3dim3-4-3}. Using Lemma \ref{minimalcurrent} for $u,v$, one can get the currents on the oriented edges $(s,u),(t,v)$ are both one, and hence $s,t\in\m.$ Then the currents on the oriented edges $(w,p),(y,p),(s,p),(t,p)$ are all one. Thus the sum of the currents flowing in of $p\geq 4-2=2>0$. This is a contradiction.

Subcase 1-4. The neighbors of $x$ and $y$ in $\m$ are as in Fig. \ref{3dim3-4-4} or Fig. \ref{3dim3-4-5}. Using Lemma \ref{minimalcurrent} to $x$ and $y$, we deduce that the currents on oriented edges must be indicated by arrows in Fig. \ref{3dim3-4-4} or Fig. \ref{3dim3-4-5}. Noting that $y\notin\m^3\cup\m^2$, we have that the sum of currents flowing out of $z\geq 4-2=2>0$. This is a contradiction.

\begin{figure}
\begin{minipage}{0.49\linewidth}
\centering
       \includegraphics[height=0.6\linewidth,width=0.5\linewidth]{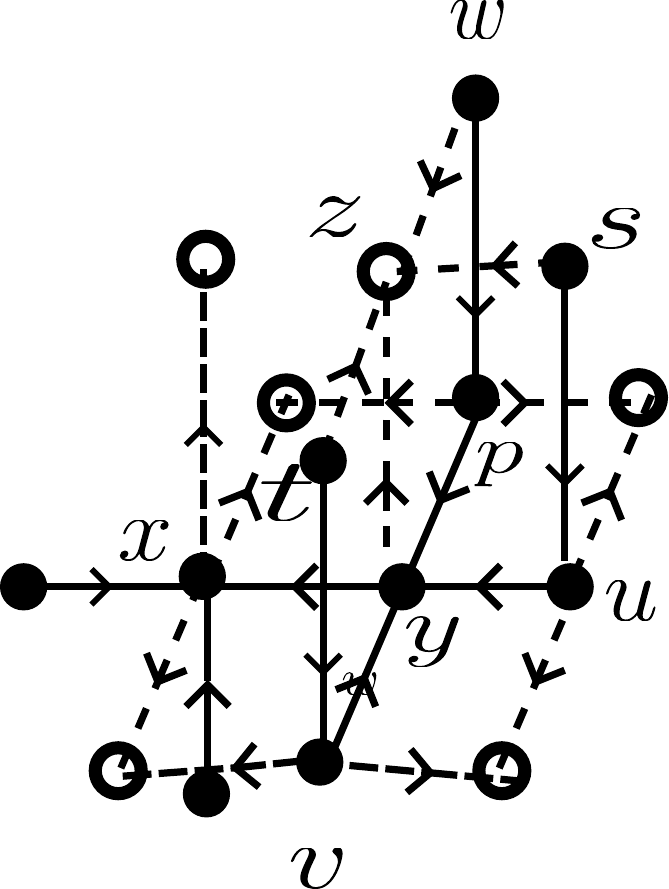}
  \caption{
  \small
 }
 \label{3dim3-4-1}
\end{minipage}
\begin{minipage}{0.49\linewidth}
\centering
       \includegraphics[height=0.6\linewidth,width=0.5\linewidth]{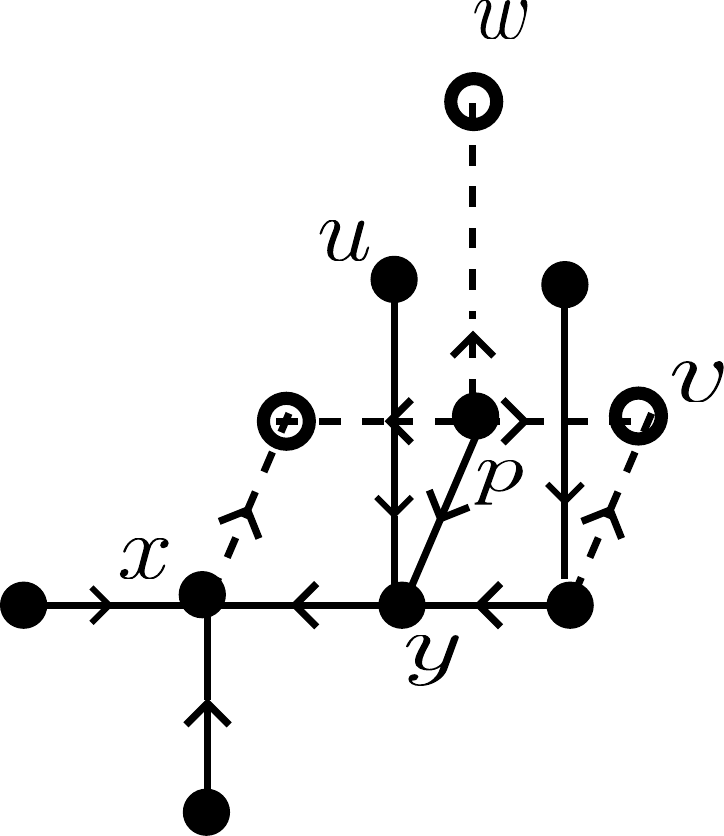}
  \caption{
  \small
}
 \label{3dim3-4-2}
\end{minipage}
\end{figure}

\begin{figure}
\begin{minipage}{0.49\linewidth}
\centering
       \includegraphics[height=0.6\linewidth,width=0.5\linewidth]{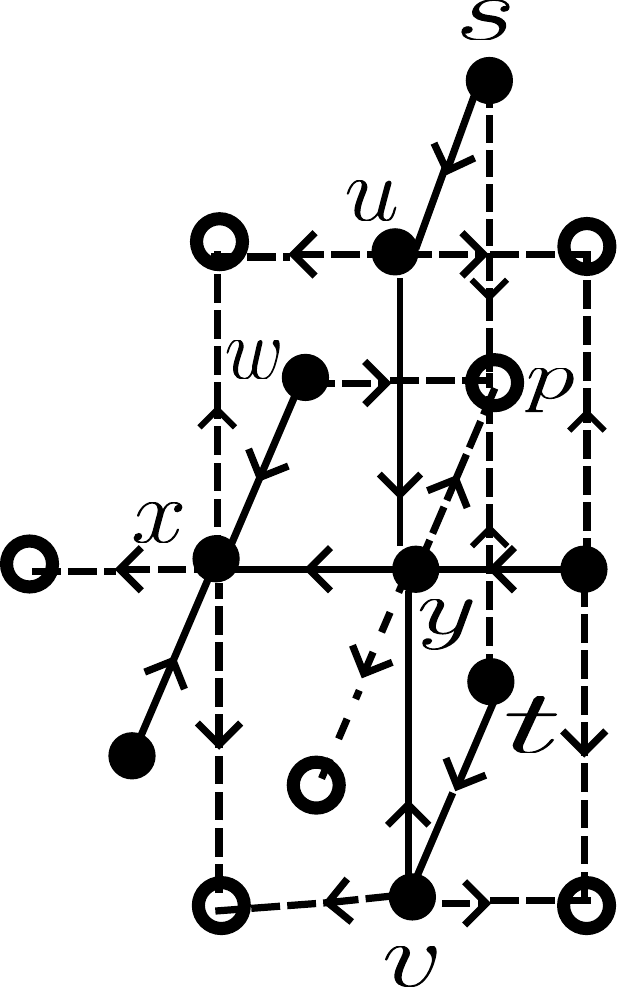}
  \caption{
  \small
 }
 \label{3dim3-4-3}
\end{minipage}
\end{figure}

\begin{figure}
\centering
\begin{minipage}{0.49\linewidth}
\centering
       \includegraphics[height=0.6\linewidth,width=0.5\linewidth]{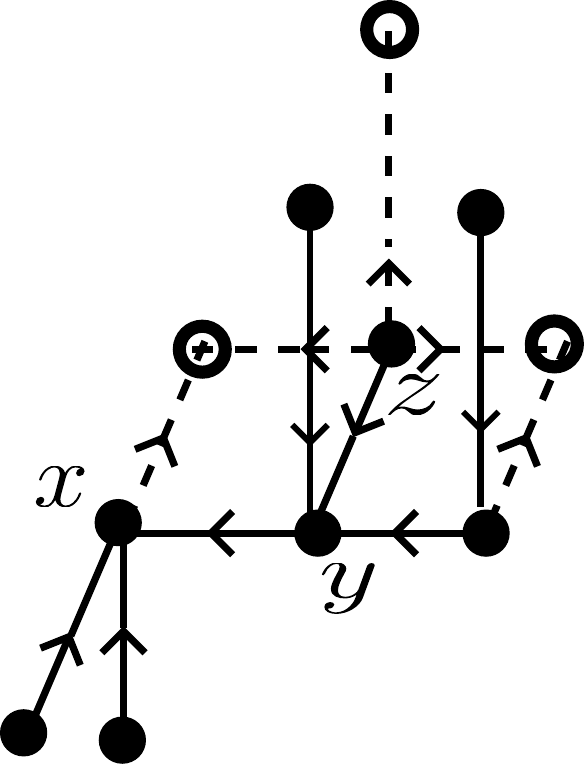}
  \caption{
  \small
 }
 \label{3dim3-4-4}
\end{minipage}
\centering
\begin{minipage}{0.49\linewidth}
\centering
       \includegraphics[height=0.6\linewidth,width=0.5\linewidth]{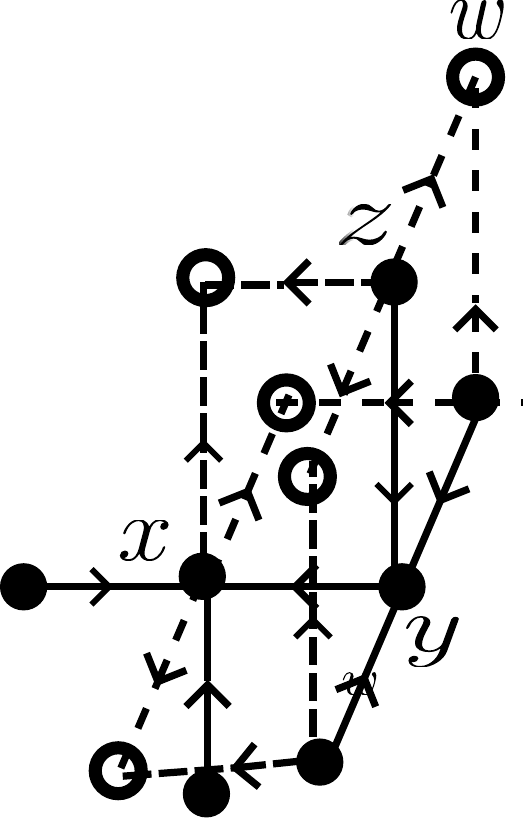}
  \caption{
  \small
}
 \label{3dim3-4-5}
\end{minipage}

\end{figure}

Case 2. $\deg(x)=\deg(y)=4$.

\begin{figure}
\centering
\begin{minipage}{0.49\linewidth}
\centering
       \includegraphics[height=0.6\linewidth,width=0.4\linewidth]{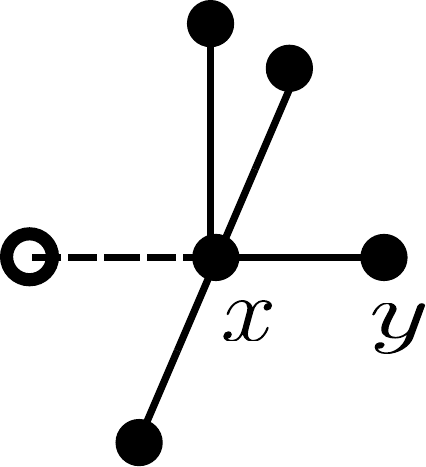}
  \caption{
  \small
 }
 \label{4dim3-4-1}
\end{minipage}
\centering
\begin{minipage}{0.49\linewidth}
\centering
       \includegraphics[height=0.6\linewidth,width=0.5\linewidth]{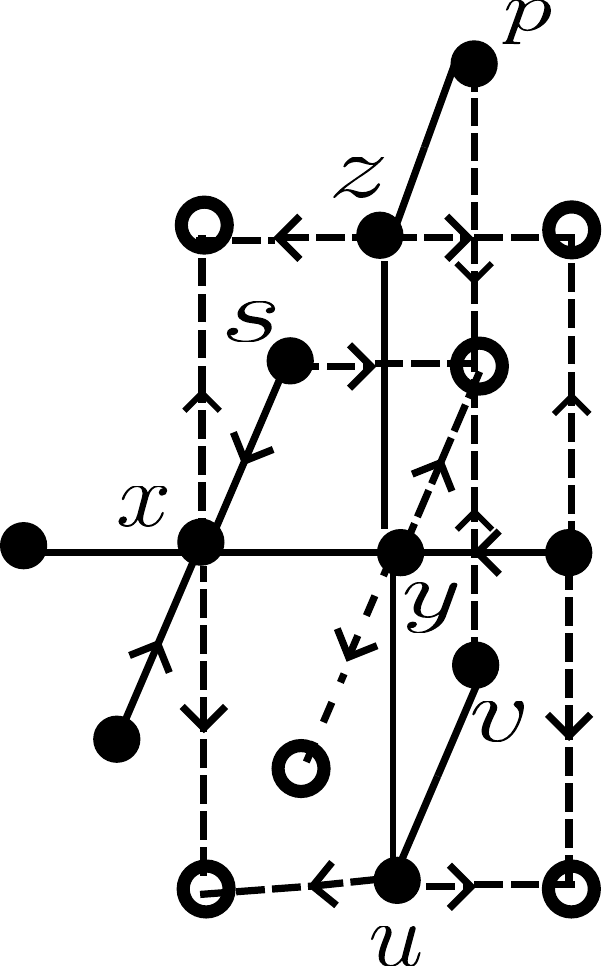}
  \caption{
  \small
}
 \label{4dim3-4-2}
\end{minipage}
\centering
\begin{minipage}{0.49\linewidth}
\centering
       \includegraphics[height=0.6\linewidth,width=0.5\linewidth]{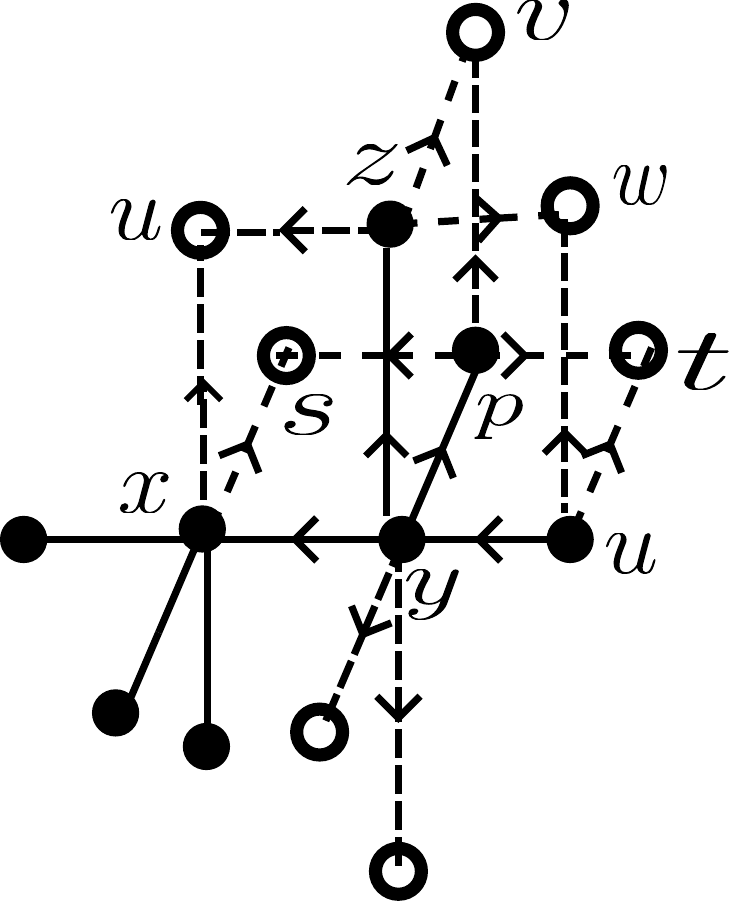}
  \caption{
  \small
  }
 \label{4dim3-4-3}
\end{minipage}



\end{figure}

Subcase 2-1. The neighbors of $x$ in $\m$ are as in Fig. \ref{4dim3-4-1}. Then it is obvious that $y\in\m^3\cup\m^2,$ which is impossible.

Subcase 2-2. The neighbors of $x$ and $y$ in $\m$ are as in Fig. \ref{4dim3-4-2}. If $p\notin\m$, then using Lemma \ref{minimalcurrent} to $z$, we have that the current on the oriented edge $(y,z)$ is one. So that we get the current on the oriented edge $(x,y)$ is one by applying Lemma \ref{minimalcurrent} to $y$, which agrees with that induced by $\m^3\cup\m^2$, and we finish the proof in this setting. By symmetry of $z,w,u$, the remaining setting in this subcase is that $p,q,v\in\m$. In this case, by direct computation the currents flowing into $t\geq 4-2=2>0$, which is impossible.

Subcase 2-3. The neighbors of $x$ and $y$ in $\m$ are as in Fig. \ref{4dim3-4-3}. It follows that $u,v,w\notin\m$ by $y\in\m^1$. By Lemma \ref{minimalcurrent} for $z$, we have that the current on the oriented edge $(y,z)$ is one. Recalling that $\deg(y)=4$ and using Lemma \ref{minimalcurrent} to $y$, we deduce that the current on the oriented edge $(x,y)$ is one, which agrees with that induced by $\m^3\cup\m^2$. Hence we finish the proof in this subcase.
\pfd

\lm\label{ncell}
For any minimal subgraph $\m\subset\Z^n$ and any point $x\in \m$, $\m^n\neq\emptyset$ and there exists a positive constant $c_1=c_1(n),$ depending only on $n,$ such that
$$d(x,\m^n)\leq c_1.$$
\lmd

\pf
For any $x\in\m$, let $r$ be a positive integer with $\hat B_r(x)\cap\m^n=\emptyset.$ Then $\hat B_r(x)\cap\m\subset\de\m$. We can get a new graph $\m^\prime$ by removing $\hat B_{r-1}(x)\cap\m$ from $\m$.

Comparing $E_{\hat B_r(x)}\cap\partial\m$ with $E_{\hat B_r(x)}\cap\partial\m^\prime$, one can deduce that by the minimality of $\m$
\begin{align*}
2n|\de \hat B_r(x)\cap\m|&=2n(|\hat B_{r}(x)\cap\m|-|\hat B_{r-1}(x)\cap\m|)
\\&\geq|E_{\hat B_r(x)}\cap\partial\m^\prime|
\\&\geq|E_{\hat B_r(x)}\cap\partial\m^|
\\&\geq|\hat B_{r-1}(x)\cap\m|.
\end{align*}

This gives $$|\hat B_{r}(x)\cap\m|\geq (1+\frac{1}{2n})|\hat B_{r-1}(x)\cap\m|.$$

Therefore, we obtain that $$|\hat B_{r}(x)\cap\m|\geq(1+\frac{1}{2n})^r.$$

On the other hand, since $\m\subset\Z^n,$ $$|\hat B_{r}(x)\cap\m|\leq (1+2r)^n.$$ Hence we have $$r\leq c_1(n),$$ for some positive constant $c_1(n)$ depending only on $n.$

\pfd

\co\label{largencell}
Given any minimal subgraph $\m\subset\Z^n$ and any point $x\in \m$, let $\mathcal{C}(\m^n\cap \hat B_r(x))$ be the set of connected components of $\m^n\cap \hat B_r(x)$. Then $$\liminf\limits_{r\rightarrow \infty,A\in \mathcal{C}(\m^n\cap \hat B_r(x))}\dfrac{|\de A|}{|A|}=0.$$
In particular, $$\limsup\limits_{r\rightarrow \infty,A\in \mathcal{C}(\m^n\cap \hat B_r(x))}|A|=\infty.$$
\cod

\pf
The idea of the proof is similar to that of Lemma \ref{ncell}.

We argue by contradiction.
Then this yields $$\dfrac{|\de A|}{|A|}\geq c(n)>0,$$ for any $A\in \mathcal{C}(\m^n\cap \hat B_r(x))$, any $r\in \N$ and some positive constant $c(n)$ depending only on $n$. Hence we have
\begin{align}
c(n)|\m\cap\hat B_r(x)|\leq|\de\m\cap\hat B_r(x)|.\label{largencell1}
\end{align}

We can get a new graph $\m^\prime$ by removing $\hat B_{r-1}(x)\cap\m$ from $\m$. Comparing $\m$ with $\m^\prime$, using (\ref{largencell1}) and applying the minimality of $\m$, we have that
\begin{align*}
0&\geq|E_{\hat B_r(x)}\cap\partial\m|-|E_{\hat B_r(x)}\cap\partial\m^\prime|
\\&\geq |\de\m\cap\hat B_{r-1}(x)|-2n|\m\cap\de\hat B_{r}(x)|,
\end{align*}
and hence
\begin{align}
2n|\m\cap\de\hat B_r(x)|\geq|\de\m\cap\hat B_{r-1}(x)|\geq c(n)|\m\cap\hat B_{r-1}(x)|.\label{largencell2}
\end{align}
Note that $$|\m\cap\de\hat B_{r}(x)|=|\m\cap\hat B_{r}(x)|-|\m\cap\hat B_{r-1}(x)|,$$
which yields $$|\m\cap\hat B_r(x)|\geq (1+\frac{c(n)}{2n})^r,$$ for any $r\in\N$. But recall that $$|\m\cap\hat B_r(x)|\leq (2r+1)^n.$$ This is impossible for sufficiently large $r$.
\pfd

\lm\label{boundaryncell}
For any minimal subgraph $\m\subset\Z^n$ and any point $x\in \m$,
we have $$|(\m-\m^n)\cap \hat B_r(x)|\leq 2n|\m^n\cap N_1(\m-\m^{n})\cap\hat B_r(x)|,$$ where $N_1(\m-\m^{n}):=\{x\in\Z^n|d(x,\m-\m^{n})\leq 1\}.$
\lmd
\pf
Observe the new graph $\m^\prime$ obtained by removing $\m-\m^n$ from $\m\cap\hat B_r(x)$ and note that $\m-\m^n\subset\de\m$. Comparing $E_{\hat B_r(x)}\cap\partial\m^\prime$ with $E_{\hat B_r(x)}\cap\partial\m$ and recalling that $\m$ is minimal, we get that
\begin{align*}
|(\m-\m^n)\cap \hat B_r(x)|
&\leq |E_{\hat B_r(x)}\cap\partial\m|
\\&\leq|E_{\hat B_r(x)}\cap\partial\m^\prime|
\\&\leq 2n|\m^n\cap N_1(\m-\m^{n})\cap\hat B_r(x)|.
\end{align*}
\pfd

Next, we prove Theorem \ref{ncellroughlyisometry} and Theorem \ref{noboundedplane}.
\pf[Proof of Theorem \ref{ncellroughlyisometry}]
The natural embedding $\m^n\subset\m$ is surely a rough isometry by Lemma \ref{ncell}. Recall that $\deg(x)\leq 2n$, $|\hat B_{c_1}(x)|\leq (2c_1+1)^n$, and one has $|\hat B_{c_1}(x)\cap\m^n|\geq 1$ for any given $x\in\m$ by Lemma \ref{ncell}. One can show the inequality (\ref{ncellroughlyisometry1}) in Theorem \ref{ncellroughlyisometry} for sufficiently large $r$.

It is obvious that
$\m^n\cap N_1(\m-\m^{n})\subset\de\m^n$.

Hence we have
\begin{align*}
|(\m-\m^n)\cap \hat B_r(x)|&\leq 2n|\m^n\cap N_1(\m-\m^{n})\cap\hat B_r(x)|
\\&\leq 2n|\de \m^n\cap\hat B_r(x)|.
\end{align*}

By $\de\m=\de\m^n\cup(\m-\m^n)$, direct computation yields that $$\dfrac{|\de\m^{n}\cap\hat B_r(x)|}{|\de \m\cap\hat B_r(x)|}\geq \dfrac{1}{1+2n}.$$
\pfd

\pf[Proof of Theorem \ref{noboundedplane}]
Suppose it is not true, let $P$ be one of two parallel hyperplanes and $\bar\m$ be the usual distance projection image from $\m$ to $P$. Assume the distance of two parallel hyperplanes is $l$ and $\m^\prime$ is the new graph obtained by removing $\hat B_{r-1}(x)$ from $\m$ for any given $x\in \m$. Then one can show that
\begin{align}
&2|\bar\m\cap \hat B_r(x)|\leq |\de\m\cap \hat B_r(x)|\leq l|\bar\m\cap \hat B_r(x)|.\label{boundedbdy1}
\\&|\bar\m\cap \de\hat B_r(x)|\leq |\m\cap\de \hat B_r(x)|\leq l|\bar\m\cap\de \hat B_r(x)|.\label{boundedbdy2}
\end{align}

On the other hand, using (\ref{boundedbdy1}), (\ref{boundedbdy2}) and the minimality of $\m$, we have
\begin{align*}
0&\geq |E_{\hat B_r(x)}\cap\partial\m|-|E_{\hat B_r(x)}\cap\partial\m^\prime)|
\\&\geq 2|\bar\m\cap \hat B_{r-1}(x)|-2n|\m\cap\de \hat B_r(x)|
\\&\geq 2|\bar\m\cap \hat B_{r-1}(x)|-2nl|\bar\m\cap\de \hat B_r(x)|,\label{boundedbdy3}
\end{align*}
which yields that $$|\bar\m\cap \hat B_r(x)|\geq (1+\frac{1}{nl})^r.$$
But since $\bar\m\subset \Z^{n-1},$ $$|\bar\m\cap \hat B_r(x)|\leq (2r+1)^{n-1}.$$ This is a contradiction when $r$ is sufficiently large.
\pfd

{Before introducing a maximum principle property of minimal subgraphs, we adopt some notations.}

Let $p_i:\Z^n\longrightarrow\Z^{n-1}$ be the $i$-th projection for any integer $1\leq i\leq n$,
where
$$p_i(x_1,x_2,\cdots,x_n)=(x_1,x_2,\cdots,x_{i-1},x_{i+1},\cdots,x_n)$$ for any $x=(x_1,x_2,\cdots,x_n)\in\Z^n.$

For any subgraph $\m\su\Z^n$ and any $\hat x=(x_1,x_2,\cdots,x_{n-1})\in p_i(\m)$, we define $$H_i(\hat x,\m):=\sup\limits_{y=(y_1,\cdots,y_n)\in\m,\ p_i(y)=\hat x}y_i$$
and
$$h_i(x,\m):=\inf\limits_{y=(y_1,\cdots,y_n)\in\m,\ p_i(y)=\hat x}y_i.$$
Note that $H_i(\hat x,\m),h_i(\hat x,\m)$ may be infinite.

\prp\label{maximumprinciple}
Suppose that $\m\su\Z^n$ is minimal and given two finite subsets $\Omega_1,\Omega_2:=\{x\in\Z^{n-1}|d(x,\Omega_1)\leq 1\}\su p_i(\m)\su\Z^{n-1}$, such that the geometric realization of $\Omega_1$ in $\R^{n-1}$ is the closure of a bounded open domain. Then neither
\begin{equation}
\max\limits_{\hat x\in\Omega_2-\Omega_1} H_i(\hat x,\m)<H:=\max\limits_{\hat x\in\Omega_1} H_i(\hat x,\m)<+\infty,\label{max}
\end{equation}
nor
\begin{align}
\min\limits_{\hat x\in\Omega_2-\Omega_1} h_i(\hat x,\m)>h:=\min\limits_{\hat x\in\Omega_1} h_i(\hat x,\m)>-\infty.\label{min}
\end{align}
holds for any integer $1\leq i\leq n$.
\prpd

\pf
It suffices to prove (\ref{max}) fails for $i=1$. We argue by contradiction.

Consider the nonempty set $\Omega:=\{\hat x\in\Omega_2:H(\hat x,\m)=H\}$ and the set $\tilde\Omega:=\{x\in\Z^n:p_1(x)\in\Omega,x_1=H\}\su\{H\}\times\Z^{n-1}$. Note that $\Omega\su\Omega_1$ and $\tilde\Omega\su\m$, by the assumption (\ref{max}). Now one can obtain a new graph $\m^\prime$ from $\m$ by deleting the finite subset $\tilde\Omega$.

Using (\ref{max}) again, one easily deduces that $$|E_B\cap\partial\m)|-|E_B\cap\partial\m^\prime|\geq |\hat\partial\tilde\Omega|>0,$$ where $B$ is any finite ball containing $\tilde\Omega$ and $\hat\partial\tilde\Omega$ is the boundary edge of $\tilde\Omega$ in $\{H\}\times\Z^{n-1}$. This is impossible by the minimality of $\m$.


\pfd

\section{Open problems}
In this section, let $\m$ always be a minimal subgraph in $\Z^n$. We propose some open problems on geometry and topology of high-dimensional minimal subgraphs.

The first problem is the stronger version of Theorem \ref{ncellroughlyisometry}.
\pr\label{probelm1}
Are the following true:
\begin{align*}
\lim\limits_{r\rightarrow\infty}\dfrac{|\m^n\cap\hat B_r(x)|}{|\m\cap\hat B_r(x)|}=1 \quad and\quad
\lim\limits_{r\rightarrow\infty}\dfrac{|\de\m^n\cap\hat B_r(x)|}{|\de\m\cap\hat B_r(x)|}=1?
\end{align*}
\prd

The second problem is about the number of connected components and some ``big" connected component of any minimal subgraph.
\pr\label{problem2}
Do $\m^n$ and $\m$ have only finitely many connected components? Does at least one of these connected components contains a subgraph isometric to $\underbrace{\Z_{\geq0}\times\cdots\times\Z_{\geq0}}_{n\ \emph {times}}?$
\prd

Motivated by positive density at infinity of any minimal hypersurface in $\R^n$, it is natural to ask the growth rates of any minimal subgraph and its boundary.
\pr\label{problem3}
Whether do there exist positive constants $c_2=c_2(n),c_3=c_3(n),c_4=c_4(n)\ and\ c_5=c_5(n)$ depending only on $n$ such that
\begin{align*}
c_2r^n\leq|\m\cap\hat B_r(x)|\leq c_3r^n\ \ and\ \
c_4r^{n-1}\leq|\de\m\cap\hat B_r(x)|\leq c_5r^{n-1}?
\end{align*}
\prd
\rmk
$\m$ can be replaced by $\m^n$ by Lemma \ref{ncell} and Lemma \ref{boundaryncell}.
\rmkd
Note that the monotonicity formula of any minimal subgraph or its boundary does not hold in general; see Figure \ref{fig2-1}.
\ Proposition \ref{uppergrowth} gives the upper bounds in Problem \ref{problem3}. So that it suffices to consider the lower bounds in Problem \ref{problem3}.

The last question is as follows.
\pr\label{problem4}
For a minimal subgraph $\m\subset\Z^n$, do bounded connected components of $\m^n$ exist\emph{?} How to characterize the bounded connected components of $\m^n$ if they exist\emph{?} 
\prd

\textbf{Acknowledgements.} We thank Florentin M\"unch for helpful discussions and suggestions on the 1-Laplaican and the definition of area-minimizing subgraphs in $\Z^n.$ B.H. is supported by NSFC, no. 12371056 and Shanghai Science and Technology Program [Project No. 22JC1400100]. Z.H. is supported by NSFC, no.12301094 and Guangzhou Basic and Applied Basic Research Foundation No. 2024A04J3483.

\textbf{Data Available Statement.} The data used to support the findings of this study are available from the corresponding author upon request.





\bibliographystyle{alpha}
\bibliography{1-Laplace}

\end{document}